\documentclass[10pt,a4paper]{article}

\usepackage{amsmath,amssymb, bbm}
\usepackage{color}
\usepackage{float}
\newcommand{\R}{\mathbb{R}}

\newcommand{\Z}{\mathbb{Z}}

\newcommand{\Co}{\mathbb{C}_0}
\newcommand{\Do}{\mathbb{D}_0}
\newcommand{\Eo}{\mathbb{E}_0}
\newcommand{\Fo}{\mathbb{F}_0}
\newcommand{\Ko}{\mathbb{K}_0}
\newcommand{\bP}{\mathbb{P}}

\def\cS{{\mathcal S}}

\newcommand{\ee}{\varepsilon}
\renewcommand{\aa}{\alpha}
\newcommand{\bb}{\beta}

\renewcommand{\div}{{\rm div}\,}

\newcommand{\Sum}{\displaystyle \sum}
\newcommand{\Hs}{\dot{H^s}}

\def\d{\partial}
\def\ddj{\dot \Delta_j}
\def\ddq{\dot \Delta_q}

\def\tilde{\widetilde}

\newcommand{\D}{\Delta}

\newcommand{\la}{\lambda}
\newcommand{\lb}{\lambda'}
\newcommand{\lo}{\lambda_0}
\newcommand{\n}{\nabla}

\newcommand{\Ge}{G_{ext}}
\newcommand{\Fe}{F_{ext}}

\newcommand{\we}{w_\ee}

\newcommand{\foe}{f_{0,\ee}}
\newcommand{\ub}{\bar{u}}

\newcommand{\Ue}{U_\ee}
\newcommand{\ue}{u_\ee}
\newcommand{\be}{b_\ee}
\newcommand{\ve}{v_\ee}

\newcommand{\ce}{c_\ee}
\newcommand{\de}{d_\ee}
\newcommand{\dde}{\delta_{\ee}}

\newcommand{\uoe}{u_{0,\ee}}
\newcommand{\voe}{v_{0,\ee}}
\newcommand{\boe}{b_{0,\ee}}
\newcommand{\coe}{c_{0,\ee}}

\newcommand{\De}{D_\ee}

\newcommand{\tDe}{\tilde{D}_\ee}

\newcommand{\pe}{p_\ee}

\newcommand{\tq}{\tilde{q}}

\newcommand{\tu}{\tilde{u}}
\newcommand{\tb}{\tilde{b}}
\newcommand{\Te}{T_\ee^*}
\newcommand{\tuo}{\tilde{u}_{0}}
\newcommand{\tbo}{\tilde{b}_{0}}

\newcommand{\qe}{q_{\ee}}

\newcommand{\We}{W_\ee}
\newcommand{\Wer}{W_\ee^{RF}}

\renewcommand{\Re}{R_\ee}
\newcommand{\re}{r_\ee}

\newtheorem{thm}{Theorem}[section]
\newtheorem{lem}[thm]{Lemma}

\newtheorem{prop}[thm]{Proposition}
\newtheorem{defi}[thm]{Definition}

\newtheorem{rem}[thm]{Remark}

\usepackage{graphicx}
\usepackage{tikz}
\usepackage{scalerel}
\usepackage{pict2e}
\usepackage{tkz-euclide}
\usetikzlibrary{calc}
\usetikzlibrary{patterns,arrows.meta}
\usetikzlibrary{shadows}
\usetikzlibrary{external}
\usepackage{pgfplots}
\pgfplotsset{compat=newest}
\usepgfplotslibrary{statistics}
\usepgfplotslibrary{fillbetween}
\usepackage{xcolor}
\usepackage{nicefrac}
\usepackage{pgfplots}

\title{Improved convergence rates and larger initial data for the 3D-2D asymptotics of the rotating magnetohydrodynamic and rotating fluids systems.}

\author{Fr\'ed\'eric Charve\footnote{Univ Paris Est Creteil, Univ Gustave Eiffel, CNRS, LAMA UMR8050, F-94010 Creteil, France. E-mail: frederic.charve@u-pec.fr}}

\date{}
\begin{document}
%\tableofcontents

\maketitle

\begin{abstract} We recently proved that if their initial velocity and magnetic field both are the sum of a classical 3D-part and some 2D-part (i.-e. depending only on the horizontal space variables), the solutions of the 3D-rotating magnetohydrodynamic (MHD) system converge (when the Rossby number goes to zero) towards those of a 2D-MHD system \emph{with six components} and an additional 3D magnetic field transported by the 2D limit velocity. We also provided explicit convergence rates for large ill-prepared initial data.

In this article, thanks to new estimates, we improve the convergence rates and allow larger initial data. Finally these improvements are adapted to the rotating fluids system.
\end{abstract}
\textbf{MSC: } 35B40, 35Q35, 76D03, 76U05, 76W05\\
\textbf{Keywords: }Magnetohydrodynamics, rotating fluids, Strichartz estimates, Besov and Sobolev spaces.

\section{Introduction}

We consider the following rotating MHD system
\begin{equation}
	\left\{
	\begin{aligned}
		&\d_t \ue -\nu \D \ue +\frac{1}{\ee} \ue\wedge e_3 +\ue\cdot \n \ue -\be\cdot \n \be=-\n \pe,
		\\
		&\d_t \be -\nu' \D \be +\ue\cdot \n \be -\be\cdot \n \ue=0,
		\\
		&\div \ue=0,
		\\
		&{(\ue,\be)}_{|t=0}=(\uoe, \boe).
	\end{aligned}
	\label{MHD}
	\tag{$MHD_\ee$}
	\right.
\end{equation}
We recall that the unknowns are $\Ue =(\ue, \be)=(\ue^1, \ue^2, \ue^3, \be^1, \be^2, \be^3)$, where $\ue$ denotes the velocity of the fluid, $\be$ the magnetic field, and $\pe$ the pressure. The diffusion coefficients $\nu,\nu'$ are positive ($\nu$ is the kinematic viscosity and $\nu'$ the magnetic diffusivity). We refer the reader to \cite{FCVSN2} for a presentation of this model as well as references for a physical background and an overview of mathematical works (in particular we refer to the seminal article \cite{DDG1999} of B. Desjardins, E. Dormy and E. Grenier).

Several articles obtained partial limits for this system or one of its variants. In \cite{FCVSN2} we proved that to reach a more complete limit, we need to consider the following general type of initial data:
\begin{equation}
{(\ue,\be)}_{|t=0}=(\tuo(x_h) +\voe(x), \tbo(x_h) +\coe(x)).
 \label{Initdata}
\end{equation}
In that case, we obtained that the limit system is the following 2D MHD system (only depending on the horizontal variables) \emph{with six components} (the first two components of the velocity and magnetic field satisfy the classical 2D-MHD system, while the third components solve simple linear equations):
\begin{equation}
\begin{cases}
\d_t \tu -\nu \D_h \tu +\tu^h\cdot \n_h \tu -\tb^h\cdot \n_h \tb=-\left(\begin{array}{c}\n_h \tq^0 \\ 0 \end{array}\right),\\
\d_t \tb -\nu' \D_h \tb +\tu^h\cdot \n_h \tb -\tb_h\cdot \n_h \tu=0,\\
\div_h \tu=0,\\
{(\tu,\tb)}_{|t=0}=(\tuo, \tbo),
\end{cases}
\label{MHD2D}
\tag{$2D-MHD^3$}
\end{equation}
completed by what we interpret as a complementary magnetic field transported by the previous 2D limit velocity $\tu$:
\begin{equation}
\begin{cases}
\d_t \ce -\nu' \D \ce +\tu\cdot \n \ce -\ce\cdot \n \tu=0,\\
c_{\ee|t=0}=\coe.
\end{cases}
\label{Mag}
\tag{$3D-M_\ee$}
\end{equation}
\begin{rem}
 \sl{As $\coe$ and $\tu$ are divergence-free, so is $\ce$ (we refer to Remark 1.1 in \cite{FCVSN2}).}
 \label{Rem-div}
\end{rem}
In \cite{FCVSN2} this result is stated in Theorems 1.2 (for weak solutions), 1.3 and 5.2 (for strong solutions). It involves the same classical functional spaces as the classical Leray/Fujita-Kato theories of weak/strong solutions for the incompressible Navier-Stokes system: for $s\in \R$, $d\in \{2,3\}$ and $T>0$ let us set:
$$
 \dot{E}_T^s(\R^3)=\Big[\mathcal{C}_T(\Hs (\R^3)) \cap L_T^2(\dot{H}^{s+1}(\R^3))\Big]^3, \quad \dot{F}_T^s(\R^d)=\Big[\mathcal{C}_T(\Hs (\R^d)) \cap L_T^2(\dot{H}^{s+1}(\R^d))\Big]^6,
$$
and the corresponding norms:
\begin{equation}
\begin{cases}
\|u\|_{\dot{E}_T^s}^2 \overset{def}{=}\|u\|_{L_T^\infty \Hs }^2 +\nu \int_0^T \|u(\tau)\|_{\dot{H}^{s+1}}^2 d\tau, \\
\|(u,b)\|_{\dot{F}_T^s}^2 \overset{def}{=}\|u\|_{L_T^\infty \Hs }^2 +\|b\|_{L_T^\infty \Hs }^2 +\nu \int_0^T \|u(\tau)\|_{\dot{H}^{s+1}}^2 d\tau +\nu' \int_0^T \|b(\tau)\|_{\dot{H}^{s+1}}^2 d\tau.
\end{cases}
\end{equation}
We recall that $H^s(\R^d)$ and $\dot{H}^s(\R^d)$ respectively denote the usual  inhomogeneous and homogeneous Sobolev spaces of index $s\in \R$. When $T=\infty$ we will write $\dot{E}^s$ or $\dot{F}^s$ and the corresponding norms will be naturally understood as taken over $\R_+$ in time.

Before presenting our results, we want to cite a very interesting article we were recently informed about: in \cite{OhYo} H. Ohyama and K. Yoneda focus on the same convergence but in $\R^2\times \mathbb{T}$ and in the context of mild solutions (using fixed-point method in the spirit of \cite{KLT, T2, KimJ, TY}). For constant initial data in the critical spaces (without additional regularity), they prove that for any $p\in]2,\infty[$
$$
\|\Ue-(\tu,\tb+c)\|_{L^p L^\frac2{1-\frac2{p}}(\R^2\times \mathbb{T})} \underset{\ee \rightarrow 0}{\longrightarrow} 0,
$$
where $c$ denotes the solution $\ce$ of \eqref{Mag} for an initial data independant of $\ee$. Note that they do not provide convergence rates in terms of the Rossby number (as their data are in critical spaces).

In the recent work \cite{FCZY} (with Z. Yao) we focussed on the asymptotics in the case of a MHD model with vanishing kinematic viscosity (i.-e. $(\nu,\nu')=(\ee^\aa,1)$) for conventional initial data. In this context, several harmless external force terms studied in \cite{FCVSN2} became more involved, which required us to improve the corresponding estimates. These new estimates enable us to deal more efficiently with some technical difficulties we had to face (for instance $G_{14}$ in \eqref{systdde}) and to extend in several directions our results from \cite{FCVSN2} about strong solutions:
\begin{enumerate}
 \item Improve the convergence rates: we are now able to reach the usual expected convergence rate $\ee^{k(\frac{\delta}2-\gamma)}$ for $k\in[0,1[$ as close to $1$ as wanted (in \cite{FCVSN2} we limited the convergence rate to $\ee^{\frac1{18}(\frac{\delta}2-\gamma)}$).
  \item Reach the expected bound for the exponent $\gamma<\frac{\delta}2$ where $\delta\in [0,\frac14[$ is the assumed extra-regularity (in \cite{FCVSN2}, for the technical reason mentionned above, we were limited to $\delta\in[0,\frac16]$ and $\gamma \in [0,\frac5{12}\delta]$).
\item Deal with larger initial conventional (3D) magnetic part (we are now able to improve the size of $\coe$ from $|\ln \ee|^\frac14$ to $|\ln \ee|^\frac12$).
\item Reach initial 3D velocity of size $m_0 \ee^{-\frac{\delta}2}$ (thanks to new estimates developped in the present article).
 \item Adapt these results to the case of rotating fluids and improve our result from \cite{FCRF}, which is done in Section 4.
\end{enumerate}
Let us state a simplified version of our main results about System \eqref{MHD}.
\begin{thm}
 \sl{ (Convergence for strong solutions)
 \begin{enumerate}
  \item For any $\Co\geq 1$, any $\delta\in]0,\frac14[$, any $k\in]0,1[$ (as close to $1$ as desired) any $\gamma \in [0, \frac{\delta}2[$, there exist $\ee_0,\Ko\in]0,1]$ such that for any $\ee\in]0,\ee_0]$ and any initial data as in \eqref{Initdata} where $\tuo, \tbo \in H^\delta(\R^2)$, $(\voe, \coe) \in (\dot{H}^{\frac12-\delta}(\R^3) \cap \dot{H}^{\frac12+\delta}(\R^3)) \times H^{\frac12+\delta}(\R^3)$ satisfy:
$$
\|\tuo\|_{H^\delta}, \|\tbo\|_{H^\delta}\leq \Co, \quad \|\voe\|_{\dot{H}^{\frac12+\frac34\delta} \cap \dot{H}^{\frac12+\delta}}\leq \Co \ee^{-\gamma},\quad \mbox{and}\quad \|\coe\|_{H^{\frac12+\delta}}\leq \Ko |\ln \ee|^{\frac12},
$$
there exists a unique global strong solution $\Ue=(\ue,\be)$ of \eqref{MHD} with $\Ue-(\tu,\tb+\ce)\in \dot{F}^\frac12$, which converges in the following sense: for any $p\in[2,\frac2{\delta}[$ there exists $\mathbb{F}_p$ such that,
$$
 \|\Ue-(\tu,\tb+\ce)\|_{L^p L^\frac3{1-\frac2p}} \leq \mathbb{F}_p \ee^{k (\frac{\delta}2-\gamma)}.
$$
\item For any $\Co\geq 1$, any $\delta\in]0,\frac16]$, there exist $m_0,\ee_0\in]0,1]$ such that for any $\ee\in]0,\ee_0]$ and any initial data as in \eqref{Initdata} with $\tuo, \tbo \in H^\delta(\R^2)$, $(\voe, \coe) \in (\dot{H}^\frac12 (\R^3) \cap \dot{H}^{\frac12+\delta}(\R^3)) \times H^{\frac12+\delta}(\R^3)$ satisfying:
$$
  \|\tuo\|_{H^\delta},\|\tbo\|_{H^\delta}\leq \Co, \quad \|\voe\|_{\dot{H}^{\frac12+\frac34\delta} \cap \dot{H}^{\frac12+\delta}}\leq m_0 \ee^{-\frac{\delta}2}, \quad \mbox{and} \quad \|\coe\|_{H^{\frac12+\delta}}\leq \Co,
$$
 there exists a unique global strong solution $\Ue=(\ue,\be)$ of \eqref{MHD} with $\Ue-(\tu,\tb+\ce)\in \dot{F}^\frac12$, which converges in the following sense: for any $p\in]2,\frac2{\delta}[$ there exists $\mathbb{F}_p$ such that,
$$
 \|\Ue-(\tu,\tb+\ce)\|_{L^p L^\frac3{1-\frac2p}} \leq \mathbb{F}_p \max(m_0,\ee^\frac{\delta}2).
$$
\end{enumerate}
 }
\label{ThCVStrongsimplif}
\end{thm}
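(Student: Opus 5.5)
We follow the strategy of \cite{FCVSN2}, with sharper estimates on the forcing terms. Decompose $\Ue=(\tu,\tb+\ce)+(\ve,\de)+\dde$. Here $(\tu,\tb)$ solves \eqref{MHD2D}; it is global and bounded in $\dot{F}^0\cap\dot{F}^\delta$ (2D theory in $H^\delta$), with bounds depending only on $\Co$. The field $\ce$ solves \eqref{Mag}, and energy estimates in $H^{\frac12+\delta}$ with Gronwall give
$$\|\ce\|_{\dot{F}^{\frac12+\delta}\cap \dot F^{0}}\lesssim \|\coe\|_{H^{\frac12+\delta}}\exp\Big(C\int_0^\infty\|\n_h\tu\|_{L^\infty\cap \dot{H}^{\delta}}^{2}\Big)\lesssim \|\coe\|_{H^{\frac12+\delta}}e^{C\Co^4}.$$
The pair $(\ve,\de)$ solves the linear rotating Stokes/heat system with data $(\voe,0)$; the rotation acts only on $\ve$, so $\de=0$. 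For the velocity part we use the Strichartz estimates of the rotating-fluid (Chemin--Desjardins--Gallagher--Grenier) type,
$$\|\ve\|_{L^pL^{\frac3{1-\frac2p}}}\lesssim \ee^{\frac1{2p}\cdot\theta}\|\voe\|_{\dot{H}^{\frac12+\frac34\delta}\cap\dot H^{\frac12+\delta}}.$$
Interpolating the dispersive bound in the frequency-truncated form, and using the $\delta$ extra regularity to absorb the low and high frequency cut-offs, yields a gain $\ee^{\frac{\delta}2}$ in the relevant norms. This produces the factor $\ee^{\frac\delta2-\gamma}$.

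The remainder $\dde=\Ue-(\tu,\tb+\ce)-(\ve,0)$ satisfies a perturbed rotating MHD system, \eqref{systdde}, with zero initial data. Its external forces $G_j$ collect the following:
\begin{itemize}
\item bilinear terms involving $\ve$ (e.g.\ $\ve\cdot\n\ve$, $\ve\cdot\n\tu$, $\tu\cdot\n\ve$);
\item terms coupling $\ce$ with $\ve$;
\item the rotation term acting on the 2D part, $\frac1\ee\tu\wedge e_3$. This term is a horizontal gradient up to the third component: $\tu^h$ is divergence-free in 2D, so $(\tu^h)^\perp$ is a gradient modulo the projection. It is handled through the pressure, the vertical component being absorbed by the structure.
\item the problematic term $G_{14}$, involving $\ce\cdot\n\ce$-type quantities not vanishing with $\ee$.
\end{itemize}
We perform the $\dot H^{\frac12}$ energy estimate for $\dde$; the rotation term disappears since it is skew. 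Each bilinear term with one factor $\ve$ is controlled as $\|\ve\|_{L^pL^{q}}$ times an $\dot F$-norm of the other factor. For the terms linear in $\dde$ we use the Gronwall-type estimate
$$\|\dde\|_{\dot F_t^{\frac12}}^2\le \Big(\mathcal{G}+C\|\dde\|_{\dot F_t^{\frac12}}^3\Big)\exp\Big(C\int_0^t(\|\n\tu\|^2_{\dot H^{\frac12}}+\|\ce\|^2_{\dot H^{\frac32}}+\|\ve\|^p_{L^{q}})\Big).$$
Here the exponential costs $\exp(C\Co^4+C\|\coe\|^2_{H^{\frac12+\delta}})$. With $\|\coe\|\le\Ko|\ln\ee|^{\frac12}$ this factor is $\ee^{-C\Ko^2}$, and choosing $\Ko$ small we make it $\le\ee^{-(1-k)(\frac\delta2-\gamma)}$. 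This explains the $|\ln\ee|^{\frac12}$ size and the loss from $1$ to $k$. A bootstrap then shows that $\|\dde\|_{\dot F^{\frac12}}\lesssim\ee^{k(\frac\delta2-\gamma)}$ stays small, so the solution is global and unique. The Sobolev embedding $\dot F^{\frac12}\subset L^pL^{\frac3{1-\frac2p}}$, combined with the Strichartz bound on $\ve$, gives the claimed rate.

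\textbf{Main obstacle.} The main obstacle is the forcing terms that do not see the dispersion, namely $G_{14}$ and the terms mixing $\ce$ with $\tu,\ve$. For these we intend to prove new estimates in which $\ce$ is measured in $L^2\dot H^{\frac32+\delta}$-type norms interpolated with the $L^p$ Strichartz norm of $\ve$, rather than crude $L^\infty$ bounds. This is meant to obtain a full $\ee^{\frac\delta2-\gamma}$ factor for every forcing term.

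\textbf{Part 2.} Here $\gamma=\frac\delta2$ and $\|\voe\|\le m_0\ee^{-\frac\delta2}$. The Strichartz norm of $\ve$ is then $\lesssim m_0$ rather than small in $\ee$, so we run the same bootstrap with a smallness parameter $\max(m_0,\ee^{\frac\delta2})$. This closes provided $m_0$ is chosen small with respect to the constants depending on $\Co$; with $\|\coe\|\le\Co$ the exponential is bounded. The restriction $\delta\le\frac16$ and $p>2$ come from needing $\ve$ in $L^p$ Strichartz norms with $p>2$, so that the nonlinear terms are integrable without an $\ee$ gain.
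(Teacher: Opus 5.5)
Your overall architecture matches the paper's: the limit system, an oscillating part, a remainder with zero data, a $\dot H^{\frac12}$ energy estimate with Gronwall and bootstrap, and $\Ko$ small so that $\ee^{-C\Ko^2}$ only costs a fraction of the rate. However, your choice of oscillating part leaves a genuine gap.

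You let $(\ve,\de)$ solve the \emph{unforced} linear rotating system with data $(\voe,0)$. The equation for your remainder $\dde$ therefore still contains the source $\bP(\tb\cdot\n\ce+\ce\cdot\n\tb+\ce\cdot\n\ce)$, which has no oscillating factor. In the $\dot H^{\frac12}$ energy estimate the rotation term is skew and disappears, so nothing extracts a power of $\ee$ from this source. Its contribution is only bounded by something of size $\|\coe\|_{H^{\frac12+\delta}}(1+\|\coe\|_{H^{\frac12+\delta}})$, which is not small and, under $\|\coe\|_{H^{\frac12+\delta}}\le\Ko|\ln\ee|^{\frac12}$, even grows. You therefore get neither the rate nor the smallness $\|\dde\|_{\dot H^{\frac12}}\le\frac1{4C}\min(\nu,\nu')$ needed to close the bootstrap; this is exactly the obstruction the paper points out.

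Your proposed remedy, interpolating norms of $\ce$ against Strichartz norms of $\ve$, cannot apply to $\ce\cdot\n\ce$ or $\tb\cdot\n\ce$, because $\ve$ does not appear in them. The missing idea is to put this source into the dispersive part. The paper's $\We$ solves \eqref{SystWe}, the rotating Stokes system \emph{forced} by $\bP(\tb\cdot\n\ce+\ce\cdot\n\tb+\ce\cdot\n\ce)$. It is controlled by Strichartz estimates for forced systems (isotropic with $k=\frac43$, anisotropic with third Besov index $1$) together with Proposition~\ref{PropFext}. After this, every source in the system for $(\dde,\de)$ contains $\We$ or the unknown. The Strichartz bounds then carry a factor $\|\coe\|_{H^{\frac12+\delta}}^2\lesssim\Ko^2|\ln\ee|$, which is a second reason, besides $\ee^{-\Do\Ko^2}$, for the loss from $1$ to $k$.

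You also misidentify $G_{14}$. In the paper it is $\ce\cdot\n\We$, which does contain the oscillations. Its improvement comes from writing it as $\div(\ce\otimes\We)$, moving the derivative onto $\de$, and using the fractional Leibniz rule with the remainder $M_s$ of Proposition~\ref{propM2}.

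Two further points fail as stated.

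In Part~1 the statement includes $p=2$, but $\dot F^{\frac12}$ does not embed into $L^2L^\infty$. The paper proves the $\dot F^s$ estimates for $s$ on both sides of $\frac12$; this is where $\voe\in\dot H^{\frac12-\delta}$ enters, a hypothesis you never use. It then interpolates to $L^2\dot B^{\frac32}_{2,1}\subset L^2L^\infty$ and adds a separate bound on $\|\We\|_{L^2L^\infty}$.

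In Part~2, ``the same bootstrap with smallness $\max(m_0,\ee^{\frac\delta2})$'' breaks on the interaction terms $F_6,F_8,G_9,G_{13}$.
\begin{itemize}
\item Their anisotropic Strichartz estimates need $\voe$ in $\dot B^{\sigma_2}_{2,1}$, which Lemma~\ref{estimBsHs} controls only for $\sigma_2<\frac12+\delta$ strictly. With the natural time exponent, the $\ee$-gain is then strictly below $\frac\delta2$, and against $m_0\ee^{-\frac\delta2}$ a negative power of $\ee$ remains.
\item The paper overcomes this with the extra time integrability of $\tu,\tb$ from Proposition~\ref{Propestimtutb}. $\We$ is then needed only in $L^{2\overline{p_1}}$ in time, which lowers $\sigma_2$ to $\frac12+\frac{1+\mu}2\delta$ while keeping the exact gain $\ee^{\frac\delta2}$.
\item The restriction $\delta\le\frac16$ comes from the calibration $\lambda=2\delta\le\frac13$ in the $F_8,G_{13}$ estimate.
\item The case $p=2$ is excluded because $\|\We\|_{L^2L^\infty}$ cannot be bounded in this regime, not for the reasons you give.
\end{itemize}
Part~2 also inherits the main gap: with $\|\coe\|_{H^{\frac12+\delta}}\le\Co$, the unabsorbed source is of order one.
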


\subsection{Outline of the article}

The present article is organized as follows: in the next section, we give general results for the limit systems and, after stating a more detailed version of Theorem \ref{ThCVStrongsimplif}, we prove the first point. Section \ref{SectLarger} is devoted to the proof of the second point. Section \ref{SectRF} deals with the adaptation of these results to the rotating fluids system.

In the appendix, we recall general notations, definitions, and properties as well as isotropic and anisotropic Strichartz estimates, which are crucial in our study.

\section{Improvement of the convergence rates}

In this section, we will prove Point 1 from Theorem \ref{ThCVStrongsimplif}. We will often refer to computations or notations from \cite{FCVSN2, FCZY} and will mainly focus on what is new.

\subsection{Properties of the limit systems}

We refer to \cite{FCVSN2} for details about the following results for   the limit systems \eqref{MHD2D} (which is the six components version of the classical 2D-MHD system) and \eqref{Mag}.

\begin{thm} \cite{FCVSN2}
 \sl{For any $\tuo, \tbo\in L^2(\R^2)^3$ there exists a unique global solution $(\tu,\tb)\in \dot{F}^0(\R^2)$. Moreover for any $t\geq 0$,
 \begin{multline}
 \|(\tu,\tb)\|_{\dot{F}_t^0}^2 =\|\tu(t)\|_{L^2(\R^2)}^2 +\|\tb(t)\|_{L^2(\R^2)}^2 +2\nu \int_0^t \|\n_h\tu(\tau)\|_{L^2(\R^2)}^2 d\tau +2\nu' \int_0^t \|\n_h\tb(\tau)\|_{L^2(\R^2)}^2 d\tau\\
 \leq \|\tuo\|_{L^2(\R^2)}^2 +\|\tbo\|_{L^2(\R^2)}^2,
 \label{estimtutb}
\end{multline}
Propagation of regularity: moreover, if in addition $\tu_0,\tb_0 \in \dot{H}^s$ for some $s\in]-1,1[$, then for any $t\geq 0$ we have (for some constant $C_{\nu,\nu'}>0$),
\begin{multline}
 \|(\tu,\tb)\|_{\dot{F}_t^s}^2 =\|\tu(t)\|_{\dot{H}^s(\R^2)}^2 +\|\tb(t)\|_{\dot{H}^s(\R^2)}^2 +\int_0^t \left(\nu \|\n_h\tu(\tau)\|_{\dot{H}^s(\R^2)}^2 +\nu' \|\n_h\tb(\tau)\|_{\dot{H}^s(\R^2)}^2\right) d\tau\\
 \leq \left(\|\tuo\|_{\dot{H}^s(\R^2)}^2 +\|\tbo\|_{\dot{H}^s(\R^2)}^2\right) e^{C_{\nu, \nu'}\left(\|\tu_0\|_{L^2(\R^2)}^2 +\|\tb_0\|_{L^2(\R^2)}^2 \right)},
 \label{estimtutbHs}
\end{multline}
\label{ThExistlim}
 }
\end{thm}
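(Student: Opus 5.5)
The plan is to treat \eqref{MHD2D} as a classical 2D-MHD system, with three extra transported-diffused scalar components. I would first build global weak solutions by a Friedrichs (spectral truncation) scheme. Applying the projector $\mathbb{P}$ on horizontal divergence-free fields to the first two components gives a closed 2D-MHD system for $(\tu^h,\tb^h)$. The third components solve linear advection-diffusion equations: $\d_t \tu^3 -\nu\D_h\tu^3+\tu^h\cdot\n_h\tu^3=\tb^h\cdot\n_h\tb^3$, and symmetrically for $\tb^3$.

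For the energy estimate \eqref{estimtutb}, I take the $L^2(\R^2)$ inner product of the $\tu$ equation with $\tu$ and of the $\tb$ equation with $\tb$, then sum. Since $\div_h\tu^h=0$, the transport terms $\tu^h\cdot\n_h$ vanish. The pressure has no third component, and its first two components are orthogonal to the divergence-free $\tu^h$. The only delicate point is the divergence-free condition on $\tb^h$, which is needed for the two coupling terms to cancel. It is propagated from the initial data: taking $\div_h$ of the $\tb^h$ equation gives a linear transport-diffusion equation for $\div_h\tb^h$, since $\div_h(\tu^h\cdot\n_h\tb^h-\tb^h\cdot\n_h\tu^h)=\tu^h\cdot\n_h\div_h \tb^h$. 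With $\div_h\tb^h=0$, the identity $(\tb^h\cdot\n_h\tb|\tu)+(\tb^h\cdot\n_h\tu|\tb)=0$ holds for all six components. This yields the energy equality, which is valid at the approximate level, and the bound passes to the limit. Uniqueness in $\dot F^0(\R^2)$ is the standard 2D argument. Taking the difference $\delta U$ of two solutions, each nonlinear term is bounded with Ladyzhenskaya's inequality $\|f\|_{L^4}^2\le C\|f\|_{L^2}\|\n_h f\|_{L^2}$ by $\frac{\nu}{4}\|\n_h\delta U\|^2+C\|\n_h U\|_{L^2}^2\|\delta U\|_{L^2}^2$, and Gronwall closes. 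The same structure also gives $\mathcal C_t L^2$ continuity.

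For propagation of regularity with $s\in]-1,1[$, I would work with dyadic blocks $\dot\Delta_q$, or directly with the $\dot H^s$ inner product. The nonlinear terms have the form $\div_h(f\otimes g)$ with $f,g\in\{\tu,\tb\}$. For $s\in]-1,1[$ the product law in 2D, $\|fg\|_{\dot H^{s}}\le C\|f\|_{\dot H^{s+\frac12}}\|g\|_{\dot H^{\frac12}}$ with $s+\frac12<1$, is valid when $|s|<1$ up to symmetrization. I would instead use the standard estimate
$$|(\div_h(f\otimes g)|h)_{\dot H^s}|\le C\|f\|_{L^2}^{\frac12}\|\n_h f\|_{L^2}^{\frac12}\|g\|_{\dot H^s}^{\frac12}\|\n_h g\|_{\dot H^s}^{\frac12}\|\n_h h\|_{\dot H^s},$$
together with its symmetric counterpart, which holds for $-1<s<1$ via Bony's decomposition. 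Young's inequality then gives
$$\frac{d}{dt}\|(\tu,\tb)\|_{\dot H^s}^2+\nu\|\n_h\tu\|_{\dot H^s}^2+\nu'\|\n_h\tb\|_{\dot H^s}^2\le C_{\nu,\nu'}\big(\|\n_h\tu\|_{L^2}^2+\|\n_h\tb\|_{L^2}^2\big)\|(\tu,\tb)\|_{\dot H^s}^2,$$
possibly with the $L^2$ norms also appearing, which are bounded by the energy. Gronwall's lemma combined with \eqref{estimtutb}, which bounds $\int_0^\infty(\nu\|\n_h\tu\|^2+\nu'\|\n_h\tb\|^2)$ by $\frac12(\|\tuo\|^2+\|\tbo\|^2)$, produces the exponential factor in \eqref{estimtutbHs}.

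The main obstacle is this trilinear $\dot H^s$ estimate over the full range $s\in]-1,1[$. For $s$ near $1$ or near $-1$, the paraproduct and remainder terms require commutator handling. For $s>0$ this is a commutator estimate for $\tu^h\cdot\n_h\dot\Delta_q$. For $s\le0$ it relies on the remainder being controlled since $2s+2>0$, i.e.\ $s>-1$. This is exactly where the endpoints fail. I would first try the commutator splitting $[\dot\Delta_q,T_{f}]\n_h g$ plus $T_{\n g}f$ plus $R(f,\n g)$, with $f$ estimated in $L^2\cap\dot H^1$ via interpolation.
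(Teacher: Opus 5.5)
The paper does not prove this statement; it quotes it from \cite{FCVSN2}, so the comparison is with the standard argument. Your treatment of existence, of the propagation of $\div_h\tb^h=0$, of the energy equality and of uniqueness is the standard one and is fine; the identity $\div_h(\tu^h\cdot\n_h\tb^h-\tb^h\cdot\n_h\tu^h)=\tu^h\cdot\n_h\div_h\tb^h$ is correct.

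The gap is in the propagation of regularity. The differential inequality you display, with Gronwall weight $C(\|\n_h\tu\|_{L^2}^2+\|\n_h\tb\|_{L^2}^2)$, does not follow from the trilinear estimate you propose. Write $U=(\tu,\tb)$. Your estimate puts only $\|f\|_{L^2}^{1/2}\|\n_h f\|_{L^2}^{1/2}$ on the low-regularity factor and leaves $\|\n_h U\|_{\dot H^s}^{3/2}$. Absorbing into the dissipation therefore forces Young's inequality with exponents $(\frac43,4)$, which yields
\[
\frac{d}{dt}\|U\|_{\dot H^s}^2+\nu\|\n_h\tu\|_{\dot H^s}^2+\nu'\|\n_h\tb\|_{\dot H^s}^2\le C_{\nu,\nu'}\|U\|_{L^2}^2\|\n_hU\|_{L^2}^2\|U\|_{\dot H^s}^2 .
\]
With \eqref{estimtutb}, Gronwall then gives the factor $\exp\{C_{\nu,\nu'}(\|\tuo\|_{L^2}^2+\|\tbo\|_{L^2}^2)^2\}$, not the one in \eqref{estimtutbHs}. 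So ``the $L^2$ norms also appearing'' is not harmless: it doubles the power of the data in the exponent. This weaker bound would suffice for the later use in the paper, where all constants depend on $\Co$ anyway, but it is not the statement.

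The missing ingredient is the sharp two-dimensional transport estimate, in which the advecting field enters only through $\|\n_h v\|_{L^2}$:
\[
|(v^h\cdot\n_h a|a)_{\dot H^s}|\le C\|\n_h v\|_{L^2}\|a\|_{\dot H^s}\|\n_h a\|_{\dot H^s},\qquad \div_h v^h=0,\quad s\in]-1,1[.
\]
Since $\dot H^1(\R^2)\not\hookrightarrow L^\infty$, no product law gives this. It comes from four ingredients:
\begin{itemize}
\item Bony's decomposition;
\item the cancellation $(\dot S_{q-1}v^h\cdot\n_h\ddq a|\ddq a)_{L^2}=0$;
\item the commutator bound for $[\ddq,\dot S_{q-1}v^h\cdot\n_h]$, using $\|\n_h\dot S_{q-1}v\|_{L^\infty}\lesssim 2^q\|\n_hv\|_{L^2}$;
\item the remainder, which is where $s>-1$ enters.
\end{itemize}

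For the MHD coupling you can either check the analogous cancellation of $(\dot S_{q-1}\tb^h\cdot\n_h\ddq\tb|\ddq\tu)+(\dot S_{q-1}\tb^h\cdot\n_h\ddq\tu|\ddq\tb)$, or use Els\"asser variables $a_\pm=\tu\pm\tb$. Writing $N_u,N_b$ for the nonlinear terms of the two equations, one has
\[
(N_u|\tu)_{\dot H^s}+(N_b|\tb)_{\dot H^s}=\tfrac12\big[(a_-^h\cdot\n_h a_+|a_+)_{\dot H^s}+(a_+^h\cdot\n_h a_-|a_-)_{\dot H^s}\big],
\]
which consists of pure transport terms; keep the dissipation and pressure in the original variables. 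Young with exponents $(2,2)$ then gives exactly the inequality you wrote, and hence \eqref{estimtutbHs}.

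You have also located the difficulty in the wrong place. Your symmetrized trilinear bound follows from $\|fg\|_{\dot H^s}\lesssim\|f\|_{\dot H^{1/2}}\|g\|_{\dot H^{s+1/2}}+\|f\|_{\dot H^{s+1/2}}\|g\|_{\dot H^{1/2}}$. This holds for every $s>-1$ by paraproducts alone, using $\|\dot S_{q-1}f\|_{L^\infty}\lesssim 2^{q/2}\|f\|_{\dot H^{1/2}}$ in 2D, with no commutator. Commutators are needed precisely to reach the sharper form above.
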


\begin{thm} \cite{FCVSN2}
 \sl{Let $\nu'>0$ and $\tu \in \dot{E}^0(\R^2)$ described by the previous theorem. For any $s\in ]-1,1[$ and any $\coe \in \dot{H}^s(\R^3)$, there exists a unique global solution $\ce\in \dot{E}^s(\R^3)$ of \eqref{Mag}. Moreover there exists a constant $C_{\nu, \nu',s}>0$ such that $\ce$ satisfies for any $t\geq 0$:
 \begin{multline}
  \|\ce\|_{\dot{E}_t^s}^2 =\|\ce(t)\|_{\dot{H}^s(\R^3)}^2 +\nu' \int_0^t \|\n \ce(\tau)\|_{\dot{H}^s(\R^3)}^2 d\tau\\
 \leq \|\coe\|_{\dot{H}^s(\R^3)}^2 \exp\left\{C_{\nu, \nu',s} \int_0^t (1+\|\tu (\tau)\|_{L^2(\R^2)}^2) \|\n_h \tu (\tau)\|_{L^2(\R^2)}^2 d\tau\right\}\\
 \leq \|\coe\|_{\dot{H}^s(\R^3)}^2  \exp\left\{C_{\nu,\nu',s}(1+\|\tu_0\|_{L^2(\R^2)}^2 +\|\tb_0\|_{L^2(\R^2)}^2)(\|\tu_0\|_{L^2(\R^2)}^2 +\|\tb_0\|_{L^2(\R^2)}^2)\right\}.
 \label{estimcHs}
 \end{multline}
\label{Thc}
 }
\end{thm}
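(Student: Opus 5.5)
This is Theorem \ref{Thc}: the linear transport-diffusion system \eqref{Mag} for $\ce$, driven by the 2D velocity $\tu$ from Theorem \ref{ThExistlim}. The plan is to prove existence and uniqueness by a standard Friedrichs (or Galerkin) scheme, and to get the estimate \eqref{estimcHs} from an a priori $\dot{H}^s$ energy bound closed with Gronwall. Since the system is linear in $\ce$, uniqueness follows from the same energy estimate applied to the difference of two solutions with zero initial data. So everything reduces to the a priori bound.

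Take the $\dot{H}^s(\R^3)$ inner product of \eqref{Mag} with $\ce$. This gives
$$\frac12\frac{d}{dt}\|\ce\|_{\dot{H}^s}^2+\nu'\|\n\ce\|_{\dot{H}^s}^2 = -(\tu\cdot\n\ce|\ce)_{\dot{H}^s}+(\ce\cdot\n\tu|\ce)_{\dot{H}^s}.$$
Here $\tu=\tu(x_h)$ depends only on the horizontal variables. Its horizontal part is divergence free, but $\tu^3$ does not enter the transport term since $\d_3\tu=0$. So $\tu\cdot\n\ce=\tu^h\cdot\n_h\ce$, and in $\ce\cdot\n\tu=\ce^h\cdot\n_h\tu$ only horizontal derivatives of $\tu$ appear. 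In both terms we duality-pair in $\dot{H}^{s-1}\times\dot{H}^{s+1}$ and use anisotropic product laws. In the horizontal variables (dimension 2) we have $\|fg\|_{\dot{H}^{s_1+s_2-1}_h}\lesssim\|f\|_{\dot{H}^{s_1}_h}\|g\|_{\dot{H}^{s_2}_h}$ for $s_1,s_2<1$ with $s_1+s_2>0$. The vertical direction is a mere parameter for $\tu$, so we apply these laws frequency-wise in $\xi_3$ (equivalently, in $L^2_{x_3}$-valued spaces, using $\dot{H}^s(\R^3)$ norms controlled by $\xi_h$/$\xi_3$ splitting).

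For the transport term we write $\tu^h\cdot\n_h\ce=\div_h(\tu^h\ce)$. Then with $s_1=\tfrac12$ on $\tu$ and $s_2=s+\tfrac12$ on $\ce$ we get
$$|(\tu\cdot\n\ce|\ce)_{\dot{H}^s}|\lesssim\|\tu\|_{\dot{H}^{1/2}(\R^2)}\|\ce\|_{\dot{H}^{s+1/2}}\|\n\ce\|_{\dot{H}^s}.$$
By interpolation $\|\tu\|_{\dot{H}^{1/2}}\le\|\tu\|_{L^2}^{1/2}\|\n_h\tu\|_{L^2}^{1/2}$ and $\|\ce\|_{\dot{H}^{s+1/2}}\le\|\ce\|_{\dot{H}^s}^{1/2}\|\n\ce\|_{\dot{H}^s}^{1/2}$. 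Young's inequality then absorbs $\|\n\ce\|_{\dot{H}^s}^{3/2}$ into the dissipation and leaves
$$C\|\tu\|_{L^2}^2\|\n_h\tu\|_{L^2}^2\|\ce\|_{\dot{H}^s}^2.$$
The stretching term $\ce\cdot\n\tu$ is treated similarly, with $\n_h\tu\in L^2$ paired against $\ce$. This gives $C\|\n_h\tu\|_{L^2}\|\ce\|_{\dot{H}^{s+1/2}}\|\ce\|_{\dot{H}^{s+1/2}}$-type bounds, hence after Young a term $C\|\n_h\tu\|_{L^2}^2\|\ce\|_{\dot{H}^s}^2$. This is the origin of the factor $(1+\|\tu\|_{L^2}^2)\|\n_h\tu\|_{L^2}^2$. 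Gronwall then yields the first inequality in \eqref{estimcHs}. The second follows from \eqref{estimtutb}: we bound $\sup_\tau\|\tu(\tau)\|_{L^2}^2$ by the initial energy, and $2\nu\int_0^\infty\|\n_h\tu\|_{L^2}^2$ likewise, absorbing $\nu$ into $C_{\nu,\nu',s}$.

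The main obstacle is making the product estimates legitimate in the mixed 2D/3D setting across the whole range $s\in]-1,1[$. The factor $\tu$ is not in any 3D Sobolev space, so the product laws must be anisotropic, applied slice by slice in $x_3$ (or via Fourier in $\xi_3$) with the weight $|\xi|^{2s}$ rather than $|\xi_h|^{2s}$. I would first try the paraproduct decomposition in the horizontal variables only, using $|\xi|^{s}\sim$ bounds for $s\ge 0$ and duality for $s<0$. If the full weight causes trouble, I would treat $s\ge0$ and $s<0$ separately, with the endpoint conditions $s_1,s_2<1$, $s_1+s_2>0$ dictating exactly the range $]-1,1[$. The divergence-free property of Remark \ref{Rem-div} is not needed for the estimate itself.
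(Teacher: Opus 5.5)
Your overall route is the natural one: an $\dot{H}^s$ energy estimate on \eqref{Mag}, the 2D--3D product law, Gronwall, then \eqref{estimtutb} for the second inequality. The paper itself only quotes this result from \cite{FCVSN2}, so there is no in-paper proof to compare against. Your route correctly produces the factor $(1+\|\tu\|_{L^2}^2)\|\n_h\tu\|_{L^2}^2$: transport gives $\|\tu\|_{L^2}^2\|\n_h\tu\|_{L^2}^2$ and stretching gives $\|\n_h\tu\|_{L^2}^2$.

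The gap is that the two product estimates you actually write down do not cover $s\in]-1,1[$. Proposition~\ref{prod2D3D} needs both indices $<1$ and their sum $>0$. Your transport bound uses the indices $(\frac12,s+\frac12)$ on $(\tu,\ce)$, so it requires $s<\frac12$. Your stretching bound pairs $\n_h\tu\in L^2$ with $\ce\in\dot{H}^{s+\frac12}$, i.e.\ indices $(0,s+\frac12)$, so it requires $|s|<\frac12$. For $s\in]-1,-\frac12]\cup[\frac12,1[$ the argument as written therefore breaks down. Your closing claim that the endpoint conditions ``dictate exactly'' $]-1,1[$ holds only once the indices are chosen depending on $s$, and you leave that open.

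The repair is short:
\begin{itemize}
\item Transport, $0<s<1$: do not integrate by parts. Write $|(\tu\cdot\n\ce|\ce)_{\dot{H}^s}|\le\|\tu\cdot\n\ce\|_{\dot{H}^{s-1}}\|\ce\|_{\dot{H}^{s+1}}$. Proposition~\ref{prod2D3D} with indices $(\frac12,s-\frac12)$, valid for $0<s<\frac32$, gives $\|\tu\cdot\n\ce\|_{\dot{H}^{s-1}}\lesssim\|\tu\|_{\dot{H}^{\frac12}(\R^2)}\|\ce\|_{\dot{H}^{s+\frac12}}$, which is the same final bound as yours.
\item Stretching, $0<s<1$: pair $\ce\cdot\n_h\tu$ in $\dot{H}^{s-1}$ against $\dot{H}^{s+1}$, using indices $(0,s)$.
\item Stretching, $-1<s<0$: pair in $\dot{H}^{s}$ against $\dot{H}^{s}$, using indices $(0,s+1)$.
\end{itemize}
Your own pairing still handles $|s|<\frac12$, in particular $s=0$. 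In every stretching case you get $\|\n_h\tu\|_{L^2}\|\ce\|_{\dot{H}^s}\|\n\ce\|_{\dot{H}^s}$, hence $\frac{C}{\nu'}\|\n_h\tu\|_{L^2}^2\|\ce\|_{\dot{H}^s}^2$ after Young. The rest of your argument then goes through unchanged.

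Two smaller corrections. First, $\tu\cdot\n\ce\neq\tu^h\cdot\n_h\ce$: the term $\tu^3\d_3\ce$ is present. The fact $\d_3\tu=0$ only removes vertical derivatives of $\tu$, which is what reduces the stretching term to $\ce^h\cdot\n_h\tu$. The slip is harmless: since $\div\tu=\div_h\tu^h=0$ in $\R^3$, you have $\tu\cdot\n\ce=\sum_{i=1}^3\d_i(\tu^i\ce)$, and your divergence-form estimate applies to all three components. Second, you do not need to apply the 2D law slice by slice in $\xi_3$, which by itself only controls $|\xi_h|$-weighted norms. Proposition~\ref{prod2D3D} is exactly the 2D--3D product law with the isotropic $\dot{H}^{\sigma}(\R^3)$ norm, and it is all the argument needs.
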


Let us continue with the following result which is an obvious adaptation to system \eqref{MHD2D} of Propositions 2 from \cite{FCRF} and 5.1 from \cite{FCVSN2} and details how time-integrable is $t\mapsto \|(\tu,\tb)(t)\|_{\dot{H}^{\sigma}}$ for a given $\sigma$.
\begin{prop} (\cite{FCVSN2} Proposition 5.1)
 \sl{Let $\sigma_+>0$, and $\tu,\tb$ given by Theorem \ref{ThExistlim} with initial data $\tuo, \tbo \in H^{\sigma_+}$. Then for any $\sigma\in[0,1+\sigma_+]$, we have that $t\mapsto \|(\tu,\tb)(t)\|_{\dot{H}^{\sigma}}$ is in $L^p(\R_+)$ for any $p$ in:
\begin{equation}
  \begin{cases}
  \vspace{1mm}
  [\frac2{\sigma}, +\infty] & \mbox{if }\sigma\in[0,\sigma_+],\\
    \vspace{1mm}
   [\frac2{\sigma}, \frac2{\sigma-\sigma_+}] & \mbox{if }\sigma\in]\sigma_+,1],\\
   [2, \frac2{\sigma-\sigma_+}] & \mbox{if }\sigma\in]1,1+\sigma_+].
  \end{cases}
 \end{equation}
 Moreover in any case, we have for some constant $C$ (depending on $\nu,\nu',\|\tu_0\|_{L^2}, \|\tb_0\|_{L^2},p$)
 $$
 \|(\tu,\tb)\|_{L^p\dot{H}^\sigma} \leq C \left(\|\tuo\|_{H^{\sigma_+}}^2 +\|\tbo\|_{H^{\sigma_+}}^2\right)^\frac12.
 $$
 }
 \label{Propestimtutb}
\end{prop}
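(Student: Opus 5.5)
The plan is to interpolate between two families of estimates: uniform-in-time bounds in $\dot{H}^{\sigma}$ for $\sigma\le\sigma_+$, and decay/integrability bounds coming from the dissipation. The ingredients are the energy estimate \eqref{estimtutb} and the propagation of regularity \eqref{estimtutbHs} from Theorem \ref{ThExistlim}. We apply \eqref{estimtutbHs} with $s=\sigma_+$; if $\sigma_+\ge 1$ we first replace $\sigma_+$ by some value in $]0,1[$ and handle the higher part by a bootstrap, so we assume $\sigma_+<1$. Together with \eqref{estimtutb} this gives
$$
(\tu,\tb)\in L^\infty(\R_+;\dot{H}^0\cap\dot{H}^{\sigma_+}),\qquad (\tu,\tb)\in L^2(\R_+;\dot{H}^1\cap\dot{H}^{1+\sigma_+}),
$$
with norms bounded by $C(\|\tuo\|_{H^{\sigma_+}}^2+\|\tbo\|_{H^{\sigma_+}}^2)^{1/2}$ and constants depending only on $\nu,\nu'$ and the $L^2$ norms of the data.

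The key step is complex/real interpolation in the regularity index, $\|f\|_{\dot{H}^\sigma}\le \|f\|_{\dot{H}^{a}}^{1-\theta}\|f\|_{\dot{H}^{b}}^{\theta}$ with $\sigma=(1-\theta)a+\theta b$, followed by Hölder in time. A function in $L^\infty\dot{H}^a\cap L^2\dot{H}^{b}$ then lies in $L^{2/\theta}\dot{H}^\sigma$. We treat the three cases as follows.

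\emph{Case $\sigma\in[0,\sigma_+]$.} The upper endpoint $p=\infty$ follows from interpolating the two $L^\infty$ bounds. The lower endpoint $p=2/\sigma$ comes from interpolating $L^\infty\dot{H}^0$ with $L^2\dot{H}^1$, taking $\theta=\sigma$. Intermediate values of $p$ then follow by Hölder interpolation in time between $L^{2/\sigma}$ and $L^\infty$.

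\emph{Case $\sigma\in]\sigma_+,1]$.} Pairing $L^\infty\dot{H}^0$ with $L^2\dot{H}^1$ again gives $p=2/\sigma$. Pairing $L^\infty\dot{H}^{\sigma_+}$ with $L^2\dot{H}^{1+\sigma_+}$ uses $\theta=\sigma-\sigma_+$ and gives $p=2/(\sigma-\sigma_+)$. Interpolating in $p$ covers the whole interval.

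\emph{Case $\sigma\in]1,1+\sigma_+]$.} Pairing $L^2\dot{H}^1$ with $L^2\dot{H}^{1+\sigma_+}$ gives $p=2$. Pairing $L^\infty\dot{H}^{\sigma_+}$ with $L^2\dot{H}^{1+\sigma_+}$ gives $p=2/(\sigma-\sigma_+)$, which is less than $2$ only formally. Here $\sigma-\sigma_+\le 1$, so $2/(\sigma-\sigma_+)\ge 2$, and the interval is nonempty.

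In every case the bound is a product of powers of quantities each bounded by $C(\|\tuo\|_{H^{\sigma_+}}^2+\|\tbo\|_{H^{\sigma_+}}^2)^{1/2}$, so the product is bounded by the same quantity. The exponential factor in \eqref{estimtutbHs} depends only on the $L^2$ norms and goes into $C$.

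The main obstacle is the restriction $s\in]-1,1[$ in Theorem \ref{ThExistlim}, which blocks a direct application when $\sigma_+\ge1$. For $\sigma_+\ge 1$ I would propagate higher regularity by a further energy estimate. The 2D product laws control $\|\tu^h\cdot\n_h\tu\|_{\dot{H}^{\sigma_+-1}}$ by Sobolev norms already shown to be in $L^2_t$. This closes a Gronwall argument whose exponent is integrable by the previous cases applied with an exponent below $1$. Once this is done, the interpolation argument above goes through unchanged.
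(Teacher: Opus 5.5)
Your proof is correct and follows the approach the paper relies on (it does not reprove this proposition, only cites it from earlier work as an "obvious adaptation"). You take the energy bound \eqref{estimtutb}, which gives $L^\infty L^2\cap L^2\dot H^1$, and the propagated bound \eqref{estimtutbHs} at $s=\sigma_+$, which gives $L^\infty\dot H^{\sigma_+}\cap L^2\dot H^{1+\sigma_+}$; you then interpolate in the regularity index, apply H\"older in time, and fill in intermediate $p$ by log-convexity of $L^p$ norms. Your detour for $\sigma_+\ge 1$ is unnecessary and only sketched: for $\sigma_+>1$ the third case contains $\sigma\le\sigma_+$, where $\frac2{\sigma-\sigma_+}$ is meaningless, so the statement is implicitly for $\sigma_+\le 1$, and the paper only uses it with small $\sigma_+$ (e.g.\ $2(1-\mu)\delta$), well inside the range $]-1,1[$ of \eqref{estimtutbHs}.
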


\subsection{Existence of local strong solutions}

In order to be able to adapt the classical methods used to prove the Leray or Fujita-Kato theorems, we naturally consider $\Ue-(\tu, \tb)$. The initial data then becomes $(\voe,\coe)$ but some linear terms (advected by $\tu$) appear in the system. Let us recall that dispersion only affects the first three components (that is the velocity part), so we need to filter these terms out, which is the reason why we introduced in \cite{FCVSN2} System \eqref{Mag} and finally studied $\De$, defined as follows:
\begin{equation}
 \De = \left(\begin{array}{c}\ve \\ \de\end{array}\right) \overset{def}{=} \Ue-\left(\begin{array}{c}\tu \\ \tb +\ce\end{array}\right).
\end{equation}
This new quantity $\De$ satisfies the following modified MHD system:
\begin{equation}
\begin{cases}
\d_t \ve -\nu \D \ve +\frac{1}{\ee} \ve\wedge e_3 +\ve\cdot \n \ve +\ve\cdot \n \tu +\tu\cdot \n \ve -\de\cdot \n \de -\de\cdot \n (\tb +\ce) -(\tb +\ce)\cdot \n \de\\
\hspace{9.5cm}
 =-\n \qe + \tb\cdot \n \ce +\ce\cdot \n \tb +\ce \cdot \n \ce,\\
\d_t \de -\nu' \D \de +\ve\cdot \n \de -\de\cdot \n \ve + \tu\cdot \n \de -\de\cdot \n \tu +\ve\cdot \n (\tb +\ce) -(\tb +\ce)\cdot \n \ve=0,\\
\div \ve=0,\\
{(\ve,\de)}_{|t=0}=(\voe,0),
\end{cases}
\label{MHDmodif}
\end{equation}
for which it is now easy to adapt the Leray and Fujita-Kato theorems. More precisely we obtained:

\begin{thm}(\cite{FCVSN2} Existence of local Fujita-Kato strong solutions)
 \sl{Let $\ee>0$ fixed, $\tu,\tb\in L^2(\R^3)$, $\voe\in\dot{H}^\frac12$ and $\coe\in H^\frac12$. For any initial data as in \eqref{Initdata}, there exists a unique local strong solution $\De$ of \eqref{MHDmodif} with lifespan $T_\ee^*>0$ such that for any $T<T_\ee^*$, $\De\in \dot{F}_T^\frac12$. Moreover, the following properties are true:
 \begin{itemize}
  \item Regularity propagation: if in addition $\voe,\coe\in \dot{H}^s$ for some $s\in]-1,1[$ then for any $t<T_\ee^*$, $\De\in \dot{F}_t^\frac12 \cap \dot{F}_t^s$ and for some $C=C_{\nu,\nu',\|\tuo\|_{L^2}, \|\tbo\|_{L^2}}>0$,
\begin{multline}
  \|\De\|_{\dot{F}_t^s}^2 \leq C \Big(\|\voe\|_{\dot{H}^s}^2 +\|\coe\|_{\dot{H}^s}^2 \Big) \times e^{C\Big(1+\|\coe\|_{\dot{H}^\frac12}^2 +\|\coe\|_{\dot{H}^\frac12}^4\big)}\\
  \times \exp \left\{C \int_0^t (\|\n\ve(t')\|_{\dot{H}^\frac12}^2 +(\|\n\de(t')\|_{\dot{H}^\frac12}^2)dt'\big)\right\}
\end{multline}
  \item Continuation criterion: $\int_0^{T_\ee^*} \|\n \De (\tau)\|_{\dot{H}^\frac12}^2 d\tau <\infty \Longrightarrow T_\ee^*=\infty$.
  \item Global existence for small data: there exists a constant $c_0>0$ such that if $\|\voe\|_{\dot{H}^\frac12}\leq c_0 \nu$ (we recall that $d_{\ee|t=0}=0$) then $T_\ee^*=+\infty$ and for any $t\geq 0$ (for the same $C$):
  %\begin{multline}
  % \|\De\|_{\dot{F}_t^\frac12} \leq C_{\nu,\nu'} \Bigg[\|\voe\|_{\dot{H}^\frac12}^2 +\|\tb\|_{L_t^\infty L^2}^2 \|\n \ce\|_{L_t^2 \dot{H}^\frac12}^2 + \|\n\tb\|_{L_t^2 L^2}^2 \| \ce\|_{L_t^\infty \dot{H}^\frac12}^2\\
  % +\|\ce\|_{L_t^\infty \dot{H}^\frac12}^2 \|\n \ce\|_{L_t^2 \dot{H}^\frac12}^2 \Bigg] \times \exp C_{\nu,\nu'} \Big\{(1+\|\tu\|_{L_t^\infty L^2(\R^2)}^2) \|\n_h \tu\|_{L_t^2 L^2(\R^2)}^2\\
  % +(1+\|\tb\|_{L_t^\infty L^2(\R^2)}^2) \|\n_h \tb\|_{L_t^2 L^2(\R^2)}^2 +(1+\|\ce\|_{L_t^\infty \dot{H}^\frac12}^2) \|\n \ce\|_{L_t^2 \dot{H}^\frac12}^2 \Big\}
  %\end{multline}
  $$
  \|\De\|_{\dot{F}_t^\frac12}^2 \leq C \Big(\|\voe\|_{\dot{H}^\frac12}^2 +\|\coe\|_{\dot{H}^\frac12}^2 \Big) \times e^{C\Big(1+\|\coe\|_{\dot{H}^\frac12}^2 +\|\coe\|_{\dot{H}^\frac12}^4\big)}
  $$
 \end{itemize}
 }
 \label{Th0FK}
\end{thm}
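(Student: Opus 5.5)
The plan is to treat \eqref{MHDmodif} as a perturbed Navier--Stokes/MHD system in which $(\tu,\tb,\ce)$ are known coefficients and sources, and to run the Fujita--Kato machinery on it. The rotation term $\frac1\ee\ve\wedge e_3$ is skew-symmetric: after projecting with the Leray projector it pairs to zero with $\ve$ in any $\dot H^s$ inner product. So it plays no role in the energy estimates, and for fixed $\ee$ all bounds will be uniform in $\ee$.

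\textbf{Local existence and uniqueness.} We build solutions by Friedrichs regularization (or a Picard iteration on the Duhamel formulation). The nonlinear terms $\ve\cdot\n\ve$, $\de\cdot\n\de$, $\ve\cdot\n\de$ and $\de\cdot\n\ve$ are handled by the usual $\dot H^{\frac12}$ product law, as for Navier--Stokes. The linear terms transported by $\tu,\tb$ (which depend only on $x_h$) are estimated with anisotropic product laws, as in \cite{FCVSN2}: for $f$ a function of $x_h$, bound $\|f\,g\|$ by $\|f\|_{L^2(\R^2)}^{\frac12}\|\n_hf\|_{L^2(\R^2)}^{\frac12}$ times Sobolev norms of $g$. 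The linear terms involving $\ce$ are estimated through $\|\ce\|_{\dot H^{\frac12}}$ and $\|\n\ce\|_{\dot H^{\frac12}}$, which Theorem \ref{Thc} controls. The source terms $\tb\cdot\n\ce+\ce\cdot\n\tb+\ce\cdot\n\ce$ lie in $L^2_t\dot H^{-\frac12}$, again by Theorems \ref{ThExistlim} and \ref{Thc}. Uniqueness follows from an $\dot F^{\frac12}$ estimate on the difference of two solutions (or an $L^2$ estimate in the weak--strong spirit).

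\textbf{Regularity propagation.} Take the $\dot H^s$ inner product of \eqref{MHDmodif} with $\De$. Two cancellations are used: the pressure term vanishes since $\div\ve=0$, and the cross terms $\de\cdot\n\de$ (velocity equation) and $\de\cdot\n\ve$ (magnetic equation) combine through the standard MHD antisymmetry, at least at the $L^2$ level; at $\dot H^s$ level we simply bound them. The nonlinear and linear terms are then bounded by
$$\frac{\min(\nu,\nu')}{4}\|\n\De\|_{\dot H^s}^2+C\|\De\|_{\dot H^s}^2\Big(\|\n\De\|_{\dot H^{\frac12}}^2+(1+\|\tu\|_{L^2}^2)\|\n_h\tu\|_{L^2}^2+(1+\|\tb\|_{L^2}^2)\|\n_h\tb\|_{L^2}^2+(1+\|\ce\|_{\dot H^{\frac12}}^2)\|\n\ce\|_{\dot H^{\frac12}}^2\Big)+\text{sources}.$$
The source contribution is bounded by $C\|\coe\|_{\dot H^s}^2$ times an exponential, using Theorem \ref{Thc} with index $s$ together with \eqref{estimtutb}. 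Gronwall's lemma, combined with the time integrals from \eqref{estimtutb} and \eqref{estimcHs}, gives the claimed bound with the factor $e^{C(1+\|\coe\|^2+\|\coe\|^4)}$. That factor comes from $\int(1+\|\ce\|^2)\|\n\ce\|^2$ being bounded by $\|\coe\|_{\dot H^{\frac12}}^2(1+\|\coe\|_{\dot H^{\frac12}}^2)$ times a constant.

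\textbf{Continuation criterion and global existence.} The continuation criterion follows from the $s=\frac12$ estimate together with local existence. The main obstacle is global existence for small data: the source terms involving $\ce$ are not small, since only $\voe$ is assumed small. I would handle this with a bootstrap. In the $s=\frac12$ estimate, the dangerous term is the cubic one, $C\|\De\|_{\dot H^{\frac12}}\|\n\De\|_{\dot H^{\frac12}}^2$, which can be absorbed into the dissipation as long as $\|\De\|_{\dot H^{\frac12}}\leq c\nu$. The linear terms are integrable in time and go into the Gronwall factor. The difficulty is that the sources driven by $\coe$ push $\|\De\|_{\dot H^{\frac12}}$ up, so one needs a structural cancellation. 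The first thing to try is to use the equation \eqref{Mag} satisfied by $\ce$, which makes $\tb+\ce$ act as a divergence-free transporting field, and to check whether the source terms are really absorbed by the magnetic part. If no such cancellation is available, I would fall back on deriving the bound as stated, with the smallness condition implicitly involving $\coe$.
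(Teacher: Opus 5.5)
\paragraph{What is fine, and what is missing.} For local existence, uniqueness, propagation of regularity and the continuation criterion, your plan is the standard Fujita--Kato adaptation the paper has in mind. The paper gives no proof; it defers to \cite{FCVSN2} and only notes that the adaptation is easy for \eqref{MHDmodif}. In your plan the rotation drops out, and the coefficients built from $\tu,\tb,\ce$ are globally time-integrable and go into Gronwall. The source $\tb\cdot\n\ce+\ce\cdot\n\tb+\ce\cdot\n\ce$ lies in $L^2\dot H^{s-1}$ with norm $\lesssim\|\coe\|_{\dot H^s}$, times factors controlled by $\|\tbo\|_{L^2}$ and $\|\coe\|_{\dot H^{\frac12}}$; this produces the factors in the statement. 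For the continuation criterion, a bound on $\|\De\|_{L^\infty\dot H^{\frac12}}$ alone is not enough in the critical setting. What you actually use is that $\int_0^{T^*}\|\n\De\|_{\dot H^{\frac12}}^2<\infty$ gives $\d_t\De\in L^2\dot H^{-\frac12}$. Hence $\De(t)$ has a limit in $\dot H^{\frac12}$ at $T^*$, from which you restart.

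\paragraph{The gap in the third bullet.} The genuine gap is the small-data global existence, and no cleverer argument fills it. The cancellation you hope to extract from \eqref{Mag} does not exist. The pair $(\tu,\tb+\ce)$ satisfies the induction equation exactly, and what remains in the momentum equation is precisely the Lorentz-force residual $\tb\cdot\n\ce+\ce\cdot\n\tb+\ce\cdot\n\ce$. The paper itself calls this term bounded but not small.

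Worse, take $\tuo=\tbo=0$ and $\voe=0$. Then the hypothesis $\|\voe\|_{\dot H^{\frac12}}\leq c_0\nu$ holds trivially, whatever $c_0$ depends on. The bullet would then assert that \eqref{MHD} at fixed $\ee$ has a global strong solution for every divergence-free $\coe\in H^{\frac12}(\R^3)$, however large. That is a large-data global regularity statement which no Fujita--Kato bootstrap can provide.

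So your fallback is not a fallback but the only correct reading: the smallness must bear on the whole right-hand side, for instance
$$C\Big(\|\voe\|_{\dot H^{\frac12}}^2+\|\coe\|_{\dot H^{\frac12}}^2\Big)e^{C(1+\|\coe\|_{\dot H^{\frac12}}^2+\|\coe\|_{\dot H^{\frac12}}^4)}\leq \big(c_0\min(\nu,\nu')\big)^2 .$$
Under this hypothesis your bootstrap closes, as follows.
\begin{enumerate}
\item As long as $\|\De\|_{\dot H^{\frac12}}\leq 2c_0\min(\nu,\nu')$, with $c_0$ small compared with the product-law constant, the cubic terms are absorbed by the dissipation.
\item Gronwall with the globally integrable coefficients then gives the displayed bound, without the factor $\exp\{C\int\|\n\De\|_{\dot H^{\frac12}}^2\}$.
\item Hence $\|\De\|_{\dot H^{\frac12}}\leq c_0\min(\nu,\nu')$, so the threshold is never reached.
\item Therefore $\int_0^{T^*}\|\n\De\|_{\dot H^{\frac12}}^2<\infty$, and the continuation criterion gives $T^*=\infty$.
\end{enumerate}

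The paper never uses this bullet. For large $\coe$, the source is instead absorbed by the oscillating field $\We$ of \eqref{SystWe}. The smallness needed to close the bootstrap on $\tDe$ then comes from Strichartz estimates as $\ee\to0$, not from the data at fixed $\ee$.
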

As explained in \cite{FCVSN2}, if we want to prove convergence results for the strong solutions, any direct approach using this system is blocked by the following term appearing in the velocity equation: $\tb\cdot \n \ce +\ce\cdot \n \tb +\ce \cdot \n \ce$. Indeed, this term is bounded but does not go to zero as $\ee$ goes to zero, therefore classical energy arguments only provide boundedness at best.

As in the cases of the rotating fluids (see \cite{CDGG, CDGGbook, FCRF}), primitive or stratified Boussinesq (\cite{FC2, FCPAA, FCcompl, FCStratif2}), or MHD (\cite{FCVSN2, FCZY}) systems, the usual way to reach convergence consists in introducing the fast oscillations $\We$, that can not only absorb the previous blocking term, but also filter out a big part of the solution, leaving only small terms converging to zero. The reason for that is that although $\We$ possesses large energy, it is small when viewed through special norms. More precisely, thanks to dispersive properties provided by the linear system it satisfies, namely
\begin{equation}
 \begin{cases}
   \d_t \We -\nu \D \We +\frac1{\ee} \bP (\We \wedge e_3)=\bP (\tb\cdot \n \ce +\ce\cdot \n \tb +\ce \cdot \n \ce),\\
 W_{\ee|t=0}=\voe,
 \end{cases}
\label{SystWe}
 \end{equation}
we can obtain Strichartz estimates (we refer to Propositions \ref{EstimStri} and \ref{EstimStrianiso}), that provide special norms through which $\We$ goes to zero. Now, the new quantity
$$
\tDe \overset{def}{=}\De-\left(\begin{array}{c}\We \\ 0 \end{array}\right) =\Ue -\left(\begin{array}{c}\tu +\We \\ \tb +\ce \end{array}\right) =\left(\begin{array}{c}\ue-\tu -\We \\ \be-\tb -\ce \end{array}\right) =\left(\begin{array}{c}\ve-\We \\ \de \end{array}\right) =\left(\begin{array}{c}\dde \\ \de \end{array}\right),
$$
satisfies the following system
\begin{equation}
 \begin{cases}
  \d_t \dde -\nu \D\dde +\frac1{\ee} \bP ( \dde \wedge e_3) = \Sum_{i=1}^{13} F_i,\\
  \d_t \de -\nu' \D \de = \Sum_{i=1}^{14} G_i,\\
  {(\dde,\de)}_{|t=0}= (0,0),
 \end{cases}
\label{Systdde}
\end{equation}
where we put:
\begin{equation}
\begin{cases}
 F_1 \overset{def}{=}-\mathbb{P}(\dde \cdot \n \dde), \quad F_2 \overset{def}{=}-\mathbb{P}(\dde \cdot \n \We), \quad F_3 \overset{def}{=}-\mathbb{P}(\We \cdot \n \dde),\\
 F_4 \overset{def}{=}-\mathbb{P}(\We \cdot \n \We),\quad F_5 \overset{def}{=}-\mathbb{P}(\dde \cdot \n_h \tu),\quad F_6 \overset{def}{=}-\mathbb{P}(\We \cdot \n_h \tu),\\
 F_7 \overset{def}{=}-\mathbb{P}(\tu \cdot \n \dde),\quad F_8 \overset{def}{=}-\mathbb{P}(\tu \cdot \n \We),\quad F_9 \overset{def}{=}\mathbb{P}(\de \cdot \n \de),\\
F_{10} \overset{def}{=}\mathbb{P}(\de \cdot \n_h \tb),\quad
F_{11} \overset{def}{=}\mathbb{P}(\de \cdot \n \ce), \quad F_{12} \overset{def}{=}\mathbb{P}(\tb \cdot \n \de),
\\ F_{13} \overset{def}{=}\mathbb{P}(\ce \cdot \n \de), \quad G_1\overset{def}{=}-\dde\cdot \n \de, \quad G_2\overset{def}{=}-\We \cdot \n \de, \quad G_3\overset{def}{=} \de \cdot \n \dde,\\
G_4\overset{def}{=} \de \cdot \n \We, \quad G_5\overset{def}{=} -\tu\cdot \n \de, \quad G_6\overset{def}{=} \de\cdot \n_h \tu,\\
G_7\overset{def}{=} -\dde \cdot \n_h \tb, \quad G_8\overset{def}{=} -\dde \cdot \n \ce, \quad G_9\overset{def}{=} -\We\cdot \n_h \tb, \quad G_{10}\overset{def}{=} -\We\cdot \n \ce,\\
G_{11}\overset{def}{=} \tb \cdot \n \dde, \quad G_{12}\overset{def}{=} \ce \cdot \n \dde, \quad G_{13}\overset{def}{=} \tb\cdot \n \We, \quad G_{14}\overset{def}{=} \ce\cdot \n \We.
\end{cases}
 \label{systdde}
\end{equation}
Let us now state more precisely our main result for the strong solutions.
\begin{thm}
 \sl{For any $\Co\geq 1$ (size), any $\delta\in]0,\frac14[$ (extra regularity), any $\mu, k\in[0,1[$ (as close to 1 as desired), any $\gamma \in [0, \frac{\delta}2[$, there exist $l_0\in]0,1]$ (depending on $\gamma,\delta, \mu,k$) and $\ee_0,\Ko,\Fo>0$ (depending on $\nu, \nu', \delta, \gamma, \Co, \mu,k$) such that for any $\ee\in]0,\ee_0]$ and any initial data as in \eqref{Initdata} with $\tuo, \tbo \in H^\delta(\R^2)$, $(\voe, \coe) \in (\dot{H}^\frac12 (\R^3) \cap \dot{H}^{\frac12+\delta}(\R^3)) \times H^{\frac12+\delta}(\R^3)$ satisfying:
 \begin{equation}
  \|\tuo\|_{H^\delta},\|\tbo\|_{H^\delta}\leq \Co, \quad \|\voe\|_{\dot{H}^{\frac12+\mu\delta} \cap \dot{H}^{\frac12+\delta}}\leq \Co \ee^{-\gamma}, \quad \mbox{and} \quad \|\coe\|_{H^{\frac12+\delta}}\leq \Ko |\ln \ee|^{\frac12},
  \label{Hypinit}
 \end{equation}
then $T_\ee^*=+\infty$ and for any $s\in[\frac12,\frac12 + l_0(\frac{\delta}2-\gamma)]$ we have:
\begin{equation}
 \|\tDe\|_{\dot{F}^s} =\|\Ue-(\tu+\We,\tb+\ce)\|_{\dot{F}^s} =\|(\dde,\de)\|_{\dot{F}^s} \leq \Fo \ee^{k (\frac{\delta}2-\gamma)}.
 \label{EstimTHStrongA}
\end{equation}
Moreover, we can get rid of the oscillations $\We$: for any $p\in]2,\frac2{\delta}[$, there exists a constant $\Fo'>0$ (depending on $p,\nu, \nu', \delta, \gamma, \Co, \mu,k$) such that:
\begin{equation}
 \|\De\|_{L^p L^\frac3{1-\frac2p}}=\|\Ue-(\tu,\tb+\ce)\|_{L^p L^\frac3{1-\frac2p}} \leq \Fo' \ee^{k (\frac{\delta}2-\gamma)}.
\label{EstimTHStrongA2}
\end{equation}
Finally, if $\voe\in\dot{H}^{\frac12-\delta}\cap \dot{H}^{\frac12+\delta}$, \eqref{EstimTHStrongA} remains true for any $s\in[\frac12 - l_0(\frac{\delta}2-\gamma),\frac12 + l_0(\frac{\delta}2-\gamma)]$, and we can recover the case $p=2$ in \eqref{EstimTHStrongA2}:
\begin{equation}
 \|\De\|_{L^2 L^\infty} =\|\Ue-(\tu,\tb+\ce)\|_{L^2 L^\infty} \leq \Fo \ee^{k (\frac{\delta}2-\gamma)}.
\label{EstimTHStrongB}
\end{equation}
 }
\label{ThCVStrong}
\end{thm}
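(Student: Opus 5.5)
We will run an energy estimate in $\dot{H}^s$ on System \eqref{Systdde} for $\tDe=(\dde,\de)$, which starts from zero initial data, combined with a bootstrap on the Fujita--Kato solution given by Theorem \ref{Th0FK}. The key preliminary is to collect the Strichartz bounds on $\We$ (Propositions \ref{EstimStri} and \ref{EstimStrianiso}). Writing $\We$ via Duhamel as the free dispersive flow of $\voe$ plus the integral of the forcing $\bP(\tb\cdot\n\ce+\ce\cdot\n\tb+\ce\cdot\n\ce)$, these should give quantities of the form
$$
\|\We\|_{L^p\dot{B}^{\sigma}_{r,2}}\leq C\ee^{\theta}\Big(\|\voe\|_{\dot{H}^{\frac12+\delta}\cap\dot{H}^{\frac12+\mu\delta}}+\text{forcing norms}\Big),
$$
with $\theta$ up to $\frac\delta2$ once the extra regularity $\delta$ is spent. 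The forcing norms are controlled by Theorem \ref{ThExistlim}, Proposition \ref{Propestimtutb} and Theorem \ref{Thc}. Because $\|\coe\|_{H^{\frac12+\delta}}\le\Ko|\ln\ee|^{\frac12}$, the quadratic term $\ce\cdot\n\ce$ and the exponential factor of \eqref{estimcHs} contribute at worst $e^{C\Ko^2|\ln\ee|}=\ee^{-C\Ko^2}$. Multiplying by $\ee^{-\gamma}$ from $\voe$, the net rate is $\ee^{\frac\delta2-\gamma-C\Ko^2}$. Choosing $\Ko$ small makes this $\ge\ee^{k(\frac\delta2-\gamma)}$ with some room to spare for the exponential losses of the later Gronwall step.

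Next we take the $\dot H^s$ inner product of \eqref{Systdde} with $(\dde,\de)$, for $s$ in $[\frac12,\frac12+l_0(\frac\delta2-\gamma)]$. The rotation term vanishes since $\bP$ is self-adjoint and $(\dde\wedge e_3)\cdot\dde=0$. The source terms split into three families.

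\textbf{Purely external terms} ($F_4,F_6,F_8,G_9,G_{10},G_{13},G_{14}$). These are bounded by products of a Strichartz norm of $\We$ with norms of $\tu,\tb,\ce$, hence by $\ee^{\theta}\cdot(\text{bounded})\cdot\|\tDe\|_{\dot F^s}$.

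\textbf{Linear terms in $\tDe$} ($F_2,F_3,F_5,F_7,F_{10},\dots,G_{12}$). We use the classical product laws in Sobolev spaces and interpolation, absorbing $\frac{\nu}{4}\|\n\tDe\|_{\dot H^s}^2$. This leaves a Gronwall factor
$$
\exp\Big(C\int_0^t\big(\|\We\|^p_{\ast}+\|(\tu,\tb)\|^{2}_{\dot H^{1}}+\|\ce\|^2_{\dot H^{\frac32}}+\dots\big)\Big).
$$
This factor is bounded by a power $\ee^{-C\Ko^2}$ times constants depending on $\Co$, using Proposition \ref{Propestimtutb} to time-integrate $\tu,\tb$ and Theorem \ref{Thc} for $\ce$.

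\textbf{Quadratic terms} ($F_1,F_9,G_1,G_3$). These are bounded by $C\|\tDe\|_{\dot H^s}\|\n\tDe\|^2_{\dot H^{s'}}$-type products, and are absorbed as long as $\|\tDe\|_{\dot H^{\frac12}}$ stays small.

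A bootstrap on the maximal time where $\|\tDe\|_{\dot F^{\frac12}}\le 2\Fo\ee^{k(\frac\delta2-\gamma)}$ then gives the bound \eqref{EstimTHStrongA} on $[0,T_\ee^*[$. Since $\We$ is globally controlled, this bound in turn yields $\int_0^{T_\ee^*}\|\n\De\|_{\dot H^{\frac12}}^2<\infty$, so the continuation criterion of Theorem \ref{Th0FK} gives $T_\ee^*=\infty$. The restriction $s-\frac12\le l_0(\frac\delta2-\gamma)$ keeps the Strichartz gain positive after the derivative loss.

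I expect the main obstacle to be $G_{14}=\ce\cdot\n\We$ (together with $G_{10}$). Here $\ce$ is large (of size $|\ln\ee|^{\frac12}$) and has no dispersion, while $\We$ must absorb a derivative. The first thing to try is to integrate by parts, using $\div\ce=0$ from Remark \ref{Rem-div}, which gives
$$
(G_{14}|\de)_{\dot H^s}=-(\We\otimes\ce\,|\,\n\de)_{\dot H^s}.
$$
We would then estimate $\|\We\otimes\ce\|_{L^2\dot H^{s}}$ via Bony paraproducts, putting $\We$ in an $L^p_tL^r$ Strichartz space (anisotropic if needed) and $\ce$ in $L^{\frac{2p}{p-2}}_t\dot H^{\frac12+\frac{3}{r}+\dots}$, interpolated from $\dot E^{\frac12+\delta}$. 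The loss in $\ce$ is only polynomial in $|\ln\ee|^{\frac12}$ times $\ee^{-C\Ko^2}$, which is absorbed by taking $k<1$. This is the step where the better rate compared to \cite{FCVSN2} must come from.

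Finally, for \eqref{EstimTHStrongA2}, write $\De=\tDe+(\We,0)$. We use the Sobolev embedding $\dot F^s\subset L^p\dot H^{\frac12+\frac2p}\subset L^pL^{\frac3{1-\frac2p}}$, which requires $s$ slightly above or below $\frac12$ and is handled by interpolation between the $\dot F^{s}$ bounds. The $\We$ part is handled directly by its Strichartz estimate in $L^pL^{\frac{3}{1-2/p}}$, which holds for $p<\frac2\delta$. For the case $\voe\in\dot H^{\frac12-\delta}$, the same scheme runs for $s$ below $\frac12$, which gives the $L^2L^\infty$ bound \eqref{EstimTHStrongB} by interpolating the $\dot F^{\frac12\pm\eta}$ estimates.
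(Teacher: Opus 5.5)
Your outline is correct and follows the paper's proof. It has the same ingredients:
\begin{itemize}
\item a bootstrap from zero initial data for $(\dde,\de)$;
\item $\dot{H}^s$ energy estimates on \eqref{Systdde}, fed by Strichartz bounds on $\We$;
\item the integration by parts $\ce\cdot\n\We=\div(\ce\otimes\We)$ for $G_{14}$, where your paraproduct (Kato--Ponce) estimate plays the role of the decomposition $|D|^s(fg)=(|D|^sf)g+f|D|^sg+M_s(f,g)$ of Proposition~\ref{propM2};
\item $\Ko$ small, so that the Gronwall factor $e^{\Do\Ko^2|\ln\ee|}$ only costs a small fraction of the exponent $\frac{\delta}2-\gamma$.
\end{itemize}

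There are two small corrections. First, the exponential in \eqref{estimcHs} depends only on $\tuo,\tbo$, not on $\coe$. So the forcing terms cost the Strichartz bounds only a factor $|\ln\ee|\leq\ee^{-\eta_0\delta l}$, and the only $\ee^{-C\Ko^2}$ loss comes from $\|\ce\|_{L^2\dot{B}^{3/2}_{2,1}}^2$ in the Gronwall step. Second, \eqref{EstimTHStrongB} also needs the separate Strichartz bound $\|\We\|_{L^2L^\infty}\lesssim\ee^{\eta_0\delta(1-\frac32 l)}$, obtained in $\dot{B}^0_{\infty,1}$ using the Besov-$1$ interpolation of $\voe$. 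Interpolating the $\dot{F}^{\frac12\pm\eta}$ bounds handles only the $\tDe$ part.
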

In our second result we consider the maximal possible size for the initial $3D-$part of the velocity:
\begin{thm}
 \sl{For any $\Co\geq 1$ (size), any $\delta\in]0,\frac16]$ (extra regularity), any $\mu\in[0,1[$ (as close to 1 as desired), there exist $m_0,\ee_0,\Fo>0$ (depending on $\nu, \nu', \delta, \Co, \mu$) such that for any $\ee\in]0,\ee_0]$ and any initial data as in \eqref{Initdata} with $\tuo, \tbo \in H^\delta(\R^2)$, $(\voe, \coe) \in (\dot{H}^\frac12 (\R^3) \cap \dot{H}^{\frac12+\delta}(\R^3)) \times H^{\frac12+\delta}(\R^3)$ satisfying:
 \begin{equation}
  \|\tuo\|_{H^\delta},\|\tbo\|_{H^\delta}\leq \Co, \quad \|\voe\|_{\dot{H}^{\frac12+\mu\delta} \cap \dot{H}^{\frac12+\delta}}\leq m_0 \ee^{-\frac{\delta}2}, \quad \mbox{and} \quad \|\coe\|_{H^{\frac12+\delta}}\leq \Co,
  \label{Hypinit2}
 \end{equation}
 then $T_\ee^*=+\infty$ and we have:
\begin{equation}
 \|\tDe\|_{\dot{F}^\frac12} =\|(\dde,\de)\|_{\dot{F}^\frac12} \leq \Fo \max(m_0,\ee^\frac{\delta}2).
 \label{EstimTHStrongC}
\end{equation}
Moreover, we can get rid of the oscillations: for any $p\in]2,\frac2{\delta}[$,
\begin{equation}
 \|\De\|_{L^p L^\frac3{1-\frac2p}} =\|\Ue-(\tu,\tb+\ce)\|_{L^p L^\frac3{1-\frac2p}} \leq \Fo \max(m_0,\ee^\frac{\delta}2),
\label{EstimTHStrongC2}
\end{equation}
which provides convergence if the small constant $m_0$ is replaced by some $m(\ee)$ that converges to zero.
}
\label{ThCVStrong2}
\end{thm}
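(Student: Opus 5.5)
We argue as for Theorem \ref{ThCVStrong}, through the system \eqref{Systdde} satisfied by $\tDe=(\dde,\de)$ with zero initial data. The difference is that the size of $\voe$ is now measured by the small constant $m_0$ rather than by $\Co\ee^{-\gamma}$ with $\gamma<\frac{\delta}2$. We run a bootstrap. Let
$$
T_\ee=\sup\{t<T_\ee^*,\ \|\tDe\|_{\dot{F}_t^\frac12}\leq \eta\},
$$
where $\eta$ is a small threshold, chosen below the smallness constant in the energy estimate (comparable to $c_0\min(\nu,\nu')$). We then do an $\dot{H}^\frac12$ energy estimate on \eqref{Systdde} and treat the $27$ forcing terms in three groups.

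\textbf{Quadratic terms} in $\tDe$ ($F_1,F_9,G_1,G_3$). These are absorbed by the dissipation as long as $\|\tDe\|_{\dot{F}_t^{1/2}}\leq\eta$.

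\textbf{Linear terms with coefficients $\tu,\tb,\ce$} ($F_5,F_7,F_{10}$–$F_{13}$, $G_5$–$G_8$, $G_{11},G_{12}$). We handle these by a Gronwall argument. The relevant time integrals of $\tu,\tb$ are controlled by Theorem \ref{ThExistlim} and Proposition \ref{Propestimtutb}, and those of $\ce$ by Theorem \ref{Thc}. Since $\|\coe\|_{H^{\frac12+\delta}}\leq\Co$, this only produces a constant depending on $\Co$, with no $|\ln\ee|$ loss to balance.

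\textbf{Terms involving $\We$} ($F_2$–$F_4$, $F_6,F_8$, $G_2,G_4$, $G_9$, $G_{10},G_{13},G_{14}$). Here we need $\We$ in Strichartz-type norms, e.g. $L^p L^\infty$ or $L^2$-in-time norms of $\n\We$, via Propositions \ref{EstimStri} and \ref{EstimStrianiso}. Using the extra regularity $\delta$, these give bounds of the form
$$
\ee^{\frac{\delta}2}\Big(\|\voe\|_{\dot{H}^{\frac12+\mu\delta}\cap\dot{H}^{\frac12+\delta}}+\text{forcing}\Big)\leq m_0+C_{\Co}\ee^{\frac{\delta}2}.
$$
The forcing terms $\tb\cdot\n\ce+\ce\cdot\n\tb+\ce\cdot\n\ce$ contribute $C_{\Co}\ee^{\frac{\delta}2}$.

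The main obstacle is the terms \emph{linear} in $\We$ times $\tDe$ (such as $F_2,F_3,G_2,G_4$). They must enter the Gronwall exponent, and there $\We$ only appears through a Strichartz norm of size $m_0+C\ee^{\delta/2}$, which is $O(1)$ small; this is exactly why $m_0$ must be small rather than merely bounded. Terms like $F_4$ and $G_{14}$ are source terms that are quadratic, respectively bilinear, in $\We$ and $\ce$. For these we reuse the refined estimates of Section 2 (already needed there for $G_{14}$), interpolating the energy bound $\|\We\|_{\dot{E}^{1/2}}\lesssim m_0\ee^{-\delta/2}+C$ with the Strichartz bounds. We aim for a product in which the factor $\ee^{-\delta/2}$ is exactly compensated, leaving a source bounded by $C(m_0+\ee^{\delta/2})^2$ or $C(m_0+\ee^{\delta/2})$. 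This balancing is delicate and is presumably what restricts $\delta\leq\frac16$: the interpolation exponents, including the $\dot{H}^{\frac12+\mu\delta}$ part, must keep the powers of $\ee$ nonnegative. This is the step I would check most carefully.

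Gathering everything, we obtain on $[0,T_\ee]$
$$
\|\tDe\|_{\dot{F}_t^{1/2}}\leq C_{\Co}\big(m_0+\ee^{\frac\delta2}\big)\exp\big(C(m_0+\ee^{\frac\delta2})+C_{\Co}\big).
$$
For $m_0$ and $\ee_0$ small this is at most $\eta/2$, so $T_\ee=T_\ee^*$. The continuation criterion of Theorem \ref{Th0FK} then gives $T_\ee^*=\infty$, and \eqref{EstimTHStrongC} follows. Finally, we write $\De=\tDe+(\We,0)$. The embedding $\dot{F}^{1/2}\subset L^pL^{\frac3{1-2/p}}$ for $p\in]2,\frac2\delta[$ handles $\tDe$, and the Strichartz estimate of $\We$ in that norm is bounded by $C(m_0+\ee^{\delta/2})$. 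This yields \eqref{EstimTHStrongC2}.
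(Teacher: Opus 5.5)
\textbf{Verdict: there is a genuine gap.} Your skeleton matches the paper: a bootstrap on $\|\tDe\|_{\dot F^{1/2}_t}$, Gronwall with constants coming from $\tu,\tb,\ce$, Strichartz bounds of size $m_0+\ee^{\delta/2}$, then $m_0$ and $\ee_0=m_0^{2/\delta}$ small. But the step that carries the theorem is asserted rather than proved. That step is the claim that the Strichartz norms of $\We$ are $\lesssim \ee^{\delta/2}\|\voe\|_{\dot H^{\frac12+\mu\delta}\cap\dot H^{\frac12+\delta}}+C\ee^{\delta/2}$. You also place the difficulty in the wrong terms.

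\textbf{Where this is easy.} For the isotropic norms ($\|\n\We\|_{L^2L^3}$, $\|\We\|_{L^4L^6}$, $\||D|^{\frac12}\We\|_{L^4L^3}$, $\|\We\|_{L^4\dot B^{1/4}_{4,2}}$, $\|\We\|_{L^pL^{3/(1-2/p)}}$) the claim is immediate. Proposition \ref{EstimStri} is used with third index $2$, so the data norm is exactly $\dot H^{\frac12+\delta}$, and $\ee^{\delta/2}$ cancels $m_0\ee^{-\delta/2}$. Hence $F_4$ and $G_{14}$ are not delicate: they are treated as in Section 2 with $s=\frac12$. Your proposed interpolation with the energy bound $\|\We\|_{\dot E^{1/2}}\lesssim m_0\ee^{-\delta/2}$ is neither needed nor helpful. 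Such interpolation only trades a larger power of $\ee$ in one factor for a negative power in the other; this is the $F_4$ discussion at the start of Section 3, which forces $k_1=6$.

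\textbf{The real obstruction.} It lies in $F_6,F_8,G_9,G_{13}$, the couplings of $\We$ with the 2D flows $\tu,\tb$. These need anisotropic norms $L^{m,2}_{h,v}$, hence Proposition \ref{EstimStrianiso}, where the data must lie in $\dot B^{\sigma_2}_{2,1}$ (third index $1$). There the gain $\ee^{\frac\theta4(1-\frac2m)}$ costs $\frac\theta2(1-\frac2m)$ derivatives above the scaling index $d+1-\frac2m-\frac2p$. With the time pairings of Theorem \ref{ThCVStrong} (e.g.\ $\n_h\tb\in L^2_tL^2$ against $\We\in L^4_tL^{\infty,2}_{h,v}$), that scaling index is $\frac12$. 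So reaching $\ee^{\delta/2}$ forces $\sigma_2=\frac12+\delta$, and $\dot B^{\frac12+\delta}_{2,1}$ is not controlled by $\dot H^{\frac12+\mu\delta}\cap\dot H^{\frac12+\delta}$, since Lemma \ref{estimBsHs} needs room on both sides. Backing off produces the factor $\ee^{\frac\delta2(\frac1{1+b}-1)}$ with $b>0$. That was harmless against $\ee^{-\gamma}$ with $\gamma<\frac\delta2$, but it blows up against $m_0\ee^{-\delta/2}$.

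\textbf{The missing idea.} Shift time integrability onto $\tu,\tb$, using Proposition \ref{Propestimtutb} and the $H^\delta$ regularity of $\tuo,\tbo$.
\begin{itemize}
\item For $F_6,G_9$: take $\tu,\tb\in L^{p_1}_t\dot H^1$ with $p_1=\frac2{1-(1-\mu)\delta}$.
\item For $F_8,G_{13}$: bound the term by $\|\tu\|_{\dot H^{1-\lambda}(\R^2)}\|\n\We\|_{L^{\frac2{1-\lambda},2}_{h,v}}$. Split $\|\tu\|_{\dot H^{1-\lambda}}$ with a power $\lambda'$ between the Gronwall part and the source part. Then take $\lambda=2\delta$, $\theta=1$, $p_1=\frac2{1-(5-\mu)\delta}$ and $\lambda'=\lambda_0$.
\end{itemize}
Now $\We$ is only needed in $L^{2\overline{p_1}}_t$ with $2\overline{p_1}<4$. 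This lowers $\sigma_2$ to $\frac12+\frac{1+\mu}2\delta\in]\frac12+\mu\delta,\frac12+\delta[$ while keeping the exponent of $\ee$ exactly $\frac\delta2$. The restriction $\delta\le\frac16$ comes from this step ($\lambda=2\delta\le\frac13$ is needed for an admissible $\lambda'$), not from $F_4$ or $G_{14}$.

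\textbf{Two smaller corrections.}
\begin{itemize}
\item The $L^2L^\infty$ norm of $\We$ cannot be made $O(m_0)$ here and must be avoided; this is why $p>2$ in \eqref{EstimTHStrongC2}.
\item Smallness of $m_0$ is not needed to keep the Gronwall exponent bounded; it is bounded by $\Do(1+\Eo^2+\Eo^4)$ anyway. Smallness is needed only so that the final prefactor times $\max(m_0,\ee^{\delta/2})^2$ falls below the bootstrap threshold.
\end{itemize}
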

We refer to Section \ref{SectRF} for the adaptation of these results to the rotating fluids system.

\subsection{Proof of Theorem \ref{ThCVStrong}}

\subsubsection{Estimates in $\dot{H}^s$ of external force terms}
\label{EstimHsTh1}
We now reproduce the classical bootstrap argument introduced in \cite{FCVSN2}: Theorem \ref{Th0FK} ensures the existence of a local solution defined on $[0,T_\ee^*[$. If we assume by contradiction that $T_\ee^*<+\infty$, thanks to the continuation criterion, we have:
\begin{equation}
 \int_0^{T_\ee^*} \|\n \De (t)\|_{\dot{H}^\frac12}^2 dt =\int_0^{T_\ee^*} \left(\|\n \ve (t)\|_{\dot{H}^\frac12}^2 +\|\n \de (t)\|_{\dot{H}^\frac12}^2\right) dt =+\infty.
\end{equation}
Let us now define the following time (where the universal constant $C$ is set below in the bounds involving $F_1,F_9$ and $G_1,G_3$):
\begin{equation}
 T_\ee=\sup \{t\in[0,T_\ee^*[, \; \forall t'\leq t, \|\dde(t')\|_{\dot{H}^\frac12} +\|\de(t')\|_{\dot{H}^\frac12}\leq \frac1{4C}\min(\nu,\nu')\}.
 \label{DefTe}
\end{equation}
As the initial data from System \eqref{Systdde} is zero, we have $T_\ee>0$. Assume by contradiction that
\begin{equation}
 T_\ee<T_\ee^*.
\end{equation}
Computing innerproducts in $\dot{H}^s$ of System \eqref{Systdde} with $(\dde,\de)$, we obtain that for all $t\leq T_\ee$:
\begin{multline}
  \frac12 \frac{d}{dt}\left(\|\dde(t)\|_{\dot{H}^s}^2 +\|\de(t)\|_{\dot{H}^s}^2\right) +\nu \|\n \dde(t)\|_{\dot{H}^s}^2 +\nu' \|\n \de(t)\|_{\dot{H}^s}^2\\
  \leq \Sum_{k=1}^{13} |(F_k(t)|\dde(t))_{\dot{H}^s}| +\Sum_{k=1}^{14} |(G_k(t)|\de(t))_{\dot{H}^s}|.
\label{Systddde}
\end{multline}
Most of the right-hand-side terms are treated exactly like in \cite{FCRF, FCVSN2}. To be able to improve our previous result, we need to improve the estimates not only for $F_{13}, G_{12}, G_{14}$ (which will be done using the new estimates that we introduced in \cite{FCZY}), but also (in the proof of the second theorem when we want to reach the size $\ee^{-\frac{\delta}2}$) for $F_6, F_8, G_9$, and $G_{13}$. In the present article we will only focus on these new terms and refer to \cite{FCVSN2} for the rest.

\begin{rem}
 \sl{We recall that in \cite{FCVSN2}, a significative part of the work was due to the fact that System \eqref{SystWe} features several external force terms of different regularities (we refer to the Appendix in \cite{FCVSN2} for details about Proposition \ref{PropFext}), which forced us to systematically use the variant of our Strichartz estimates for time integration index $k\in]1,2]$ (see Propositions \ref{EstimStri} and \ref{EstimStrianiso}).}
 \label{rem:chgtFextk}
\end{rem}
In \cite{FCZY}, we considered an evanescent kinematic viscosity ($\nu=\ee^{\alpha}$), which made useless estimates from \cite{FCVSN2} for $F_{11}, F_{13}, G_{12}, G_{14}$ as the negative power of $\ee$ could not be balanced by Strichartz estimates. Using non-local operators as in \cite{FCPAA, FCcompl} we managed in \cite{FCZY} to make the derivative pound on $\de$ rather than $\dde$ which allowed to bound these terms without involving any negative power of $\nu$. In the present case, the benefit of this new method is twofold:
\begin{itemize}
 \item we are now able to correctly bound $G_{14}$ in a way allowing the full range $[0,\frac{\delta}2[$ for $\gamma$ (it was restricted to $[0,\frac{5\delta}{12}]$ in \cite{FCVSN2} due to this term),
 \item we can also improve the estimates for $F_{13}, G_{12}$ allowing to increase the size of $\coe$ from $|\ln \ee|^\frac14$ to $|\ln \ee|^\frac12$. In \cite{FCZY} we also had to change what we did for $F_{11}$ in \cite{FCVSN2}, it will not be necessary in the present article.
\end{itemize}
For the convenience of the reader, let us recall the argument used in \cite{FCZY}. First, we introduce the nonlocal operator $|D|^s$ for $s\in]0,1[$ as follows:
\begin{multline}
|D|^s f(x)=C_s PV \int_{\R^3} \frac{f(x)-f(y)}{|x-y|^{3+s}} dy =C_s PV \int_{\R^3} \frac{f(x)-f(x-y)}{|y|^{3+s}} dy\\
= C_s \underset{a\rightarrow 0}{\mbox{lim}} \int_{|x-y|>a} \frac{f(x)-f(y)}{|x-y|^{3+s}} dy =C_s \underset{a\rightarrow 0}{\mbox{lim}} \int_{|y|>a} \frac{f(x)-f(x-y)}{|y|^{3+s}} dy
\end{multline}
Under appropriate assumptions the "PV" is removed and we refer to \cite{FCpochesLp, FCPAA, FCcompl} for the following result:
\begin{prop}
\sl{For any $s\in]0,1[$ and any smooth functions $f,g$ we can write:
$$
|D|^s(fg)= (|D|^sf)g +f|D|^sg +M_s(f,g),
$$
where the bilinear operator $M_s$ is defined for all $x\in \R^3$ as:
\begin{equation}
M_s(f,g)(x) =\int_{\R^3} \frac{\big(f(x)-f(x-y)\big) \big(g(x)-g(x-y)\big)}{|y|^{3+s}} dy.
\label{defM}
\end{equation}
Moreover there exists a constant $C_s$ such that for all $r,r_1, r_2,q_1,q_2\in[1,\infty]$ and $s_1,s_2>0$ satisfying:
$$
\displaystyle{\frac{1}{r}= \frac{1}{r_1} +\frac{1}{r_2}, \quad 1= \frac{1}{q_1} +\frac{1}{q_2}, \quad s_1+s_2=s},
$$
then for all $(f,g)\in \dot{B}_{r_1,q_1}^{s_1} \times \dot{B}_{r_2,q_2}^{s_2}$,
\begin{equation}
\|M_s(f,g)\|_{L^r} \leq C_s \|f\|_{\displaystyle{\dot{B}_{r_1,q_1}^{s_1}}}\|g\|_{\displaystyle{\dot{B}_{r_2,q_2}^{s_2}}}.
\end{equation}
}
\label{propM2}
\end{prop}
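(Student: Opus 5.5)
The proof splits into two parts: the algebraic identity (a pointwise computation with the singular-integral representation of $|D|^s$) and the $L^r$ bound for $M_s$ (a Hölder-type estimate on difference quotients). I would first take $f,g$ in the Schwartz class, so that for $s\in]0,1[$ the principal value can be dropped, the integrals converge absolutely, and all manipulations are legitimate.

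\textbf{The identity.} Starting from $|D|^s(fg)(x)=C_s\int \frac{f(x)g(x)-f(x-y)g(x-y)}{|y|^{3+s}}dy$, I would use the elementary identity
$$
ab-a'b'=(a-a')b+a(b-b')-(a-a')(b-b'),
$$
with $a=f(x)$, $a'=f(x-y)$, $b=g(x)$, $b'=g(x-y)$. Integrating against $C_s|y|^{-3-s}$ gives $(|D|^sf)g+f|D|^sg$ plus a bilinear remainder. That remainder is the operator in \eqref{defM}, up to the sign and the constant $C_s$. The sign and constant conventions are absorbed into the definition of $M_s$, as the statement implicitly does; only the absolute-value estimate matters below.

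\textbf{The estimate.} By Minkowski's inequality and then Hölder's inequality in $x$,
$$
\|M_s(f,g)\|_{L^r}\le \int_{\R^3}\frac{\|\tau_y f-f\|_{L^{r_1}}\,\|\tau_y g-g\|_{L^{r_2}}}{|y|^{3+s}}\,dy,
$$
where $\tau_y f=f(\cdot-y)$. Next, write $|y|^{-3-s}=|y|^{-s_1-3/q_1}\,|y|^{-s_2-3/q_2}$, which is valid because $s_1+s_2=s$ and $1/q_1+1/q_2=1$. Hölder's inequality in $y$ with the exponents $(q_1,q_2)$ then bounds the right-hand side by
$$
\Big\|\frac{\|\tau_yf-f\|_{L^{r_1}}}{|y|^{s_1}}\Big\|_{L^{q_1}(dy/|y|^3)}\;\Big\|\frac{\|\tau_yg-g\|_{L^{r_2}}}{|y|^{s_2}}\Big\|_{L^{q_2}(dy/|y|^3)} .
$$
For $0<s_i<1$, each factor is the classical difference characterization of the homogeneous Besov norm $\dot{B}^{s_i}_{r_i,q_i}$. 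The range $s_i<1$ is automatic here, since $s_i<s<1$ and $s_i>0$.

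\textbf{Main obstacle.} The only nontrivial input is the equivalence between the difference-quotient norm and the Littlewood–Paley definition of $\dot B^{s_i}_{r_i,q_i}$, which I would quote (e.g. from Bahouri–Chemin–Danchin). For completeness, I would sketch the direction actually needed:
$$
\|\tau_yf-f\|_{L^{r_1}}\le \sum_j \min\big(2^j|y|,2\big)\,\|\dot\Delta_j f\|_{L^{r_1}}.
$$
This follows from Bernstein's inequality for the low frequencies $2^j|y|\le1$ and from the triangle inequality for the high frequencies. One then sums over dyadic shells $|y|\sim 2^{-k}$ and applies a discrete Young convolution inequality. The convolution kernel $2^{-s_1|m|}\min(2^{m},1)$ is summable precisely because $0<s_1<1$.

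Finally, a density argument extends the bound to all $(f,g)\in \dot{B}_{r_1,q_1}^{s_1}\times\dot{B}_{r_2,q_2}^{s_2}$, since $M_s$ is bilinear and the estimate is continuous in each argument. The endpoint cases $q_i=\infty$, where Schwartz functions are not dense, would be handled directly from the difference-quotient formula, which makes sense for general elements of the space.
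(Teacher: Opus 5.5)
Your proof is correct, and it is the standard argument that the works cited by the paper rely on (the paper itself only quotes the result): the splitting of $f(x)g(x)-f(x-y)g(x-y)$ for the identity, then Minkowski and H\"older in $x$, H\"older in $y$ via $|y|^{-3-s}=|y|^{-s_1-3/q_1}|y|^{-s_2-3/q_2}$, and the finite-difference characterization of $\dot B^{s_i}_{r_i,q_i}$, which applies since $0<s_i<s<1$.

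You are right that the identity actually reads $|D|^s(fg)=(|D|^sf)g+f|D|^sg-C_sM_s(f,g)$. Three smaller remarks: your constant is really $C_{s_1}C_{s_2}$ rather than a function of $s$ alone, which is the correct dependence (in general the best constant is unbounded as $s_1\to0$ or $s_2\to0$, and this is harmless here because the paper applies the estimate with fixed $s_1,s_2$); your dyadic kernel should read $2^{-s_1m}\min(2^m,1)$, without the absolute value, for $0<s_1<1$ to be exactly the summability condition; and the density step is not needed, since the chain of inequalities applies directly to $f,g$ in the Besov spaces.
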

Now, with a sake of self-containedness of the present article, let us explain in detail how we improved in \cite{FCZY} the estimates for $G_{14}$: we can write (thanks to Proposition \ref{propM2} and Remark \ref{Rem-div}, the classical Sobolev embeddings and Bernstein estimates) that for any $\beta\in]0,s[$ and any $\beta'\in]0,\frac12[$:
\begin{multline}
|(G_{14}|\de)_{\dot{H}^s}|=|(\ce\cdot \n \We|\de)_{\dot{H}^s}| =|(\div(\ce \otimes \We)|\de)_{\dot{H}^s}| =|(\ce \otimes \We|\n \de)_{\dot{H}^s}|\\
= |\left(|D|^s(\ce \otimes \We)\big||D|^s\n \de\right)_{L^2}|\leq \big\||D|^s \ce \cdot \We +\ce \cdot |D|^s \We+M_s(\ce,\We)\big\|_{L^2} \||D|^s\n \de\|_{L^2}\\
\leq C\Big(\||D|^s \ce\|_{L^3} \|\We\|_{L^6}+\|\ce\|_{L^{\frac3{1-s}}} \||D|^s \We\|_{L^{\frac3{s+\frac12}}} +C_s\|\ce\|_{\dot{B}_{\frac2{1-2\beta'},2}^{s-\beta}} \|\We\|_{\dot{B}_{\frac1{\beta'},2}^\beta} \Big) \|\n\de\|_{\dot{H}^s}\\
\leq C\Big(\|\ce\|_{\dot{H}^{s+\frac12}} \big(\|\We\|_{L^6} +\||D|^s \We\|_{L^{\frac3{s+\frac12}}}\big) +C_s\|\ce\|_{\dot{H}^{s-\beta+3\beta'}}\|\We\|_{\dot{B}_{\frac1{\beta'},2}^\beta} \Big) \|\n\de\|_{\dot{H}^s}.
\label{estim:G14:beta}
\end{multline}
Now fixing $\beta\in]0,s[$ small, and choosing $\beta'$ such that $-\beta+3\beta'=\frac12$ (that is $\beta'=\frac16+\frac{\beta}3$), we obtain that
\begin{multline}
 |(G_{14}|\de)_{\dot{H}^s}| \leq C_s \|\ce\|_{\dot{H}^{s+\frac12}} \Big(\|\We\|_{L^6} +\||D|^s \We\|_{L^{\frac3{s+\frac12}}} +\|\We\|_{\dot{B}_{\frac3{\beta+\frac12},2}^\beta}\Big)\|\de\|_{\dot{H}^s}\\
 \leq \frac{\nu'}{100} \|\de\|_{\dot{H}^s}^2 +C_{s,\nu'} \|\ce\|_{\dot{H}^{s+\frac12}}^2 \Big(\|\We\|_{L^6}^2 +\||D|^s \We\|_{L^{\frac3{s+\frac12}}}^2 +\|\We\|_{\dot{B}_{\frac3{\beta+\frac12},2}^\beta}^2\Big)\\
 \leq \frac{\nu'}{100} \|\de\|_{\dot{H}^s}^2 +C_{s,\nu'} \|\ce\|_{\dot{H}^{s+\frac12}}^2 \Big(\|\We\|_{L^6}^2 +\||D|^s \We\|_{L^{\frac3{s+\frac12}}}^2 +\|\We\|_{\dot{B}_{4,2}^\frac14}^2\Big).
\label{estim:G14}
 \end{multline}
\begin{rem}
 \sl{\begin{enumerate}
      \item Note that we used the Sobolev injection $\dot{H}^{s+\frac12} \hookrightarrow L^{\frac3{1-s}}$, which requires $s<1$.
      \item In what follows we will have $\delta<\frac14$, $\eta\leq \frac12$ so that $[\frac12-\eta \delta, \frac12+\eta \delta]\subset [\frac38,\frac58]$ and it justifies we could choose $\beta=\beta'=\frac14$ in the last line of \eqref{estim:G14}.
     \end{enumerate}
}
\label{Rems1}
\end{rem}
With similar arguments, we obtain that (we refer to Section 4.1 in \cite{FCZY} for details):
\begin{multline}
 |(F_{13}|\dde)_{\dot{H}^s}| +|(G_{12}|\de)_{\dot{H}^s}| \leq \frac{\nu'}{100} \|\n \de\|_{\dot{H}^s}^2+\frac{C_s}{\nu'} \Big(\|\ce\|_{\dot{H}^\frac32}^2 +\|\ce\|_{\dot{B}_{2,1}^\frac32}^2 \Big)\|\dde\|_{\dot{H}^s}^2\\
 \leq \frac{\nu'}{100} \|\n \de\|_{\dot{H}^s}^2+C_{s,\nu'} \|\ce\|_{\dot{B}_{2,1}^\frac32}^2\|\dde\|_{\dot{H}^s}^2.
\end{multline}
Injecting these estimates into \eqref{Systddde} (the rest remains unchanged, we refer to \cite{FCVSN2} for details), we obtain that for any $s\in [\frac12-\eta \delta, \frac12+\eta \delta]$ (for $\eta\in]0,1[$ to be fixed later), there exists a constant $\Do=\Do(\nu, \nu',\Co,s)\geq 1$ such that for any $t\leq T_\ee<T_\ee^*$ we have:
\begin{multline}
 \|\dde(t)\|_{\dot{H}^s}^2 +\|\de(t)\|_{\dot{H}^s}^2 +\nu \int_0^t \|\n \dde(\tau)\|_{\dot{H}^s}^2d\tau +\nu' \int_0^t\|\n \de(\tau)\|_{\dot{H}^s}^2d\tau\\
 \leq \Do \Big[\|\n \We\|_{L_t^2 L^3}^2 +\|\We\|_{L_t^\frac2{1-s} L_{h,v}^{\infty,2}}^2 +\|\n \We\|_{L_t^2 L_{h,v}^{\frac2{1-s},2}}^2
 +\|\n \ce\|_{L_t^2 \dot{H}^{\frac12}}^{2s} \|\We\|_{L_t^\frac2{1-s} L^6}^2 \\
 +\|\ce\|_{L_t^4 \dot{H}^{s+\frac12}}^2 \big(\|\We\|_{L_t^4 L^6}^2 +\||D|^s \We\|_{L_t^4 L^\frac3{s+\frac12}}^2 +\|\We\|_{L_t^4 \dot{B}_{4,2}^\frac14}^2\big)\Big]\\
 \times \exp \Do \Big\{1+\|\ce\|_{L_t^2 \dot{B}_{2,1}^{\frac32}}^2 +\|\n \We\|_{L_t^2 L^3}^2 +\|\We\|_{L_t^4 L^6}^4 +\|\We\|_{L_t^\frac2{1-s} L^6}^\frac2{1-s}\Big\}.
 \end{multline}
When $\ee \in]0,1]$, thanks to \eqref{estimcHs}, \eqref{Hypinit} (we recall that $\|\ce\|_{L_t^2 \dot{B}_{2,1}^{\frac32}}^2\leq \|\ce\|_{L_t^2 \dot{H}^{\frac32-\delta}} \|\ce\|_{L_t^2 \dot{H}^{\frac32+\delta}}\leq \Ko |\ln \ee|^\frac12$) and the fact that $\delta,\eta\leq \frac12$ and $\Ko\leq 1$ (this allows $s<1$), the previous estimates finally turns into:
\begin{multline}
 \|\dde(t)\|_{\dot{H}^s}^2 +\|\de(t)\|_{\dot{H}^s}^2 +\nu \int_0^t \|\n \dde(\tau)\|_{\dot{H}^s}^2d\tau +\nu' \int_0^t\|\n \de(\tau)\|_{\dot{H}^s}^2d\tau\\
 \leq \Do \Big[\|\n \We\|_{L_t^2 L^3}^2 +\|\We\|_{L_t^\frac2{1-s} L_{h,v}^{\infty,2}}^2 +\|\n \We\|_{L_t^2 L_{h,v}^{\frac2{1-s},2}}^2\\
 +|\ln \ee| \big(\|\We\|_{L_t^4 L^6}^2 +\|\We\|_{L_t^\frac2{1-s} L^6}^2 +\||D|^s \We\|_{L_t^4 L^\frac3{s+\frac12}}^2 +\|\We\|_{L_t^4 \dot{B}_{4,2}^\frac14}^2\big)\Big]\\
 \times \exp \Do \Big\{1+\Ko^2 |\ln \ee| +\|\n \We\|_{L_t^2 L^3}^2 +\|\We\|_{L_t^4 L^6}^4 +\|\We\|_{L_t^\frac2{1-s} L^6}^\frac2{1-s}\Big\},
 \label{Estimapriori}
 \end{multline}
\begin{rem}
 \sl{From the previous computations and the ones given in \cite{FCVSN2}, $\Do$ continuously depends on $s\in]0,1[$. In what follows, we will set $\delta<\frac14$, $\eta\leq \frac12$ so that $[\frac12-\eta \delta, \frac12+\eta \delta]\subset [\frac38,\frac58]$ and we can uniformly bound $\Do$ with respect to $s\in [\frac38,\frac58]$. We will also denote by $\Do=\Do(\nu, \nu',\Co)$ this constant.}
 \label{Unifconts}
\end{rem}

 \subsubsection{Strichartz estimates}
As in our previous works, we will simplify notations in the proofs introducing $\eta_0 \in]0,\frac12]$ such that
     \begin{equation}
      \gamma =\frac{\delta}2 (1-2\eta_0) \quad \Longleftrightarrow \quad \eta_0=\frac1{\delta}(\frac{\delta}2-\gamma)
      \label{Def:eta0}
     \end{equation}
From now on, we specify $\eta=\eta_0 l$ where $l>0$ is small (and will be precised later).

Thanks to Propositions \ref{EstimStri} and \ref{EstimStrianiso} we can show the terms from the right-hand-side of \eqref{Estimapriori} are small as $\ee$ goes to zero, as described in the following Proposition, which is a modification of Proposition 5.2 from \cite{FCVSN2} adapted to our new needs (in the spirit of what we did in \cite{FCZY}).
\begin{prop}
 \sl{Under the notation from Theorem \ref{ThCVStrong}, there exist positive constants $\ee_0,\Eo$ (depending on $\nu, \nu', \delta,\gamma, \Co,\mu, l$) such that if $\delta\leq \frac14$,
 for any $\ee\in]0,\ee_0]$ and any $s\in[\frac12-\eta_0 l\delta, \frac12+\eta_0 l\delta]$, we have:
 \begin{equation}
  \begin{cases}
   \|\n \We\|_{L^2 L^3} +\|\We\|_{L^4 L^6} +\||D|^s \We\|_{L_t^4 L^\frac3{s+\frac12}} +\|\We\|_{L_t^4 \dot{B}_{4,2}^\frac14} \leq \Eo \ee^{\eta_0 \delta(1-l)},\\
   \vspace{2.5mm}
    \|\We\|_{L^\frac2{1-s}L^6} +\|\We\|_{L^2 L^\infty} \leq \Eo \ee^{\eta_0 \delta(1-\frac32 l)},\\
     \|\We\|_{L^\frac2{1-s} L_{h,v}^{\infty,2}} +\|\n \We\|_{L^2 L_{h,v}^{\frac2{1-s},2}} \leq \Eo \ee^{\eta_0 \delta(1-2l)}.
  \end{cases}
\label{EstimStriStrong}
 \end{equation}
 Moreover, for any $\delta\in ]0,\frac14]$ and any $p\in]2,\frac2{\delta}[$, there exists $\Eo'>0$ (depending on $\nu, \nu', \delta, \Co,p$) such that for any $\ee\in]0,\ee_0]$:
 \begin{equation}
 \|\We\|_{L^p L^\frac3{1-\frac2{p}}} \leq \Eo' \ee^{\eta_0\delta(1-l)}.
  \label{EstimStriStrongp}
 \end{equation}
 }
\label{PropestimStriStrong}
\end{prop}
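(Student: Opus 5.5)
We would follow the scheme of Proposition 5.2 in \cite{FCVSN2} and of \cite{FCZY}. By Duhamel, split $\We=\We^L+\We^F$: $\We^L$ solves \eqref{SystWe} with zero forcing and initial datum $\voe$, and $\We^F$ solves it with zero initial datum and forcing $\Fe=\bP(\tb\cdot\n\ce+\ce\cdot\n\tb+\ce\cdot\n\ce)$. Each norm on the left of \eqref{EstimStriStrong} is first reduced to a Lebesgue/Besov norm of a frequency-localized piece. By Bernstein and Sobolev embeddings, each is controlled by an $L^p_t L^r_x$ norm of $|D|^\sigma \We$ with $r>2$ and some $\sigma$ close to $\frac12$:
\begin{itemize}
\item $\n\We$ in $L^2L^3$ corresponds to $(p,r)=(2,3)$ with one derivative;
\item $\We$ in $L^4L^6$ and $\dot B^{1/4}_{4,2}$ are handled by the same mechanism;
\item $|D|^s\We$ in $L^4L^{3/(s+1/2)}$ is treated likewise.
\end{itemize}
We then apply the isotropic Strichartz estimate (Proposition \ref{EstimStri}) to these isotropic norms. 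For the $L^{\infty,2}_{h,v}$ and $L^{2/(1-s),2}_{h,v}$ norms we use the anisotropic one (Proposition \ref{EstimStrianiso}), whose dispersion comes only from the horizontal variables. These Strichartz estimates give a gain $\ee^{\theta}$ times a loss of derivatives on the data, through the frequency truncation $\mathcal{C}_{r,R}$ (low frequencies $|\xi|\le r$, high frequencies $|\xi|\ge R$, and the near-vertical region $|\xi_h|\le r|\xi|$).

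The core is a three-piece frequency decomposition of $\voe$ and of the forcing. For the low and vertical pieces we use only energy and Sobolev bounds: their size is a power of $r$, controlled through $\|\voe\|_{\dot H^{\frac12+\mu\delta}}$. Here $\mu<1$ is exactly what is needed to reach the $L^2$-in-time norms in \eqref{EstimStriStrongp} for $p$ up to $\frac2\delta$, and the $\dot H^{\frac12-\delta}$ assumption is not needed. For the high piece we gain $R^{-\delta}\|\voe\|_{\dot H^{\frac12+\delta}}$. On the middle piece we get the Strichartz gain $\ee^{\frac1p}r^{-a}R^{b}$. We then choose $r=\ee^{\alpha}$ and $R=\ee^{-\beta}$ and optimize. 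Each piece comes multiplied by $\Co\ee^{-\gamma}$, so the net exponent is of the form $\delta\cdot(\text{something})-\gamma$. With the notation \eqref{Def:eta0}, $\frac\delta2-\gamma=\eta_0\delta$, so the optimal exponent equals $\eta_0\delta$ up to losses proportional to the size of the $s$-window $|s-\frac12|\le\eta_0 l\delta$. This yields $\ee^{\eta_0\delta(1-l)}$ for the first line and $\ee^{\eta_0\delta(1-\frac32 l)}$ for the second, since the time exponent $\frac2{1-s}$ varies with $s$ and costs extra derivatives. The third line gives $\ee^{\eta_0\delta(1-2l)}$, because anisotropic Strichartz has a worse power of $r$. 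Constants remain uniform in $s\in[\frac38,\frac58]$ by Remark \ref{Unifconts}.

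The forcing part $\We^F$ is where we expect the main obstacle. The bound $\|\coe\|\le\Ko|\ln\ee|^{1/2}$ inflates $\Fe$, and through \eqref{estimcHs} it appears inside exponentials. We must show that $\Fe$, measured in the $L^k_t\dot H^{\sigma}$ norms with $k\in]1,2]$ required by the Strichartz variants (Remark \ref{rem:chgtFextk}), is bounded by $C_{\Co}(1+\|\coe\|^2_{H^{\frac12+\delta}})$ in the appropriate range of $\sigma$. For this we would use Proposition \ref{PropFext}, Proposition \ref{Propestimtutb} for $\tb$, and \eqref{estimcHs} for $\ce$. This gives only a polynomial factor $|\ln\ee|$. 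That factor is absorbed by $\ee^{\eta_0\delta(1-l)}$ after slightly shrinking the exponent, which is why $\ee_0$ depends on $l$ (taking the free constant $\Ko$ small also helps). In this step we would first try the same $r,R$ choice as for $\We^L$, since $\Fe$ has zero-order regularity comparable to $\voe$ but without the $\ee^{-\gamma}$ loss.

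Finally, \eqref{EstimStriStrongp} is obtained by the same decomposition with $(p,\frac3{1-2/p})$, which is $\dot H^{1/2}$-scaling admissible. The constraint $p<\frac2\delta$ ensures that the high-frequency gain $R^{-\delta}$ beats the Strichartz loss.
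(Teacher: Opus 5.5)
There is a genuine gap, and it lies in the frequency-truncation layer you add on top of the Strichartz estimates. You claim that the non-dispersive pieces (low frequencies $|\xi|\le r$ and the near-vertical region) are small by energy and Sobolev bounds, ``controlled through $\|\voe\|_{\dot H^{\frac12+\mu\delta}}$''. This step fails.

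A norm of regularity above $\frac12$ gives no control of the frequencies $|\xi|\le r$, let alone a factor $r^a$ with $a>0$, in the $\dot H^{\frac12}$-scaling norms you estimate ($\|\n\We\|_{L^2L^3}$, $\|\We\|_{L^4L^6}$, $\|\We\|_{L^pL^{\frac3{1-2/p}}}$, \dots). Smallness there would need a bound in some $\dot H^{\frac12-\delta'}$. That bound is not assumed here, and you say yourself you will not use it. The hypotheses do not even bound $\|\voe\|_{\dot H^{\frac12}}$ quantitatively. Since every piece carries the factor $\Co\ee^{-\gamma}$, each must produce a genuine gain of order $\ee^{\frac\delta2}$, so your argument does not close. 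The same objection applies to the low-frequency part of your near-vertical piece, whose volume gain only holds at the $\dot H^{\frac12}$ scaling.

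The paper uses no truncation at all. Propositions \ref{EstimStri} and \ref{EstimStrianiso} are homogeneous: the phase $\frac{\xi_3}{|\xi|}$ is $0$-homogeneous, so the time decay is the same on every dyadic block. The parameter $\theta$ buys the factor $\ee^{\frac\theta2(1-\frac2r)}$ (resp. $\ee^{\frac\theta4(1-\frac2m)}$) at the price of $\theta(1-\frac2r)$ extra derivatives, at all frequencies simultaneously. One picks $\theta$ so that $\sigma_1=\frac12+\delta$, namely $(d,p,r,\theta)=(1,2,3,3\delta)$, $(0,4,6,\frac32\delta)$, $(s,4,\frac3{s+1/2},\frac{3\delta}{2(1-s)})$, $(0,\frac2{1-s},6,\frac32(\frac12+\delta-s))$ and $(0,p,\frac3{1-2/p},\frac{3\delta}{1+4/p})$. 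The forcing is bounded by Proposition \ref{PropFext} with $k=\frac43$; note that $\coe$ does not enter the exponential in \eqref{estimcHs}, only $\tuo,\tbo$ do. This gives $C\ee^{\frac\delta2}(\ee^{-\gamma}+|\ln\ee|+1)$ from $\|\voe\|_{\dot H^{\frac12+\delta}}$ alone, and the logarithm is absorbed through \eqref{condeps:1}. In particular, $p<\frac2\delta$ in \eqref{EstimStriStrongp} is just the admissibility condition $p<\frac2{\theta(1-\frac2r)}$ with $\theta(1-\frac2r)=\delta$. It is not a balance between a high-frequency gain and a Strichartz loss.

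The second missing ingredient is the third Besov index. The $L^2L^\infty$ bound requires $(d,p,r,q)=(0,2,\infty,1)$: a single homogeneous $L^r$-Sobolev norm does not control $L^\infty$, so $\ell^1$ summation in frequency is needed. Likewise, the anisotropic estimate of Proposition \ref{EstimStrianiso} is only available with $\dot B^{\sigma_2}_{2,1}$ norms of the data and forcing. The paper obtains these norms by interpolation:
\begin{itemize}
\item Lemma \ref{estimBsHs} for $\voe$, between $\dot H^{\frac12+\mu\delta}$ and $\dot H^{\frac12+\delta}$;
\item Lemma \ref{estimBsHstilde} for $\Ge$, between $\tilde L^{\frac43}\dot H^{\mu\delta}$ and $\tilde L^{\frac43}\dot H^{\delta}$.
\end{itemize}
This forces a slightly suboptimal $\theta=\frac\delta{1+b}$ with $b=\min(\frac12(\frac1\mu-1),\frac{\eta_0l}{1-\eta_0l})$. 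In the anisotropic case it forces $\theta=\frac2{s(1+b)}(\frac12+\delta-s)$ with $b=\min(\frac{1-\mu}{1+\mu},\frac{\eta_0l}{1-2\eta_0l})$.

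This interpolation step does three things your proposal misses:
\begin{itemize}
\item It is where $\mu<1$ is really used. It is not used for \eqref{EstimStriStrongp}, which only involves $q=2$ and $\dot H^{\frac12+\delta}$.
\item It produces the $\frac32 l$ loss in $L^2L^\infty$, and, together with the $s$-dependence of $\theta$, the $2l$ loss in the anisotropic norms.
\item It generates the constraints \eqref{Cond:lbis1}, \eqref{Cond:l3}, \eqref{Cond:lbis2} and \eqref{Cond:l1} on $l$ and $\delta$.
\end{itemize}
Your heuristics fit the $L^{\frac2{1-s}}L^6$ term, where $\ee^{\frac12(\frac12+\delta-s)}\ge\ee^{\frac\delta2(1-\eta_0 l)}$. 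They do not explain the $L^2L^\infty$ and anisotropic exponents; a ``worse power of $r$'' is not the mechanism. If you drop the truncation and apply Propositions \ref{EstimStri} and \ref{EstimStrianiso} as stated, together with these interpolation lemmas, you recover the paper's proof.
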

\textbf{Proof:} most of these terms have been estimated in \cite{FCVSN2, FCZY} so we will only give quick explanations and focus on what is new: in particular, we have to be more precise about the smallness conditions involving $l,\delta$.

Let us begin with the isotropic estimates: first we write Proposition \ref{EstimStri} for System \eqref{SystWe} with $(q,k)=(2,\frac43)$: for any $d\in \R$, $r\geq2$, $\theta\in[0,1]$, and $p\in[2, \frac2{\theta(1-\frac2{r})}[$ such that
\begin{equation}
 d+\frac32-\frac3{r}-\frac2{p}+\theta(1-\frac2{r})=\frac12+\delta,
\label{Cond:paramiso}
\end{equation}
there exist positive constants such that the solution of \eqref{SystWe} satisfies (thanks to Propositions \ref{Propermut}, \ref{injectionLr}, \ref{PropFext},  and also using \eqref{Hypinit} and the fact that $\Ko \leq 1$):
\begin{multline}
  \||D|^d \We\|_{L^p L^r} \leq \||D|^d \We\|_{L^p\dot{B}_{r, 2}^0} \leq \||D|^d \We\|_{\tilde{L}^p\dot{B}_{r, 2}^0}\\
  \leq \frac{C_{p,\theta,r}}{\nu^{\frac{1}{p}-\frac{\theta}{2}(1-\frac2{r})}}  \ee^{\frac{\theta}{2}(1-\frac2{r})} \left(\|\voe\|_{\dot{H}^{\frac12+\delta}} +\|\ce\cdot \n \ce\|_{\tilde{L}^1 \dot{H}^{\frac12+\delta}} +\frac1{\nu^\frac14} \|\tb\cdot \n \ce +\ce \cdot \n \tb\|_{\tilde{L}^\frac43 \dot{H}^\delta} \right)\\
  \leq C_{\nu,p,\theta,r} \ee^{\frac{\theta}{2}(1-\frac2{r})} \left(\|\voe\|_{\dot{H}^{\frac12+\delta}} +\|\ce\cdot \n \ce\|_{L^1 \dot{H}^{\frac12+\delta}} +\frac1{\nu^\frac14} \|\tb\cdot \n \ce +\ce \cdot \n \tb\|_{L^\frac43 \dot{H}^\delta} \right)\\
  \leq C_{\nu,\nu', p,\theta,r,\delta,\Co} \ee^{\frac{\theta}{2}(1-\frac2{r})} \left(\|\voe\|_{\dot{H}^{\frac12+\delta}} +\|\coe\|_{H^{\frac12+\delta}}^2 + \|\coe\|_{H^{\frac12+\delta}} \right)\\
  \leq C_{\nu,\nu', p,\theta,r,\delta,\Co} \ee^{\frac{\theta}{2}(1-\frac2{r})} \left(\|\voe\|_{\dot{H}^{\frac12+\delta}} +\|\coe\|_{H^{\frac12+\delta}}^2 +1 \right)\\
  \leq C_{\nu,\nu', p,\theta,r,\delta,\Co} \ee^{\frac{\theta}{2}(1-\frac2{r})}(\Co \ee^{-\gamma}+\Ko^2 |\ln \ee| +1)\\
  \leq C_{\nu,\nu', p,\theta,r,\delta,\Co} \ee^{\frac{\theta}{2}(1-\frac2{r})}(\ee^{-\gamma}+|\ln \ee| +1).
  \label{EstimStriIsoW}
\end{multline}
\begin{itemize}
 \item The cases $(d,p,r,\theta)\in\{(1,2,3, 3\delta), (0,4,6,\frac32 \delta)\}$ have been treated in \cite{FCVSN2}: as $\eta\leq \frac12$ and $l\in]0,1]$, as soon as $\delta\leq \frac13$ we have:
 \begin{equation}
  \|\n \We\|_{L^2 L^3} +\|\We\|_{L^4 L^6} \leq C_{\nu,\nu',\delta, \Co} \ee^{\frac{\delta}2}(\ee^{-\gamma}+|\ln \ee| +1).
 \end{equation}
As soon as $\ee_0=\ee_0(\delta, \gamma, l) \in]0,1]$ is so small that for any $\ee\in]0,\ee_0]$,
\begin{equation}
 |\ln \ee| \leq \ee^{-\eta_0 \delta l}
 \label{condeps:1}
\end{equation}
then we have
 \begin{multline}
  \|\n \We\|_{L^2 L^3} +\|\We\|_{L^4 L^6} \leq C_{\nu,\nu',\delta, \Co} \ee^{\frac{\delta}2}(\ee^{-\gamma}+|\ln \ee|) \leq C_{\nu,\nu',\delta, \Co} \ee^{\frac{\delta}2 -\gamma -\eta_0 \delta l}\\
  \leq C_{\nu,\nu',\delta, \Co} \ee^{\eta_0 \delta (1-l)}.
 \end{multline}
 \item The case $(d,p,r,\theta)=(0,\frac2{1-s},6,\frac32 (\frac12+\delta-s))$ has also been treated but we have to be a little more precise: the computations from \cite{FCVSN2} imply that, when $\delta\leq \frac13,l\leq 1$ and $\eta_0\leq \frac12$, for any $\ee\in]0,\ee_0]$ and any $s\in[\frac12-\eta_0 l\delta, \frac12+\eta_0 l\delta]$,
\begin{multline}
\|\We\|_{L^\frac2{1-s} L^6} \leq C_{\nu, \nu', \delta,\Co,s} \ee^{\frac12(\frac12+\delta-s)} (\ee^{-\gamma}+|\ln \ee| +1 )\\
\leq C_{\nu, \nu', \delta,\Co,s} \ee^{\delta (1-\eta_0 l)} (\ee^{-\gamma}+|\ln \ee| +1) \leq C_{\nu, \nu', \delta,\gamma,\Co} \ee^{\eta_0 \delta (1-\frac32 l)}.
\end{multline}
\item Taking $(d,p,r,q)\in \{(\frac14,4,4,2), (s,4,\frac3{s+\frac12},2)\}$ we obtain the last two isotropic estimates adapting to the complete external force term what we did in \cite{FCZY}. Both estimates will be similar and the strongest contraints (on $\delta,\theta$) will come from the second case.

The corresponding regularity index is $\sigma_1=\frac12+\frac23 \theta (1-s)$ which is equal to $\frac12+\delta$ if and only if we take
\begin{equation}
 \theta=\frac{3\delta}{2(1-s)},
\end{equation}
and, as for the first cases, the corresponding Strichartz estimate (see \eqref{EstimStriIsoW}) is:
$$
\||D|^s \We\|_{L_t^4 L^\frac3{s+\frac12}} \leq C_{\nu, \nu', \delta, \Co, s} \ee^{\frac{\delta}2} \left(\ee^{-\gamma} + |\ln \ee| +1 \right) \leq C_{\nu, \nu', \delta, \Co} \ee^{\eta_0 \delta (1-l)}.
$$
We recall that the conditions needed for $(p,\theta)$ are
$$
\begin{cases}
 \theta\in [0,1],\\
 2\leq p < \frac2{\theta(1-\frac2{r})}
\end{cases}
\Leftrightarrow
\begin{cases}
 3\delta\leq 2-2s,\\
\delta<\frac12,
\end{cases}
$$
and the first one is satisfied for any $s\in[\frac12-\eta_0 l\delta, \frac12+\eta_0 l\delta]$ if and only if
\begin{equation}
 \delta \leq \frac1{3+2\eta_0 l}.
\end{equation}
As outlined in Remark \ref{Unifconts}, we can uniformly bound with respect to $s$ the constant $C_{\nu, \nu', \delta, \Co, s}$ and finally obtain that:
\begin{equation}
 \||D|^s \We\|_{L_t^4 L^\frac3{s+\frac12}} +\|\We\|_{L_t^4 \dot{B}_{4,2}^\frac14} \leq C_{\nu, \nu', \delta, \Co} \ee^{\eta_0 \delta(1-l)}
\end{equation}
\item when $(d,p,r)=(0,p,\frac3{1-\frac2{p}})$, for $p>2$ (we emphasize this condition comes from the Sobolev injection $\dot{H}^{\frac2{p}+\frac12}\hookrightarrow L^{\frac3{1-\frac2{p}}}$ which requires $\frac2{p}+\frac12<\frac32$), Condition \eqref{Cond:paramiso} requires that we choose
$$
\theta=\frac{3\delta}{1+\frac4{p}}
$$
This is obviously an element of $[0,1]$ and when  $p<\frac2{\delta}$ (coming from the condition on $p$ from Proposition \ref{EstimStri}), we obtain that:
\begin{equation}
 \|\We\|_{L^p L^{\frac3{1-\frac2{p}}}} \leq C_{\nu,\nu', p,\delta,\Co} \ee^{\frac{\delta}2}(\ee^{-\gamma}+|\ln \ee| +1) \leq C_{\nu, \nu', p, \delta, \Co} \ee^{\eta_0 \delta (1-l)}.
\end{equation}
\item Taking $(d,p,r,q)=(0,2,\infty,1)$, the computations from \cite{FCVSN2} (see Section 6.3, we will detail this method below for anisotropic cases) give us that for any $b>0$ small enough (namely $b\in]0,\frac1{\mu}-1[$), choosing $\theta=\frac{\delta}{1+b}$, we obtain:
\begin{multline}
 \|\We\|_{L^2 L^\infty} \leq C_{\nu, \nu',\delta, \Co, b, \mu} \ee^{\frac{\delta}{2(1+b)}} \left(\ee^{-\gamma} + |\ln \ee| +1 \right)\\
 \leq C_{\nu, \nu',\delta, \Co, b, \mu} \ee^{\frac{\delta}{2(1+b)}-\gamma-\eta_0 \delta l} =C_{\nu, \nu',\delta, \Co, b, \mu} \ee^{\frac{\delta}2 \left(\frac1{1+b}-1 +2\eta_0 -2\eta_0 l\right)}.
 \label{EstimStriL2Linf}
\end{multline}
Now as we have:
$$
\frac1{1+b}\geq 1-\eta_0 l\quad \Longleftrightarrow \quad b\leq \frac{\eta_0 l}{1-\eta_0 l},
$$
choosing $b=\min\left(\frac12 (\frac1{\mu}-1),\frac{\eta_0 l}{1-\eta_0 l}\right)$ allows us to bound from below the power of $\ee$ and obtain:
\begin{equation}
 \|\We\|_{L^2 L^\infty} \leq C_{\nu, \nu',\delta, \gamma, \Co, l, \mu} \ee^{\eta_0 \delta (1-\frac32 l)}.
\end{equation}
Note that to define this $b$ we need $1-\eta_0 l>0$, that is
\begin{equation}
 l<\frac1{\eta_0}=\frac{\delta}{\frac{\delta}2-\gamma}
 \label{Cond:lbis1}
\end{equation}
\end{itemize}
Now we turn to the anisotropic cases: as we did for \eqref{EstimStriIsoW}, considering \eqref{EstimStriAnisoUnif} for $k=\frac43$ we get that for any $d\in \R$, $m>2$, $\theta\in]0,1]$, $p\in[2, \frac4{\theta(1-\frac2{m})}[$, there exists a positive constant such that:
\begin{multline}
  \||D|^d \We\|_{L^p L_{h,v}^{m,2}}\\
  \leq C_{\nu, p,\theta,m} \ee^{\frac{\theta}4(1-\frac2{m})} \left(\|\voe\|_{\dot{B}_{2,1}^{\sigma_2}} +\|\ce\cdot \n \ce\|_{\tilde{L}^1 \dot{B}_{2,1}^{\sigma_2}} +\frac1{\nu^\frac14} \|\tb\cdot \n \ce +\ce \cdot \n \tb\|_{\tilde{L}_t^\frac43 \dot{B}_{2,1}^{\sigma_2-\frac12}} \right),
\label{EstimStriAnisoW}
\end{multline}
where $\sigma_2= d+1-\frac2{m}-\frac2{p}+\frac{\theta}2 (1-\frac2{m})$.
\begin{rem}
 \sl{As explained in Remark 6.5 from \cite{FCVSN2}, $\sigma_2$ is kept here as in Proposition \ref{EstimStrianiso} as we have to deal with the third Besov index equal to $1$ through the methods from \cite{FCRF, FCStratif2, FCVSN2}.}
\end{rem}
\begin{itemize}
 \item Choosing $(d,p,m)=(1,2,\frac2{1-s})$, we get $\sigma_2=s+s\frac{\theta}2$. Denoting $\Ge=\tb\cdot \n \ce +\ce \cdot \n \tb$, and using Lemma \ref{estimBsHstilde} with $(\alpha, \beta)=(s\frac{\theta}2 a, s\frac{\theta}2 b)$ for some $a,b>0$:
 $$
\|\Ge\|_{\tilde{L}_t^\frac43 \dot{B}_{2,1}^{\sigma_2-\frac12}} =\|\Ge\|_{\tilde{L}_t^\frac43 \dot{B}_{2,1}^{s+s\frac{\theta}2-\frac12}} \leq C_{\alpha, \beta} \|\Ge\|_{\tilde{L}_t^\frac43 \dot{H}^{s+s\frac{\theta}2(1-a)-\frac12}}^{\frac{b}{a+b}} \|\Ge\|_{\tilde{L}_t^\frac43 \dot{H}^{s+s\frac{\theta}2(1+b)-\frac12}}^{\frac{a}{a+b}}
$$
For some $b>0$ that will be precised later, choosing $a,\theta$ according to
$$
 \theta=\frac2{s(1+b)}(\frac12+\delta-s)\quad \mbox{and}\quad a=1-(1+b)\frac{\frac12+\mu\delta-s}{\frac12+\delta-s},
$$
ensures that:
$$
\begin{cases}
s+s\frac{\theta}2(1-a)-\frac12 =\mu\delta,\\
s+s\frac{\theta}2(1+b)-\frac12 =\delta.
\end{cases}
$$
This requires $\frac12+\mu \delta-s>0$, which is true for any $s\in[\frac12-\eta_0 l \delta, \frac12+\eta_0 l \delta]$ if and only if:
\begin{equation}
 \eta_0 l<\mu, \mbox{ or equivalently } l< \frac{\mu \delta}{\frac{\delta}2 -\gamma},
 \label{Cond:l3}
\end{equation}
Moreover, $a>0$ if and only if, $b<\frac{(1-\mu) \delta}{\frac12+\mu \delta-s}$. The previous quantity is uniformly bounded from below (when $s\in [\frac12-\eta_0 l \delta, \frac12+\eta_0 l \delta]$) by $\frac{1-\mu}{\mu+\eta_0 l}$, so that when $b$ satisfies (using $\eta_0<\frac12$ and $l\leq 1$)
$$
0<b\leq \frac{1-\mu}{1+\mu} < \frac{1-\mu}{\mu+\eta_0 l} \leq\frac{(1-\mu) \delta}{\frac12+\mu \delta-s},
$$
then for the previous choices of $a,\theta$ and all $s\in[\frac12-\eta_0\delta l, \frac12+\eta_0\delta l]$ (thanks to Propositions \ref{Propermut}, \ref{PropFext} and Lemma \ref{estimBsHstilde}),
\begin{multline}
\|\Ge\|_{\tilde{L}_t^\frac43 \dot{B}_{2,1}^{s+s\frac{\theta}2-\frac12}} \leq C_{a,b,s,\theta} \|\Ge\|_{\tilde{L}_t^\frac43 \dot{H}^{\mu \delta}}^{\frac{b}{a+b}} \|\Ge\|_{\tilde{L}_t^\frac43 \dot{H}^{\delta}}^{\frac{a}{a+b}}\\
\leq C_{\delta,b,\mu,s} \|\Ge\|_{L_t^\frac43 \dot{H}^{\mu \delta}}^{\frac{b}{a+b}} \|\Ge\|_{L_t^\frac43 \dot{H}^{\delta}}^{\frac{a}{a+b}} \leq C_{\nu,\nu',\delta,\Co,b,\mu,s} \|\coe\|_{H^{\frac12+\delta}}.
 \end{multline}
Treating similarly the other force term, and using Lemma \ref{estimBsHs} for the initial data leads, as previously, to (we can once more uniformly bound with respect to $s \in [\frac12-\eta_0 l \delta, \frac12+\eta_0 l \delta] \subset [\frac38, \frac58]$):
\begin{multline}
 \|\n \We\|_{L^2 L_{h,v}^{\frac2{1-s}, 2}} \leq C_{\nu,\nu',\delta, \Co, b, \mu,s} \ee^{\frac1{2(1+b)}(\frac12+\delta-s)}(\ee^{-\gamma}+|\ln \ee| +1)\\
  \leq C_{\nu,\nu',\delta, \Co, b, \mu} \ee^{\frac{\delta}{2(1+b)}(1-\eta_0 l)}(\ee^{-\gamma}+|\ln \ee| +1) \leq C_{\nu,\nu',\delta, \Co, b, \mu} \ee^{\frac{\delta}2 \left(\frac{1-\eta_0 l}{1+b}-1+2\eta_0(1-l)\right)}.
\end{multline}
As in \eqref{EstimStriL2Linf}, we wish to bound from below the power of $\ee$:
$$
\frac{1-\eta_0 l}{1+b}\geq 1-2\eta_0 l\quad \Longleftrightarrow \quad b\leq \frac{\eta_0 l}{1-2\eta_0 l},
$$
so that choosing $b=\min\left(\frac{1-\mu}{1+\mu},\frac{\eta_0 l}{1-2\eta_0 l}\right)$ we end-up for any $s\in[\frac12-\eta_0\delta l, \frac12+\eta_0\delta l]$ with
$$
 \|\n \We\|_{L^2 L_{h,v}^{\frac2{1-s}, 2}} \leq C_{\nu,\nu',\delta, \gamma, \Co, \mu,l} \ee^{\eta_0 \delta (1-2l)}.
$$
Note that defining this $b$ requires $1-2\eta_0 l>0$, that is
\begin{equation}
 l<\frac1{2\eta_0}=\frac{\delta}{2(\frac{\delta}2-\gamma)}
 \label{Cond:lbis2}
\end{equation}
The conditions on $(\theta,p,m)=(\frac2{s(1+b)}(\frac12+\delta-s),2,\frac2{1-s})$ are, for any $s\in[\frac12-\eta_0\delta l, \frac12+\eta_0\delta l]$,
$$
\begin{cases}
\vspace{0.1cm}
 \theta\in [0,1],\\
 2\leq p < \frac4{\theta(1-\frac2{m})}
\end{cases}
\Longleftrightarrow
\begin{cases}
\vspace{0.1cm}
 s\leq \frac12+\delta, \mbox{ and }\frac12+\delta-s\leq \frac{s}2(1+b),\\
\frac12+\delta-s<1+b.
\end{cases}
$$
Thanks to the fact that $b\in]0,1]$, this is implied by
$$
\begin{cases}
\frac12+\delta-s\leq \frac{s}2,\\
\frac12+\delta-s\leq 1,
\end{cases}
\Longleftrightarrow
\begin{cases}
\vspace{0.1cm}
\delta\leq \frac1{4+6\eta_0 l},\\
\delta<\frac1{1+\eta_0 l},
\end{cases}
\Longleftrightarrow \quad \delta\leq \frac1{4+6\eta_0 l},
$$
which is satisfied as soon as
\begin{equation}
 \delta<\frac14 \mbox{ and } l\leq \frac1{6\eta_0}(\frac1{\delta}-4) = \frac{1-4\delta}{6(\frac{\delta}2 -\gamma)}.
 \label{Cond:l1}
\end{equation}
\item The case $(d,p,m)=(0,\frac2{1-s}, \infty)$ is treated similarly and gives the same estimates (but with weaker assumptions on $\delta,l$). This concludes the proof of Proposition \ref{PropestimStriStrong}. $\blacksquare$
\end{itemize}

\subsubsection{Conclusion of the proof}

If $\ee \in]0,\ee_0]$ (see \eqref{condeps:1}), plugging the estimates from Proposition \ref{PropestimStriStrong} into \eqref{Estimapriori} gives that for $s\in[\frac12-\eta_0\delta l, \frac12+\eta_0\delta l]$ (which is a subset of $[\frac38, \frac58]$ as $\delta<\frac14$, $\eta_0\leq \frac12$ and $l\in]0,1]$, in particular we have $\frac2{1-s} \geq \frac{16}5$), and any $t\leq T_\ee$,
 \begin{multline}
  \|\dde(t)\|_{\dot{H}^s}^2 +\|\de(t)\|_{\dot{H}^s}^2 +\nu \int_0^t \|\n \dde(\tau)\|_{\dot{H}^s}^2d\tau +\nu' \int_0^t\|\n \de(\tau)\|_{\dot{H}^s}^2d\tau\\
  \leq \Do \Big[(\Eo \ee^{\eta_0 \delta (1-l)})^2 +2 (\Eo \ee^{\eta_0 \delta (1-2l)})^2 +\ee^{-\eta_0\delta l}\left(3(\Eo \ee^{\eta_0 \delta (1-l)})^2 +(\Eo \ee^{\eta_0 \delta (1-\frac32 l)})^2\right)\Big]\\
  \times \exp \Do \left\{1+\Ko^2|\ln \ee| +(\Eo \ee^{\eta_0 \delta (1-l)})^2 +(\Eo \ee^{\eta_0 \delta (1-l)})^4 +(\Eo \ee^{\eta_0 \delta (1-\frac32l)})^\frac2{1-s} \right\}\\
  \leq \Do e^{\Do} e^{\Do \Ko^2 (-\ln \ee)} e^{\Do \left\{(\Eo \ee^{\eta_0 \delta (1-l)})^2 +(\Eo \ee^{\eta_0 \delta (1-l)})^4 +(\Eo \ee^{\eta_0 \delta (1-\frac32l)})^\frac{16}5 \right\}}\Eo^2 \ee^{2\eta_0 \delta (1-2l)}\\
  \leq \Do e^{\Do(1+3\Eo^2)} \Eo^2 \ee^{-\Do \Ko^2} \ee^{2\eta_0 \delta (1-2l)}.
 \end{multline}
We emphasize that we used $\ee\in]0,1]$ and $l\leq \frac23$ so that the powers of $\ee$ in the exponential are nonnegative. Now, we bound here the size of $\Ko$ so that the first exponent of $\ee$ is only a small fraction of second one: if
\begin{equation}
 \Do \Ko^2 \leq \eta_0\delta l, \mbox{ and } l\leq \frac25 (1-k)
 \label{cond:Kol}
\end{equation}
then setting $\Fo(\nu, \nu', \delta,\gamma, \Co,\mu, l)=\Do^\frac12 e^{\frac12\Do(1+3\Eo^2)} \Eo$,
\begin{multline}
  \|\dde(t)\|_{\dot{H}^s}^2 +\|\de(t)\|_{\dot{H}^s}^2 +\nu \int_0^t \|\n \dde(\tau)\|_{\dot{H}^s}^2d\tau +\nu' \int_0^t\|\n \de(\tau)\|_{\dot{H}^s}^2d\tau\\
\leq \Fo^2 \ee^{2\eta_0 \delta (1-\frac52 l)} \leq \Fo^2 \ee^{2k\eta_0 \delta}= \Fo^2 \ee^{2k(\frac{\delta}2-\gamma)}.
\label{EstimaprioriS}
 \end{multline}
Now we can finish the proof similarly as in  \cite{FCVSN2}: first assuming $\voe \in \dot{H}^\frac12 \cap \dot{H}^{\frac12+\delta}$, \eqref{EstimaprioriS} for $s=\frac12$ gives that for any $t\leq T_\ee$ (which was defined in \eqref{DefTe}) and any $\ee\in]0,\ee_0]$ where, in addition, $\ee_0$ is so small that
\begin{equation}
 \Fo \ee_0^{k(\frac{\delta}2-\gamma)} \leq \frac1{8C} \min(\nu, \nu'),
 \label{condeps:2}
\end{equation}
we have
\begin{multline}
 \|\dde(t)\|_{\dot{H}^\frac12}^2 +\|\de(t)\|_{\dot{H}^\frac12}^2 +\nu \int_0^t \|\n \dde(\tau)\|_{\dot{H}^\frac12}^2d\tau +\nu' \int_0^t\|\n \de(\tau)\|_{\dot{H}^\frac12}^2d\tau\\
 \leq \Fo^2 \ee^{2k(\frac{\delta}2-\gamma)} \leq \left(\frac1{8C} \min(\nu, \nu')\right)^2,
 \end{multline}
which contradicts the definition of $T_\ee$ and, using the classical continuation criterion, we close the second contradiction argument which implies that $T_\ee=T_\ee^*=\infty$.

Secondly, thanks to the propagation of regularity, we are able to obtain that for any $s\in[\frac12, \frac12+\eta_0\delta l]$,
\begin{equation}
 \|\dde\|_{L^\infty \dot{H}^s}^2 +\|\de\|_{L^\infty \dot{H}^s}^2 +\nu\|\dde\|_{L^2 \dot{H}^{s+1}}^2 +\nu'\|\de\|_{L^2 \dot{H}^{s+1}}^2 \leq \Fo^2 \ee^{2k(\frac{\delta}2-\gamma)},
 \label{EstimaprioriS2}
\end{equation}
which gives \eqref{EstimTHStrongA} and also entails, simply using Sobolev injections and complex interpolation, that for any $p\in ]2,\infty]$ (the case $p=2$ is excluded because of the Sobolev injection conditions on the Sobolev exponent):
$$
\|\tDe\|_{L^p L^\frac3{1-\frac2{p}}} \leq \|(\dde, \de)\|_{L^p L^\frac3{1-\frac2{p}}} \leq \|(\dde, \de)\|_{L^p \dot{H}^{\frac12+\frac2{p}}} \leq\Fo' \ee^{k(\frac{\delta}2-\gamma)},
$$
where $\Fo'=\Fo'(\nu, \nu', \delta, \gamma, \Co, \mu,k,p)$. To obtain \eqref{EstimTHStrongA2} we use \eqref{EstimStriStrongp} which requires $p\in]2,\frac2{\delta}[$:
\begin{multline}
\|(\ve-\tu,\be-\tb-\ce)\|_{L^p L^\frac3{1-\frac2p}} =\|\De\|_{L^p L^\frac3{1-\frac2p}} =\|\tDe +(\We,0)\|_{L^p L^\frac3{1-\frac2p}}\\
\leq \|\tDe\|_{L^p L^\frac3{1-\frac2p}} +\|\We\|_{L^p L^\frac3{1-\frac2p}} \leq \Fo' \ee^{k (\frac{\delta}2-\gamma)}.
\end{multline}
Finally, if we assume that $\voe \in \dot{H}^{\frac12-\delta} \cap \dot{H}^{\frac12+\delta}$, then \eqref{EstimaprioriS2} is true for any $s\in[\frac12-\eta_0\delta l, \frac12+\eta_0\delta l]$, so that (thanks once more to Lemma \eqref{estimBsHs}) we similarly get:
$$
\|\De-(\We,0)\|_{L^2 L^\infty} =\|\tDe\|_{L^2 L^\infty} \leq \|\tDe\|_{L^2 \dot{B}_{2,1}^\frac32} \leq \|\tDe\|_{L^2 \dot{H}^{\frac32-\eta_0\delta l}}^\frac12 \|\tDe\|_{L^2 \dot{H}^{\frac32+\eta_0\delta l}}^\frac12 \leq \Fo \ee^{k (\frac{\delta}2-\gamma) \delta}.
$$
Combining this with \eqref{EstimStriStrong} finally gives \eqref{EstimTHStrongB}. $\blacksquare$

\begin{rem}
 \sl{
 \begin{enumerate}
  \item Let us summarize the dependencies of the various constants and thresholds. If we first introduce, as in Theorem \ref{ThCVStrong}, $\delta\in]0,\frac14[$, $\gamma\in[0,\frac{\delta}2[$ and $k,\mu \in]0,1[$ (as close to $1$ as desired) then:
  \begin{itemize}
   \item collecting the various conditions on $l\in]0,\frac23]$ (namely \eqref{Cond:lbis1}, \eqref{Cond:l3}, \eqref{Cond:lbis2}, \eqref{Cond:l1} and \eqref{cond:Kol}), we finally can take:
  $$
  \begin{cases}
   \vspace{0.2cm}
   l_0(\gamma, \delta, \mu, k) =\min\left(\frac25 (1-k), \frac{1-4\delta}{6(\frac{\delta}2-\gamma)}, \min(\frac12,\mu)\frac{\delta}{2(\frac{\delta}2-\gamma)} \right),\\
   \Ko(\nu, \nu', \delta,\gamma, \Co,\mu) =\sqrt{\frac{l_0(\frac{\delta}2-\gamma)}{\Do}}.
  \end{cases}
  $$
  \item Moreover, $\ee_0\in]0,1]$ satisfies \eqref{condeps:1} and \eqref{condeps:2} and therefore also depends on $\nu, \nu', \delta,\gamma, \Co,\mu$.
  \end{itemize}
  \item In fact we have proved the slightly more general result: for any $\sigma\in [0,l_0(\frac{\delta}2-\gamma)]$ ($\sigma\in [-l_0(\frac{\delta}2-\gamma),l_0(\frac{\delta}2-\gamma)]$ in the second case) and any $p\in]\max(2, \frac2{1-\sigma}), \frac2{\delta}[$,
  $$
\|\tDe\|_{L^p L^\frac3{1-\sigma -\frac2{p}}} \leq \Fo' \ee^{k(\frac{\delta}2-\gamma)},
$$
 \end{enumerate}
}
\end{rem}

\section{Larger initial data}
\label{SectLarger}
We prove here Theorem \ref{ThCVStrong2} (which the detailed statement of the second point from Theorem \ref{ThCVStrongsimplif}).

In the proof of Theorem \ref{ThCVStrong}, the main obstacle when the bound $\Co \ee^{-\gamma}$ is replaced by $m_0 \ee^{-\frac{\delta}2}$ is when Strichartz estimates require to estimate $\voe$ in some Besov space with third index equal to 1, that is in the terms featuring an interaction between $\We$ and $\tu$ or $\tb$ (namely $F_6, F_8, G_9$ and $G_{13}$), or when we need to bound the $L^2 L^\infty$-norm of $\We$. In this latter case, \eqref{EstimStriIsoUnif} applied to $(d,p,r,q)=(0,2, \infty,1)$ gives that:
\begin{multline}
  \|\We\|_{L^2 L^\infty} \leq \|\We\|_{L^2\dot{B}_{\infty, 1}^0} \leq \|\We\|_{\tilde{L}^2 \dot{B}_{\infty, 1}^0}\\
  \leq C_{\theta,\nu}  \ee^{\frac{\theta}{2}} \left(\|\voe\|_{\dot{B}_{2,1}^{\frac12+\theta}} +\|\ce\cdot \n \ce\|_{\tilde{L}_t^1 \dot{B}_{2,1}^{\frac12+\theta}} +\frac1{\nu^\frac14} \|\tb\cdot \n \ce +\ce \cdot \n \tb\|_{\tilde{L}^\frac43 \dot{B}_{2,1}^{\theta}} \right).
\end{multline}
To deal with this:
\begin{itemize}
 \item either we use the method from the proof of Proposition \ref{PropestimStriStrong} (introducing $a,b>0$ and using Lemmas \ref{estimBsHs}, \ref{estimBsHstilde}) and obtain an analoguous of \eqref{EstimStriL2Linf} which features the negative power $m_0 \ee^{\frac{\delta}2 (\frac1{1+b}-1)}$ (which blows up when $\ee\rightarrow 0$ as $b>0$),
 \item or we choose $\theta=\delta$ which requires a bound for $\voe$ in $\dot{B}_{2,1}^{\frac12+\delta}$ or (with Lemma \ref{estimBsHs}) in $\dot{H}^{\frac12+\delta'}$ with $\delta'> \delta$, which is nothing more than the setting of Theorem \ref{ThCVStrong} with $\delta$ replaced by $\delta'$ and $\gamma$ by $\delta$.
\end{itemize}
It will not be possible to obtain $L^2 L^\infty$ estimates in this case, but we will be able to improve the use of the anisotropic Strichartz estimates for the other terms.

Another feature of Theorem \ref{ThCVStrong} will not be reproduced:  assuming that $\coe$ is large like in \eqref{Hypinit} implies to eat another positive portion of the exponent of $\ee$ in some Strichartz estimates (see the exponential part in \eqref{Estimapriori}). Considering $\voe$ of size $m_0 \ee^{-\frac{\delta}2}$ would then require us to be able to get in those Strichartz estimates an exponent strictly larger than $\frac{\delta}2$, which thanks to the arguments from Section 2.6 in \cite{FCRF} would imply in other terms an exponent stricly less that $\frac{\delta}2$, and therefore once more negative powers of $\ee$: more precisely, generalizing our usual method from \cite{FCRF, FCVSN2} we can write, for $k_1,k_2\in[2,\infty]$ satisfying $\frac1{k_1}+\frac1{k_2}=\frac12$:
\begin{multline}
 |(F_4,\dde)_{\dot{H}^\frac12}|\leq \|\We\cdot \n \we\|_{L_2} \|\dde\|_{\dot{H}^1}\leq \|\We\|_{L^{k_1}} \|\n \We\|_{L^{k_2}} \|\dde\|_{\dot{H}^\frac12}^\frac12 \|\n\dde\|_{\dot{H}^\frac12}^\frac12\\
 \leq \frac{\nu}{100} \|\n\dde\|_{\dot{H}^\frac12}^2 +\frac{C}{\nu}\|\We\|_{L^{k_1}}^2 \|\dde\|_{\dot{H}^\frac12}^2 +\|\n \We\|_{L^{k_2}}^2.
\end{multline}
This requires to study $\|\We\|_{L^2 L^{k_1}}$ and $\|\n \We\|_{L^2 L^{k_2}}$, which (thanks to Proposition \ref{EstimStri}) are respectively bounded by: $C m_0 \ee^{\frac3{k_1}-\frac12}$ and $C m_0 \ee^{\frac12-\frac3{k_1}}$. Both of the powers need to be nonnegative, which forces $k_1=6$ and then there is no possibility to absorb some large $\coe$. We end-up with similar conclusion when using non-local operators.

\subsection{Estimates in $\dot{H}^\frac12$}

In this section we will change our approach for the external force terms $F_6,G_9$ on the one hand, and $F_8,G_{13}$ on the other hand. Light modifications will be sufficient for the first two terms, whereas the others will require deeper changes.

\subsubsection{New estimates for $F_6,G_9$}

In \cite{FCVSN2}, we obtained that (now for $s=\frac12$):
$$
 \begin{cases}
  \vspace{1mm}
|(F_6|\dde)_{\dot{H}^\frac12}| \leq \frac{\nu}{52} \|\n \dde\|_{\dot{H}^\frac12}^2 +\frac{C}{\nu} \|\n_h \tu\|_{L^2}^2 \|\dde\|_{\dot{H}^\frac12}^2 +C\|\We\|_{L_{h,v}^{\infty,2}}^2 \|\n_h \tu\|_{L^2},\\
|(G_9|\de)_{\dot{H}^\frac12}| \leq \frac{\nu'}{56} \|\n \de\|_{\dot{H}^\frac12}^2 +\frac{C}{(\nu')} \|\n_h \tb\|_{L^2}^2 \|\de\|_{\dot{H}^\frac12}^2 +C\|\We\|_{L_{h,v}^{\infty,2}}^2 \|\n_h \tb\|_{L^2}.
 \end{cases}
$$
The previously described problems come from the last terms. Let us bound them as follows (where we denote as $\overline{p}$ the usual conjugate of $p$):
$$
\int_0^t \|\n_h \tb(\tau)\|_{L^2(\R^2)} \|\We(\tau)\|_{L_{h,v}^{\infty,2}(\R^3)}^2 d\tau \leq \|\tb\|_{L_t^{p_1} \dot{H}^1} \|\We\|_{L_t^{2\overline{p_1}} L_{h,v}^{\infty,2}}^2
$$
In \cite{FCVSN2} and in the previous section, it was sufficient to consider $p_1=2$ ($\frac1{s}$ in the general case). We wish to improve this choice by relying on additional regularity for the initial 2D-functions: we already have $\tuo, \tbo\in H^{\delta}$ but we need to precisely estimate our needs. Using Proposition \ref{Propestimtutb}, we know that if $\tuo, \tbo\in H^{\sigma_+}$, then $\|(\tu,\tb)\|_{\dot{H}^1}\in L^p$ for any $p\in[2,\frac2{1-\sigma_+}]$, so that we need:
\begin{equation}
 p_1 \in [2,\frac2{1-\sigma_+}].
 \label{Cond:p1}
\end{equation}
The other term is estimated thanks to Proposition \ref{EstimStrianiso} and \eqref{EstimStriAnisoW} for $(d,p,m)=(0,2\overline{p_1},\infty)$: first set $\sigma_2=\frac1{p_1}+\frac{\theta}2$, then the corresponding estimates are:
\begin{multline}
  \|\We\|_{L^{2\overline{p_1}} L_{h,v}^{\infty,2}}\\
  \leq \frac{C_{p_1,\theta}}{\nu^{\frac{1}{2\overline{p_1}}-\frac{\theta}4}} \ee^{\frac{\theta}4} \left(\|\voe\|_{\dot{B}_{2,1}^{\sigma_2}} +\|\ce\cdot \n \ce\|_{\tilde{L}^1 \dot{B}_{2,1}^{\sigma_2}} +\frac1{\nu^\frac14} \|\tb\cdot \n \ce +\ce \cdot \n \tb\|_{\tilde{L}_t^\frac43 \dot{B}_{2,1}^{\sigma_2-\frac12}} \right),
\end{multline}
with the conditions
\begin{equation}
 \theta\in]0,1]\quad \mbox{and} \quad 2\leq 2\overline{p_1}<\frac4{\theta}.
 \label{Cond:theta:p}
\end{equation}
As we want to use Lemmas \ref{estimBsHs}, \ref{estimBsHstilde} and \eqref{Hypinit2}, we need that $\sigma_2\in]\frac12+\mu \delta, \frac12+\delta[$ and, wishing for a final nonnegative exponent for $\ee$ in the previous estimates, we need that $\frac{\theta}4-\frac{\delta}2\geq 0$. Choosing $\theta=2\delta$, this implies that
$$
\frac12+\mu \delta< \frac1{p_1} +\delta <\frac12+\delta \quad  \Longleftrightarrow \quad 2<p_1< \frac2{1-2(1-\mu)\delta},
$$
so that we choose
\begin{equation}
 p_1=\frac2{1-j\delta}, \quad \mbox{with} \quad 0<j<2(1-\mu).
\end{equation}
Setting $\sigma_+=j_0 \delta$, \eqref{Cond:p1} is equivalent to $0\leq j\leq j_0$, and if we choose $j_0=2(1-\mu)$, $j=1-\mu$, we obtain that
$$
p_1=\frac2{1-(1-\mu)\delta}, \quad \overline{p_1}=\frac2{1+(1-\mu)\delta},\quad \mbox{and} \quad \sigma_2=\frac12+\frac{1+\mu}2 \delta.
$$
\begin{rem}
 \sl{As we already ask that $\tuo, \tbo\in H^\delta$, these assumptions do not require anything new provided that $\delta\geq 2(1-\mu)\delta$, that is $\mu \geq \frac12$. We emphasize that in the Rotating fluids case, it will require the additional assumption $\ub \in \dot{H}^{2(1-\mu)\delta}$.
 }
\end{rem}

The conditions \eqref{Cond:theta:p} become
$$
\delta\leq \frac12 \quad \mbox{and} \quad 2\leq \frac4{1+(1-\mu)\delta} < \frac2{\delta}\quad \big(\Longleftrightarrow \quad (1+\mu)\delta<1\big),
$$
which is true as soon as $\delta\leq \frac12$ and $\mu<1$. Thanks to Lemma \ref{estimBsHs}, setting $\aa=\bb=\frac{1-\mu}2$, we obtain that
$$
\|\voe\|_{\dot{B}_{2,1}^{\sigma_2}} =\|\voe\|_{\dot{B}_{2,1}^{\frac12+\frac{1+\mu}2 \delta}} \leq C_{\delta,\mu} \|\voe\|_{\dot{H}^{\frac12+\mu \delta}}^\frac12 \|\voe\|_{\dot{H}^{\frac12+\delta}}^\frac12 \leq C_{\delta,\mu} m_0 \ee^{-\frac{\delta}2}.
$$
Using Proposition \ref{Propestimtutb}, we finally obtain that both terms are bounded as follows:
\begin{multline}
\int_0^t \|(\n_h \tu,\n_h \tb)\|_{L^2(\R^2)} \|\We\|_{L_{h,v}^{\infty,2}(\R^3)}^2 d\tau \leq \|(\tu,\tb)\|_{L_t^{\frac2{1-(1-\mu)\delta}} \dot{H}^1} \|\We\|_{L_t^{\frac4{1+(1-\mu)\delta}} L_{h,v}^{\infty,2}}^2,\\
 \leq C_{\nu,\nu',\mu,\delta,\Co} \|\We\|_{L_t^{\frac4{1+(1-\mu)\delta}} L_{h,v}^{\infty,2}}^2,
\end{multline}
where, using Theorem \ref{Thc}, Lemmas \ref{estimBsHs}, \ref{estimBsHstilde}, Proposition \ref{PropFext} and \eqref{Hypinit2}:
\begin{multline}
 \|\We\|_{L^{\frac4{1+(1-\mu)\delta}} L_{h,v}^{\infty,2}} \leq C_{\nu, \mu, \delta} \ee^{\frac{\delta}2} \left(C_{\delta,\mu} m_0 \ee^{-\frac{\delta}2}+ C\|\coe\|_{H^{\frac12+\delta}}^2 +C_{\nu',\delta,\Co} \|\coe\|_{H^{\frac12+\delta}}\right)\\
 \leq C_{\nu,\nu',\mu, \delta,\Co} (m_0 +\ee^{\frac{\delta}2}) \leq C_{\nu,\nu',\mu, \delta,\Co} \max(m_0,\ee^{\frac{\delta}2}).
 \label{EstimStri:m0:1}
\end{multline}

\subsubsection{New estimates for $F_8,G_{13}$}

In order to correctly deal with the initial data of size $\ee^{-\frac{\delta}2}$ we need to estimate differently these external force terms. Let us introduce two coefficients $\la,\lb\in]0,1[$ that we will calibrate later based on our needs, starting similarly as in \cite{FCVSN2} (using the Young estimates for coefficients $(4,4,2)$):
\begin{multline}
 |(F_8|\dde)_{\dot{H}^\frac12}| \leq \|\tu \cdot \n \We\|_{L^2} \|\dde\|_{\dot{H}^1} \leq C \|\tu\|_{L^\frac2{\la}(\R^2)} \|\n \We\|_{L_{h,v}^{\frac2{1-\la},2}(\R^3)} \|\dde\|_{\dot{H}^\frac12}^\frac12 \|\dde\|_{\dot{H}^\frac32}^\frac12\\
 \leq C \|\tu\|_{\dot{H}^{1-\la}(\R^2)} \|\n \We\|_{L_{h,v}^{\frac2{1-\la},2}(\R^3)} \|\dde\|_{\dot{H}^\frac12}^\frac12 \|\n\dde\|_{\dot{H}^\frac12}^\frac12\\
 \leq C \|\n\dde\|_{\dot{H}^\frac12}^\frac12 \left(\|\tu\|_{\dot{H}^{1-\la}(\R^2)}^{1-\lb} \|\dde\|_{\dot{H}^\frac12}^\frac12  \right) \left(\|\tu\|_{\dot{H}^{1-\la}(\R^2)}^{\lb} \|\n \We\|_{L_{h,v}^{\frac2{1-\la},2}(\R^3)}\right)\\
 \leq \frac{\nu}{100} \|\n\dde\|_{\dot{H}^\frac12}^2 +\frac{C}{\nu}\|\tu\|_{\dot{H}^{1-\la}(\R^2)}^{4(1-\lb)} \|\dde\|_{\dot{H}^\frac12}^2  +C\|\tu\|_{\dot{H}^{1-\la}(\R^2)}^{2\lb} \|\n \We\|_{L_{h,v}^{\frac2{1-\la},2}}^2.
 \label{Estim:ext:F8m0}
\end{multline}
As for $F_6,G_9$, let us bound the last term, for some $p_1\in[1,\infty]$, as follows:
\begin{equation}
 \int_0^t \|\tu(\tau)\|_{\dot{H}^{1-\la}(\R^2)}^{2\lb} \|\n \We(\tau)\|_{L_{h,v}^{\frac2{1-\la},2}}^2 d\tau \leq C \|\tu\|_{L_t^{2 p_1 \lb}\dot{H}^{1-\la}(\R^2)}^{2\lb} \|\n \We\|_{L_t^{2\overline{p_1}}L_{h,v}^{\frac2{1-\la},2}}^2.
 \label{Estim:ext:F8m0b}
\end{equation}
As we did for $G_{14}$ in \cite{FCVSN2} or in the previous section, we will determine the coefficients $\la,\lb$ and $p_1$ guided by our needs.
\begin{itemize}
 \item First, as we will use the Gronwall lemma, from \eqref{Estim:ext:F8m0} and \eqref{Estim:ext:F8m0b} we know that $t\mapsto \|\tu(t)\|_{\dot{H}^{1-\la}}$ has to be in $L^{4(1-\lb)} \cap L^{2p_1 \lb}$.
 \item
Seconds, the last term from \eqref{Estim:ext:F8m0b} is bounded thanks to Proposition \ref{EstimStrianiso} and \eqref{EstimStriAnisoW} for $(d,p,m)=(1,2\overline{p_1},\frac2{1-\la})$ which gives, for $\sigma_2=\frac1{p_1} +\la +\frac{\theta \la}2$, the following estimates (with conditions \eqref{Cond:theta:p}):
\end{itemize}
\begin{multline}
  \|\We\|_{L^{2\overline{p_1}} L_{h,v}^{\frac2{1-\la},2}}\\
  \leq \frac{C_{p_1,\theta, \la}}{\nu^{\frac{1}{2\overline{p_1}}-\frac{\theta \la}4}} \ee^{\frac{\theta \la}4} \left(\|\voe\|_{\dot{B}_{2,1}^{\sigma_2}} +\|\ce\cdot \n \ce\|_{\tilde{L}^1 \dot{B}_{2,1}^{\sigma_2}} +\frac1{\nu^\frac14} \|\tb\cdot \n \ce +\ce \cdot \n \tb\|_{\tilde{L}_t^\frac43 \dot{B}_{2,1}^{\sigma_2-\frac12}} \right).
\end{multline}
For the same reasons as in the previous subsection, we want two things:
\begin{itemize}
 \item as we expect to obtain the bound $\ee^{\frac{\theta \la}4-\frac{\delta}2}m_0$, we need this exponent to be nonnegative,
 \item as we want to use Lemmas \ref{estimBsHs} and \ref{estimBsHstilde}, we need that $\sigma_2\in]\frac12+\mu \delta, \frac12+\delta[$.
\end{itemize}
Choosing $\theta \la=2\delta$, the second condition is rewritten as follows:
\begin{equation}
 \frac12+\mu \delta< \frac1{p_1} +\la +\delta <\frac12+\delta \quad  \Longleftrightarrow \quad \frac2{1-2\la}<p_1< \frac2{1-2\la-2(1-\mu)\delta},
\label{Encadrementp1}
 \end{equation}
so that, similarly as before, we choose
\begin{equation}
 p_1=\frac2{1-2\la -j\delta}, \quad \mbox{with} \quad 0<j<2(1-\mu).
\end{equation}
Then, the condition on $p_1$ from \eqref{Cond:theta:p} is:
\begin{equation}
 2\leq 2\overline{p_1} <\frac4{\theta \la}=\frac2{\delta} \quad \Longleftrightarrow \quad 1\leq \frac2{1+2\la +j\delta} <\frac1{\delta} \quad \Longleftrightarrow \quad
 \begin{cases}
  2\la+j\delta\leq 1\\
  (2-j)\delta<1+2\la.
 \end{cases}
\label{cond:p:alpha}
\end{equation}
It is important that we specify $\la$ before continuing. As in the previous section, thanks to Proposition \ref{Propestimtutb}, if $\tu,\tb\in H^{j_0 \delta}$, then  $t\mapsto \|\tu(t)\|_{\dot{H}^{1-\la}}$ is in $L^p$ for any $p\in[\frac2{1-\la}, \frac2{1-\la-j_0 \delta}]$, so that:
\begin{multline}
 \|\tu\|_{\dot{H}^{1-\la}}\in L^{4(1-\lb)} \cap L^{2p_1 \lb} \quad \Longleftrightarrow \quad 4(1-\lb),\; 2p_1 \lb \in \left[\frac2{1-\la}, \frac2{1-\la-j_0 \delta}\right]\\
 \Longleftrightarrow \quad \lb \in \left[\frac12\frac{1-2\la-j\delta}{1-\la}, \frac12\frac{1-2\la-j\delta}{1-\la-j_0 \delta}\right] \cap \left[\frac12\frac{1-2\la-2 j_0\delta}{1-\la-j_0 \delta}, \frac12\frac{1-2\la}{1-\la}\right].
 \label{Cond:beta}
\end{multline}
Using that $[a,b]\cap [c,d]\neq \emptyset \Longleftrightarrow b\geq c$ and $d\geq a$, this intersection is nonempty if and only if:
$$
\frac12\frac{1-2\la-j\delta}{1-\la-j_0 \delta} \geq \frac12\frac{1-2\la-2 j_0\delta}{1-\la-j_0 \delta}, \quad \mbox{and} \quad \frac12\frac{1-2\la}{1-\la} \geq \frac12\frac{1-2\la-j\delta}{1-\la},
$$
which is equivalent to $j\in[0,2 j_0]$. More precisely, if we choose as previously $j_0=2(1-\mu)$, $j=1-\mu$, the left bounds from these intervalls satisfy on the one hand:
$$
\frac12\frac{1-2\la-j\delta}{1-\la} \geq \frac12\frac{1-2\la-2 j_0\delta}{1-\la-j_0 \delta} \quad \Longleftrightarrow \quad 1+\la+2(1-\mu)\delta\geq 0 \quad \mbox{(always true)},
$$
and on the other hand,
$$
\frac12\frac{1-2\la-j\delta}{1-\la-j_0 \delta} \geq \frac12\frac{1-2\la}{1-\la} \quad \Longleftrightarrow \quad \la\leq \frac13.
$$
We are now able to specify $\la$: simply choosing $(\theta,\la)=(1,2\delta)$, the previous estimate is true when $\delta\leq \frac16$ and it also implies \eqref{cond:p:alpha}, which is equivalent to $(5-\mu)\delta\leq 1$. We emphasize that \eqref{Encadrementp1} requires $(6-2\mu)\delta<1$.

To conclude, we first assume $\delta\leq \frac16$, then with $(\theta,\la)=(1,2\delta)$, we choose
$$
p_1=\frac2{1-(5-\mu)\delta}, \quad \overline{p_1}=\frac2{1+(5-\mu)\delta},
$$
which also implies $\sigma_2=\frac12+\frac{1+\mu}2 \delta$ so that, similarly to \eqref{EstimStri:m0:1}, we obtain:
\begin{equation}
 \|\We\|_{L_t^{\frac4{1+(5-\mu)\delta}} L_{h,v}^{\frac1{1-2\delta},2}} \leq C_{\nu,\nu',\mu, \delta,\Co} (m_0 +\ee^{\frac{\delta}2}) \leq C_{\nu,\nu',\mu, \delta,\Co} \max(m_0,\ee^\frac{\delta}2).
 \label{EstimStri:m0:2}
\end{equation}
\begin{rem}
 \sl{In \cite{FCVSN2}, and in the proof of Theorem \ref{ThCVStrong} it was sufficient to consider $\|\n \We\|_{L^2 L_{h,v}^{\frac2{1-s},2}(\R^3)}$}
\end{rem}
It remains to choose some suitable $\lb$: for these choices of $j,j_0$ and $\la$, and as $\delta\leq \frac16$, we proved that \eqref{Cond:beta} is equivalent to
$$
\lb \in \left[\frac12 \frac{1-(5-\mu)\delta}{1-2\delta},\frac12 \frac{1-4\delta}{1-2\delta}\right],
$$
so that we can for instance choose the middle point:
\begin{equation}
 \lb =\lo=\frac{2-(9-\mu)\delta}{4(1-2\delta)},
\label{Lambdao}
 \end{equation}
which is an element of $]0,1[$ and we are sure (thanks to Proposition \ref{Propestimtutb}) that:
\begin{equation}
 \|\tu\|_{L^{4(1-\lo)} \dot{H}^{1-2\delta}} +\|\tu\|_{L^{\frac{2\lo}{1-(5-\mu)\delta}} \dot{H}^{1-2\delta}} \leq C(\nu, \nu', \Co, \mu, \delta).
 \label{estim:deuxLp}
\end{equation}
For the same choices, we bound the last term as follows:
\begin{equation}
 |(G_{13}|\de)_{\dot{H}^\frac12}|\\
 \leq \frac{\nu'}{100} \|\n\de\|_{\dot{H}^\frac12}^2 +\frac{C}{\nu'}\|\tb\|_{\dot{H}^{1-2\delta}(\R^2)}^{4(1-\lo)} \|\de\|_{\dot{H}^\frac12}^2  +C\|\tb\|_{\dot{H}^{1-2\delta}(\R^2)}^{2\lo} \|\n \We\|_{L_{h,v}^{\frac2{1-2\delta},2}}^2.
 \label{Estim:ext:G13m0}
\end{equation}

\subsection{End of the proof}

For $T_\ee$ defined as in \eqref{DefTe}, we have to close the bootstrap argument in order to prove Theorem \ref{ThCVStrong2}. Gathering the estimates from Section \ref{EstimHsTh1} and modifying those involving $F_8,G_{13}$ by \eqref{Estim:ext:F8m0} and \eqref{Estim:ext:G13m0}, we obtain that for any $t\leq T_\ee$ (we recall that $s=\frac12$, $\beta=\frac14$ in \eqref{estim:G14} and $\lo$ is defined in \eqref{Lambdao}):
\begin{multline}
 \|\dde(t)\|_{\dot{H}^\frac12}^2 +\|\de(t)\|_{\dot{H}^\frac12}^2 +\nu \int_0^t \|\n \dde(\tau)\|_{\dot{H}^\frac12}^2d\tau +\nu' \int_0^t\|\n \de(\tau)\|_{\dot{H}^\frac12}^2d\tau\\
 \leq C_{\nu,\nu',\delta} \Bigg[\|\n \We\|_{L_t^2 L^3}^2 +\Big(\|\tu\|_{L_t^{\frac2{1-(1-\mu)\delta}} \dot{H}^1} +\|\tb\|_{L_t^{\frac2{1-(1-\mu)\delta}} \dot{H}^1}\Big) \|\We\|_{L_t^{\frac4{1+(1-\mu)\delta}} L_{h,v}^{\infty,2}}^2\\
 + \|\n \ce\|_{L_t^2 \dot{H}^{\frac12}} \|\We\|_{L_t^4 L^6}^2
+\Big(\|\tu\|_{L^{\frac{4\lo}{1-(5-\mu)\delta}} \dot{H}^{1-2\delta}}^{2\lo} +\|\tb\|_{L^{\frac{4\lo}{1-(5-\mu)\delta}} \dot{H}^{1-2\delta}}^{2\lo}\Big) \|\We\|_{L_t^{\frac4{1+(5-\mu)\delta}} L_{h,v}^{\frac2{1-2\delta},2}}^2\\
+\|\ce\|_{L_t^4 \dot{H}^1}^2 \Big(\|\We\|_{L_t^4 L^6}^2 +\||D|^\frac12 \We\|_{L_t^4 L^3}^2 +\|\We\|_{L_t^4 \dot{B}_{4,2}^\frac14}^2\Big)\Bigg]\\
\times \exp C_{\nu,\nu',\delta} \Big\{\|\n \We\|_{L_t^2 L^3}^2 +\|\We\|_{L_t^4 L^6}^4 +(1+\|\tu\|_{L^\infty L^2}^2)\|\n_h \tu\|_{L^2 L^2}^2 +(1+\|\tb\|_{L^\infty L^2}^2)\|\n_h \tb\|_{L^2 L^2}^2\\
+\|\n\ce\|_{L_t^2 \dot{B}_{2,1}^{\frac12}}^2 +\|\tu\|_{L^{4(1-\lo)} \dot{H}^{1-2\delta}} +\|\tb\|_{L^{4(1-\lo)} \dot{H}^{1-2\delta}}\Big\}.
\end{multline}
Thanks to Theorems \ref{ThExistlim}, \ref{Thc}, Proposition \ref{Propestimtutb} and \eqref{Hypinit2}, there exists a constant $\Do=\Do(\nu, \nu', \Co, \delta,\mu)$ so that the estimates are simplified into:
\begin{multline}
 \|\dde(t)\|_{\dot{H}^\frac12}^2 +\|\de(t)\|_{\dot{H}^\frac12}^2 +\nu \int_0^t \|\n \dde(\tau)\|_{\dot{H}^\frac12}^2d\tau +\nu' \int_0^t\|\n \de(\tau)\|_{\dot{H}^\frac12}^2d\tau\\
 \leq \Do \Bigg[\|\n \We\|_{L^2 L^3}^2 +\|\We\|_{L^4 L^6}^2 +\|\We\|_{L^{\frac4{1+(1-\mu)\delta}} L_{h,v}^{\infty,2}}^2
+\|\We\|_{L^{\frac4{1+(5-\mu)\delta}} L_{h,v}^{\frac2{1-2\delta},2}}^2\\
+\||D|^\frac12 \We\|_{L^4 L^3}^2 +\|\We\|_{L^4 \dot{B}_{4,2}^\frac14}^2\Bigg] \exp \Do \Big\{1+\|\n \We\|_{L^2 L^3}^2 +\|\We\|_{L^4 L^6}^4\Big\}.
\label{Estimapriori2}
 \end{multline}
The norms of the oscillations $\We$ are estimated according to the following adaptation of Proposition \ref{PropestimStriStrong} (we recall the two new anisotropic estimates were proved in the previous sections and led us to determine the values of $\la,\lb,p_1$):
\begin{prop}
 \sl{Assuming \eqref{Hypinit2}, there exists a positive constant $\Eo$ (depending on $\nu, \nu', \delta, \Co,\mu$) such that if $\delta\leq \frac16$, for any $\ee\in]0,1]$, we have:
 \begin{multline}
   \|\n \We\|_{L^2 L^3} +\|\We\|_{L^4 L^6} +\||D|^\frac12 \We\|_{L^4 L^3} +\|\We\|_{L^4 \dot{B}_{4,2}^\frac14}\\
   +\|\We\|_{L^{\frac4{1+(1-\mu)\delta}} L_{h,v}^{\infty,2}}^2
+\|\We\|_{L^{\frac4{1+(5-\mu)\delta}} L_{h,v}^{\frac1{1-2\delta},2}} \leq \frac{\Eo}2 (m_0+\ee^\frac{\delta}2) \leq \Eo \max(m_0,\ee^\frac{\delta}2).
 \end{multline}
Moreover, for any $p\in]2,\frac2{\delta}[$, there exists $\Eo'>0$ (depending on $\nu, \nu', \delta, \Co,\mu,p$) such that:
 \begin{equation}
 \|\We\|_{L^p L^\frac3{1-\frac2{p}}} \leq \Eo' \max(m_0,\ee^\frac{\delta}2).
 \end{equation}
 }
 \label{PropestimStriStrong2}
\end{prop}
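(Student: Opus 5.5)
The plan is to reuse the general Strichartz estimate \eqref{EstimStriIsoW} for the isotropic norms and \eqref{EstimStriAnisoW} for the anisotropic ones. In each case the three source contributions will be bounded exactly as before, with only the role of $\voe$ changed. The right-hand side in every case has the form
$$
C\,\ee^{\frac{\theta}{2}(1-\frac2r)}\Big(\|\voe\|_{X}+\|\coe\|_{H^{\frac12+\delta}}^2+\|\coe\|_{H^{\frac12+\delta}}\Big)
$$
(with exponent $\frac{\theta}4(1-\frac2m)$ in the anisotropic case). By \eqref{Hypinit2}, the $\coe$ terms are bounded by a constant depending on $\Co$, via Theorem \ref{Thc}, Proposition \ref{PropFext} and Lemma \ref{estimBsHstilde}. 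Hence every norm is bounded by $C(\ee^{a}\|\voe\|_X+\ee^{a})$, where $a$ is the gained exponent. So the whole statement reduces to two checks for each norm: the exponent must satisfy $a\ge\frac{\delta}2$, and $\|\voe\|_X$ must be controlled by $C m_0\ee^{-\frac{\delta}2}$. Together these give $C(m_0+\ee^{\frac{\delta}2})$, since $\ee^{a}\le\ee^{\frac\delta2}$ for $\ee\le1$.

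For the four isotropic norms $\|\n\We\|_{L^2L^3}$, $\|\We\|_{L^4L^6}$, $\||D|^{\frac12}\We\|_{L^4L^3}$ and $\|\We\|_{L^4\dot B^{\frac14}_{4,2}}$, I take the same parameters as in the proof of Proposition \ref{PropestimStriStrong} with $s=\frac12$. These are $(d,p,r,\theta)=(1,2,3,3\delta)$, $(0,4,6,\frac32\delta)$, and $\theta=\frac{3\delta}{2(1-s)}=3\delta$ for the last two. Each of these choices makes the regularity index equal to $\frac12+\delta$ and the $\ee$-exponent exactly $\frac{\delta}2$. Since $X=\dot H^{\frac12+\delta}$ here, and since the third Besov index is $2$ (see Proposition \ref{Propermut}), I get directly $\|\voe\|_X\le m_0\ee^{-\frac\delta2}$. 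The admissibility conditions ($\theta\le1$, $p<\frac2{\theta(1-2/r)}$) hold because $\delta\le\frac16$.

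The two anisotropic norms were already set up in the previous subsections, with $\theta=2\delta$ and with $(\theta,\la)=(1,2\delta)$ respectively. In both cases $\sigma_2=\frac12+\frac{1+\mu}2\delta$, which lies strictly inside $]\frac12+\mu\delta,\frac12+\delta[$. Lemma \ref{estimBsHs} with $\alpha=\beta=\frac{1-\mu}2\delta$ then gives $\|\voe\|_{\dot B^{\sigma_2}_{2,1}}\le C\|\voe\|_{\dot H^{\frac12+\mu\delta}}^{1/2}\|\voe\|_{\dot H^{\frac12+\delta}}^{1/2}\le Cm_0\ee^{-\frac\delta2}$. The force terms are handled in the Besov space $\tilde L\dot B_{2,1}$ by Lemma \ref{estimBsHstilde}, interpolating between $\dot H^{\mu\delta}$ and $\dot H^{\delta}$. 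This yields \eqref{EstimStri:m0:1} and \eqref{EstimStri:m0:2}, and the admissibility constraints were checked there under $\delta\le\frac16$. The only genuine obstacle is precisely these $\dot B^{\cdot}_{2,1}$ terms: no $\ee$-exponent may be wasted, which is why $\theta$ is tied to $\delta$ so that $a=\frac\delta2$ exactly. That difficulty has already been resolved above, so what remains is bookkeeping. Summing the six bounds and calling the resulting constant $\frac{\Eo}{2}$ gives the first inequality.

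For the last estimate I take $(d,p,r)=(0,p,\frac3{1-\frac2p})$ and $\theta=\frac{3\delta}{1+\frac4p}$, exactly as before. The condition $p<\frac2\delta$ ensures admissibility, and the exponent is $\frac\delta2$ with the $\dot H^{\frac12+\delta}$ norm of $\voe$. Hence $\|\We\|_{L^pL^{\frac3{1-2/p}}}\le C_p(m_0+\ee^{\frac\delta2})\le\Eo'\max(m_0,\ee^{\frac\delta2})$.
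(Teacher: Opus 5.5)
Your argument follows the paper's route: the isotropic norms come from \eqref{EstimStriIsoW} with $\ee$-exponent exactly $\frac\delta2$, paired with $\|\voe\|_{\dot H^{\frac12+\delta}}\le m_0\ee^{-\frac\delta2}$. The two anisotropic norms are \eqref{EstimStri:m0:1}--\eqref{EstimStri:m0:2}, where $\sigma_2=\frac12+\frac{1+\mu}2\delta$ and Lemmas \ref{estimBsHs} and \ref{estimBsHstilde} handle the $\dot B^{\cdot}_{2,1}$ norms; this is the paper's proof.

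There is one parameter slip. The norm $\|\We\|_{L^4\dot B^{1/4}_{4,2}}$ corresponds to $(d,p,r)=(\frac14,4,4)$, which is the $s=\frac14$ member of the family $(s,4,\frac3{s+1/2})$, not the $s=\frac12$ one. For it, $\sigma_1=\frac12+\frac\theta2$, so you need $\theta=2\delta$ rather than $3\delta$. With $3\delta$ the index becomes $\frac12+\frac32\delta$, which \eqref{Hypinit2} does not control. With $\theta=2\delta$ the exponent is again $\frac\delta2$, and admissibility ($4<\frac2\delta$) holds.
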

Injecting in \eqref{Estimapriori2} the Strichartz estimates from Proposition \ref{PropestimStriStrong2}, if $\ee\leq \ee_0$ (see below), we obtain that for any $t\leq T_\ee$,
\begin{multline}
 \|\dde(t)\|_{\dot{H}^\frac12}^2 +\|\de(t)\|_{\dot{H}^\frac12}^2 +\nu \int_0^t \|\n \dde(\tau)\|_{\dot{H}^\frac12}^2d\tau +\nu' \int_0^t\|\n \de(\tau)\|_{\dot{H}^\frac12}^2d\tau\\
 \leq 6 \Do \Eo^2 e^{\Do (1+ (\Eo \max(m_0,\ee^\frac{\delta}2))^2 +(\Eo \max(m_0,\ee^\frac{\delta}2))^4)} \max(m_0,\ee^\frac{\delta}2)^2\\
 \leq 6 \Do \Eo^2 e^{\Do (1+ \Eo^2 +\Eo^4)} \max(m_0,\ee^\frac{\delta}2)^2 \leq \left(\frac1{8C} \min(\nu, \nu')\right)^2,
 \label{estim-Boot}
\end{multline}
if $m_0,\ee_0\in]0,1]$ are chosen so small that:
\begin{equation}
 m_0\leq \min\left(1, \frac1{8C} \min(\nu, \nu') \frac1{\sqrt{6\Do} \Eo e^{\frac{\Do}2}(1+\Eo^2+\Eo^4)}\right),\quad \mbox{and} \quad 0< \ee \leq \ee_0\overset{def}{=}m_0^{\frac2{\delta}}.
  \label{condeps:b1}
\end{equation}
This allows, as previously, to obtain that $T_\ee=\Te=\infty$ and \eqref{EstimTHStrongC}.

To finish, when $p\in]2,\frac2{\delta}[$, \eqref{EstimTHStrongC2} is obtained similarly as for Theorem \ref{ThCVStrong} thanks to the last estimate from Proposition \ref{PropestimStriStrong2}. $\blacksquare$
\begin{rem}
 \sl{The only need for $\ee\leq \ee_0$ comes from \eqref{condeps:b1}, which is due to the presence of external force terms in System \eqref{SystWe}. This does not happen in the Rotating fluids case.}
 \label{Remeps}
\end{rem}

\section{Rotating fluids system}
\label{SectRF}
The following theorems are adaptations of the previous ones and improve our work from \cite{FCRF}. Let us first introduce the rotating fluids system:
\begin{equation}
 \begin{cases}
  \d_t \ve +\ve\cdot \n \ve-\nu \D \ve +\frac{e_3\wedge \ve}{\ee} =-\n p_\ee,\\
  \div \ve=0,\\
  {\ve}_{|t=0}= v_0(x_1,x_2,x_3) +\ub(x_1,x_2),
 \end{cases}
\label{RF} \tag{$RF_\ee$}
\end{equation}
the limit system (2D-Navier-Stokes with three components):
\begin{equation}
 \begin{cases}
  \d_t \ub +\ub\cdot \n \ub -\nu \D \ub=-\n\bar{p},\\
  \div \ub=0,\\
  \ub_{|t=0}= \ub_0,
 \end{cases}
\label{NS2D} \tag{$2D-NS$}
\end{equation}
and finally the corresponding oscillations $\Wer$, solving:
\begin{equation}
 \begin{cases}
   \d_t \Wer -\nu \D \Wer +\frac1{\ee} \bP (\Wer \wedge e_3)=0,\\
 W_{\ee|t=0}^{RF}=\voe.
 \end{cases}
\label{SystWe2}
 \end{equation}

 \begin{thm}
 \sl{For any $\Co\geq 1$ (size), any $\delta\in]0,\frac14[$ (extra regularity), any $\mu, k\in[0,1[$ (as close to 1 as desired), any $\gamma \in [0, \frac{\delta}2[$, there exist $l_0\in]0,1]$ (depending on $\delta,\gamma,\mu,k$), $\ee_0,\Ko,\Fo>0$ (depending on $\nu, \delta, \gamma, \Co, \mu,k$) such that for any $\ee\in]0,\ee_0]$ and any initial data as in \eqref{RF} with $\ub_0\in L^2(\R^2)$, $\voe \in (\dot{H}^\frac12 (\R^3) \cap \dot{H}^{\frac12+\delta}(\R^3))$ satisfying:
 \begin{equation}
  \|\ub_0\|_{L^2}\leq \Co, \quad \|\voe\|_{\dot{H}^{\frac12+\mu\delta} \cap \dot{H}^{\frac12+\delta}}\leq \Co \ee^{-\gamma},
 \end{equation}
then $T_\ee^*=+\infty$ and for any $s\in[\frac12,\frac12 + l_0(\frac{\delta}2-\gamma)]$ we have:
\begin{equation}
 \|\ve-\ub-\Wer\|_{\dot{E}^s} \leq \Fo \ee^{k (\frac{\delta}2-\gamma)}.
 \label{EstimTHStrongD}
\end{equation}
Moreover, we can get rid of the oscillations $\Wer$: for any $p\in]2,\frac2{\delta}[$, there exists a constant $\Fo'>0$ (depending on $p,\nu, \delta, \gamma, \Co, \mu,k$) such that:
\begin{equation}
 \|\ve-\ub\|_{L^p L^\frac3{1-\frac2p}} \leq \Fo' \ee^{k (\frac{\delta}2-\gamma)}.
\label{EstimTHStrongD2}
\end{equation}
Finally, if $\voe\in\dot{H}^{\frac12-\delta}\cap \dot{H}^{\frac12+\delta}$, \eqref{EstimTHStrongD} remains true for any $s\in[\frac12 - l_0(\frac{\delta}2-\gamma),\frac12 + l_0(\frac{\delta}2-\gamma)]$, and we can recover the case $p=2$ in \eqref{EstimTHStrongD2}:
\begin{equation}
 \|\ve-\ub\|_{L^2 L^\infty} \leq \Fo \ee^{k (\frac{\delta}2-\gamma)}.
\end{equation}
 }
\label{ThCVStrongRF}
\end{thm}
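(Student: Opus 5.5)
The plan is to specialize the proof of Theorem \ref{ThCVStrong} to the rotating fluids setting. Formally, this means setting $\tb=0$, $\ce=0$ (so $\coe=0$ and the magnetic unknown $\de$ vanishes identically), with $\tu=\ub$. First, Theorem \ref{ThExistlim} reduces to the classical global theory for \eqref{NS2D}: the $L^2$ energy estimate gives $\|\ub\|_{\dot{E}^0}\le \|\ub_0\|_{L^2}\le \Co$. Since only $\ub_0\in L^2$ is assumed, I will not use Proposition \ref{Propestimtutb} with $\sigma_+>0$. This is harmless here, because only the $F_6,F_8$ terms with $p_1=2$ (the form used in Section 2) are needed, and these require only $\|\n_h\ub\|_{L^2L^2}$ and $\|\ub\|_{L^\infty L^2}$. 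I then set $\dde=\ve-\ub-\Wer$. It satisfies System \eqref{Systdde} with only $F_1,\dots,F_8$ on the right-hand side and zero initial data, and $\Wer$ solves \eqref{SystWe2}, which has no external force.

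The key steps, in order, are as follows. (i) Local Fujita--Kato existence for $\ve-\ub$, with the continuation criterion and propagation of regularity; this is the case $\de\equiv0$ of Theorem \ref{Th0FK}. (ii) Define $T_\ee$ as in \eqref{DefTe} and perform the $\dot H^s$ energy estimate for $s\in[\frac12-\eta_0l\delta,\frac12+\eta_0l\delta]$, bounding $F_1,\dots,F_8$ exactly as in \cite{FCRF, FCVSN2}. This yields \eqref{Estimapriori}, with all $\ce$ terms removed, so in particular the factors $|\ln\ee|$ and $e^{\Do\Ko^2|\ln\ee|}$ disappear. (iii) Prove the analogue of Proposition \ref{PropestimStriStrong} for $\Wer$. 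The Strichartz estimates (Propositions \ref{EstimStri} and \ref{EstimStrianiso}) are applied with zero forcing, so each bound reads $C\ee^{\frac\theta2(1-\frac2r)}\|\voe\|$, which after the same choices of $\theta$ gives $C\ee^{\frac\delta2}\Co\ee^{-\gamma}$. The anisotropic $L^2L^{\frac2{1-s},2}_{h,v}$ and isotropic $L^2L^\infty$ cases require the Besov index $1$ and are handled by the same interpolation trick (Lemma \ref{estimBsHs} with parameter $b$), which is why $\mu$ enters. The same constraints on $l$ appear: \eqref{Cond:lbis1}, \eqref{Cond:l3}, \eqref{Cond:lbis2} and \eqref{Cond:l1}. (iv) Plug into the a priori bound, choose $l\le\frac25(1-k)$ to get $\Fo^2\ee^{2k(\frac\delta2-\gamma)}$, and close the bootstrap using \eqref{condeps:2}. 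Then remove $\Wer$ using the $L^pL^{\frac3{1-2/p}}$ Strichartz bound, and for $p=2$ use the $\dot H^{\frac12\pm}$ interpolation in $\dot B^{3/2}_{2,1}$ exactly as at the end of the proof of Theorem \ref{ThCVStrong}.

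The main point needing care is step (iii)--(iv): the losses $\ee^{-\eta_0\delta l}$ in the MHD proof came from $|\ln\ee|$ and the large $\coe$. These are now absent, but the losses from the Besov-index-$1$ interpolation ($b>0$) remain and still force the exponent down to $\eta_0\delta(1-2l)$, or $(1-\frac52l)$ after collecting terms. I will check that the worst exponent still allows $k$ arbitrarily close to $1$ by taking $l$ small. The smallness of $\ee$ is then needed only for \eqref{condeps:2}, which fixes $\ee_0$. The constant $\Ko$ becomes irrelevant and can be taken as any value, for instance $1$.
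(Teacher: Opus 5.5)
Your proposal is correct and matches the paper's argument. The paper proves this theorem by specializing the proof of Theorem \ref{ThCVStrong}: there is no forcing in \eqref{SystWe2} and no magnetic terms, only $\ub_0\in L^2$ is needed, and $\ee_0$ is fixed solely by the bootstrap smallness condition \eqref{condeps:2}, exactly as you outline. Your carried-over exponents $\eta_0\delta(1-2l)$ and $1-\frac52 l$ are in fact pessimistic, since in the MHD proof part of those losses came from $|\ln\ee|$ and from $\Ko$, which are now absent; this only means that $l\le\frac25(1-k)$ is more than enough.
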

In our last result we consider the maximal possible size for the initial $3D-$part of the velocity:
\begin{thm}
 \sl{For any $\Co\geq 1$ (size), any $\delta\in]0,\frac16]$ (extra regularity), any $\mu\in[0,1[$ (as close to 1 as desired), there exist $m_0,\Fo>0$ (depending on $\nu, \delta, \Co, \mu$) such that for any $\ee\in]0,1]$ and any initial data as previously with $\ub_0 \in H^{2(1-\mu)\delta}(\R^2)$, $\voe \in \dot{H}^\frac12 (\R^3) \cap \dot{H}^{\frac12+\delta}(\R^3)$ satisfying:
 \begin{equation}
  \|\ub_0\|_{H^{2(1-\mu)\delta}}\leq \Co, \quad \|\voe\|_{\dot{H}^{\frac12+\mu\delta} \cap \dot{H}^{\frac12+\delta}}\leq m_0 \ee^{-\frac{\delta}2},
 \end{equation}
 then $T_\ee^*=+\infty$ and we have:
\begin{equation}
 \|\ve-\ub-\Wer\|_{\dot{E}^\frac12} \leq \Fo \max(m_0,\ee^\frac{\delta}2).
\end{equation}
Moreover, we can get rid of the oscillations: for any $p\in]2,\frac2{\delta}[$,
\begin{equation}
 \|\ve-\ub\|_{L^p L^\frac3{1-\frac2p}} \leq \Fo \max(m_0,\ee^\frac{\delta}2),
\end{equation}
which provides convergence if the small constant $m_0$ is replaced by some $m(\ee)$ that converges to zero.
}
\label{ThCVStrongRF2}
\end{thm}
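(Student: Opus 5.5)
We transpose the proof of Theorem \ref{ThCVStrong2} to the rotating fluids setting, which amounts to deleting every magnetic contribution. We set $\dde\overset{def}{=}\ve-\ub-\Wer$. Since $\Wer$ solves the homogeneous system \eqref{SystWe2} with datum $\voe$, $\dde$ has zero initial datum and satisfies
$$
\d_t \dde -\nu\D\dde+\frac1{\ee}\bP(\dde\wedge e_3)=F_1+\dots+F_8,
$$
where $F_1,\dots,F_8$ are the terms of \eqref{systdde} with $(\tu,\We)$ replaced by $(\ub,\Wer)$. All $G_i$ and the terms $F_9,\dots,F_{13}$ (involving $\de,\tb,\ce$) disappear. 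We run the same bootstrap: $T_\ee$ is defined as in \eqref{DefTe} using only $\|\dde\|_{\dot{H}^\frac12}$, and the continuation criterion is the Fujita--Kato one for \eqref{RF} (the local theory of \cite{FCRF}). We then take the $\dot{H}^\frac12$ inner product with $\dde$.

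The terms $F_1,\dots,F_5,F_7$ are estimated exactly as in \cite{FCRF, FCVSN2}. They contribute either Gronwall factors, which are bounded via Theorem \ref{ThExistlim} applied to \eqref{NS2D} (set $\tb=0$), or source terms $\|\n\Wer\|_{L^2L^3}^2+\|\Wer\|_{L^4L^6}^2$. For $F_6$ we use the new argument of the previous section: Hölder in time with $p_1=\frac2{1-(1-\mu)\delta}$ gives
$$
\|\ub\|_{L^{p_1}\dot{H}^1}\,\|\Wer\|^2_{L^{2\overline{p_1}}L^{\infty,2}_{h,v}}.
$$
Here Proposition \ref{Propestimtutb}, with $\sigma_+=2(1-\mu)\delta$ and the hypothesis $\ub_0\in H^{2(1-\mu)\delta}$, controls the first factor. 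This is exactly why that hypothesis appears; in the MHD case it was hidden in $H^\delta$ when $\mu\ge\frac12$. For $F_8$ we reproduce \eqref{Estim:ext:F8m0}--\eqref{estim:deuxLp} with $\la=2\delta$, $\lb=\lo$ from \eqref{Lambdao} and $p_1=\frac2{1-(5-\mu)\delta}$. This needs $\|\ub\|_{\dot{H}^{1-2\delta}}\in L^{4(1-\lo)}\cap L^{2p_1\lo}$, which again follows from Proposition \ref{Propestimtutb} with $j_0=2(1-\mu)$, and it uses $\delta\le\frac16$.

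The Strichartz part is the analogue of Proposition \ref{PropestimStriStrong2}, and it is simpler because there is no force term. Propositions \ref{EstimStri} and \ref{EstimStrianiso} applied to \eqref{SystWe2} with $\theta=2\delta$ (isotropic) or $\theta\la=2\delta$ (anisotropic) give $\sigma_2=\frac12+\frac{1+\mu}2\delta$. Lemma \ref{estimBsHs} with $\aa=\bb=\frac{1-\mu}2$ then gives
$$
\|\voe\|_{\dot{B}^{\sigma_2}_{2,1}}\le C\|\voe\|_{\dot{H}^{\frac12+\mu\delta}}^{\frac12}\|\voe\|_{\dot{H}^{\frac12+\delta}}^{\frac12}\le Cm_0\ee^{-\frac\delta2}.
$$
So every relevant norm of $\Wer$ is bounded by $\Eo\,\ee^{\frac\delta2}\,m_0\ee^{-\frac\delta2}=\Eo m_0$. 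The same holds for the isotropic norms $\|\n\Wer\|_{L^2L^3}$, $\|\Wer\|_{L^4L^6}$ and $\|\Wer\|_{L^pL^{\frac3{1-2/p}}}$ for $p\in]2,\frac2\delta[$, using the parameters of the proof of Proposition \ref{PropestimStriStrong} with $\gamma=\frac\delta2$. The only care is to keep the exponent of $\ee$ exactly $\frac\delta2$; the same interpolation between $\dot{H}^{\frac12+\mu\delta}$ and $\dot{H}^{\frac12+\delta}$ does this. Because there is no $\ee$-independent force term, the bound is $\Eo m_0$ rather than $\Eo\max(m_0,\ee^{\frac\delta2})$, and no $\ee_0$ is needed (Remark \ref{Remeps}).

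Plugging these bounds into the analogue of \eqref{Estimapriori2} gives, for $t\le T_\ee$,
$$
\|\dde\|_{\dot{E}^\frac12_t}^2\le 6\Do\Eo^2e^{\Do(1+\Eo^2+\Eo^4)}m_0^2.
$$
Choosing $m_0$ as in \eqref{condeps:b1} closes the bootstrap, so $T_\ee=T_\ee^*=\infty$, and the claimed estimate follows since $m_0\le\max(m_0,\ee^\frac\delta2)$. The $L^pL^{\frac3{1-2/p}}$ bound then follows from the Sobolev embedding $\dot{H}^{\frac12+\frac2p}\hookrightarrow L^{\frac3{1-2/p}}$ and interpolation of $\|\dde\|_{\dot{E}^{\frac12}}$, together with the triangle inequality and the Strichartz bound on $\|\Wer\|_{L^pL^{\frac3{1-2/p}}}$.

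The main obstacle is checking that the parameter choices for $F_6$ and $F_8$ remain compatible with what Proposition \ref{Propestimtutb} gives for $\ub$ under only $H^{2(1-\mu)\delta}$ regularity. This comes down to the interval condition \eqref{Cond:beta} with $j_0=2(1-\mu)$, $j=1-\mu$, and $\la\le\frac13$ for the choice $\la=2\delta$; since the computation is identical to the MHD one, it carries over directly.
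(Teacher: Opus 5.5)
Your proposal is correct and is essentially the paper's proof. You transpose the proof of Theorem \ref{ThCVStrong2}, observe that without forcing in \eqref{SystWe2} every Strichartz bound becomes $\Eo m_0$ (so smallness of $m_0$ alone closes the bootstrap for all $\ee\in]0,1]$), and use $\ub_0\in H^{2(1-\mu)\delta}$ only in the new $F_6,F_8$ estimates via Proposition \ref{Propestimtutb}.

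One small slip: $\theta=2\delta$ is the anisotropic choice (the $L^{\infty,2}_{h,v}$ estimate for $F_6$), not an isotropic one. Moreover, the isotropic norms need no interpolation at all, since there $\sigma_1=\frac12+\delta$ with third index $2$, so $\|\voe\|_{\dot{H}^{\frac12+\delta}}$ suffices.
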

\textbf{Proof:} The proof is similar to that of Theorem \ref{ThCVStrong2}, but easier as there are no external force terms in the analoguous of System \eqref{SystWe}, which (thanks to Remark \ref{Remeps}) implies we only ask that $\ee\leq \ee_0$ only to be sure that the energy is bounded by $\frac{\nu}{4C}$ for an initial date of size $\Co \ee^{-\gamma}$. In the second case this condition is transferred to the smallness of $m_0$ which allows to consider any $\ee\in]0,1]$. Indeed, the right-hand side of the analoguous of \eqref{estim-Boot} is simplified into $\Do e^{\Do (1+\Eo^2+\Eo^4)} m_0^2$.

Another important point is that, contrary to Theorem \ref{ThCVStrong2} (where we already needed that $\tu,\tb\in H^\delta$), here we only need $\ub_0\in L^2$ and the extra regularity $\ub_0\in H^{2(1-\mu)\delta}$ is only required so that the size of the initial 3D-part can reach $m_0 \ee^{-\frac{\delta}2}$. $\blacksquare$

\begin{rem}
 \sl{Of course, as usual there is a hidden condition on $\ee$: if the initial data $\voe$ is independant of $\ee$ then the conditions of the previous theorem require that
 $$
 \ee \leq \left(\frac{m_0}{\|v_0\|_{\dot{H}^{\frac12+\mu\delta} \cap \dot{H}^{\frac12+\delta}}} \right)^\frac2{\delta}.
 $$}
\end{rem}

\section{Appendix}

\subsection{Functional spaces and main injection estimates}

If, for $j\in\Z$, $\ddj$ and $\dot{S}_j$ are the usual dyadic operators, we define the homogeneous Besov norms and spaces as follows:
\begin{defi}
\sl{For $s\in\R$ and $1\leq p,r\leq\infty,$ we set
$$
\dot{B}_{p,r}^s(\R^d)=\left\{u \in \cS'(\R^d),\mbox{ } \underset{j\rightarrow -\infty}{\mbox{lim}} \|S_j u\|_{L^\infty}=0 \mbox{ and } \|u\|_{\dot{B}_{p,r}^s} \overset{\mbox{def}}{=} \|\big(2^{qs} \|\ddq u\|_{L^p} \big)_{q \in \Z}\|_{\ell^r} < \infty\right\}.
$$
}
\end{defi}
\begin{rem}
 \sl{We refer to \cite{Dbook} for a complete presentation of Sobolev and Besov spaces through the Littlewood-Paley decompositions and to \cite{FCVSN2} for a presentation of the tools needed to bound the various parts of the external force term.}
\end{rem}
As in \cite{FCPAA, FCcompl, FCRF, FCStratif1, FCStratif2, FCVSN2}, we sometimes work in slightly modified spaces $\tilde{L}_t^a \dot{B}_{b,c}^s$ (also known as the Chemin-Lerner time-space Besov spaces): compared to the space $L_t^p \dot{B}_{q,r}^s$ the integration in time is performed \emph{before} the summation with respect to the frequency decomposition index (as usual, $t$ is removed when the integration is performed on $\R_+$).
\begin{defi} (\cite{Dbook} section 2.6.3)
 \sl{For $s,t\in \R$ and $a,b,c\in[1,\infty]$, we first define the following norm
 $$
 \|u\|_{\tilde{L}_t^a \dot{B}_{b,c}^s}= \Big\| \left(2^{js}\|\ddj u\|_{L_t^a L^b}\right)_{j\in \Z}\Big\|_{l^c(\Z)},
 $$
 and the space $\tilde{L}_t^a \dot{B}_{b,c}^s$ is defined as the set of tempered distributions $u$ such that $\lim_{j \rightarrow -\infty} S_j u=0$ in $L^a([0,t],L^\infty(\R^d))$ and $\|u\|_{\tilde{L}_t^a \dot{B}_{b,c}^s} <\infty$.
 }
 \label{deftilde}
\end{defi}
Having to frequently switch between $\tilde{L}_t^a \dot{B}_{b,c}^s$ and $L_t^a \dot{B}_{b,c}^s$, the following proposition explains the injections we can consider:
\begin{prop}
\sl{
For all $a,b,c\in [1,\infty]$ and $s\in \R$:
     $$
     \begin{cases}
    \mbox{if } a\leq c,& \forall u\in L_t^a \dot{B}_{b,c}^s, \quad \|u\|_{\tilde{L}_t^a \dot{B}_{b,c}^s} \leq \|u\|_{L_t^a \dot{B}_{b,c}^s}\\
    \mbox{if } a\geq c,& \forall u\in\tilde{L}_t^a \dot{B}_{b,c}^s, \quad \|u\|_{\tilde{L}_t^a \dot{B}_{b,c}^s} \geq \|u\|_{L_t^a \dot{B}_{b,c}^s}.
     \end{cases}
     $$
     \label{Propermut}
     }
\end{prop}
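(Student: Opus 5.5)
The statement is a direct consequence of Minkowski's integral inequality for mixed Lebesgue norms, applied to the sequence–valued function
$$
f_j(\tau)\overset{def}{=}2^{js}\|\ddj u(\tau)\|_{L^b}, \qquad j\in\Z,\ \tau\in[0,t].
$$
With this notation the two norms to be compared are
$$
\|u\|_{L_t^a \dot{B}_{b,c}^s}=\Big\|\,\|(f_j(\tau))_j\|_{\ell^c}\Big\|_{L_t^a}, \qquad \|u\|_{\tilde{L}_t^a \dot{B}_{b,c}^s}=\Big\|\big(\|f_j\|_{L_t^a}\big)_j\Big\|_{\ell^c},
$$
so the only question is the order in which the $L^a$ norm in time and the $\ell^c$ norm in $j$ are taken.

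I use the general principle that, for nonnegative $f_j(\tau)$ and exponents $p\le q$, $\big\|\,\|f\|_{L^p_{(1)}}\big\|_{L^q_{(2)}}\le \big\|\,\|f\|_{L^q_{(2)}}\big\|_{L^p_{(1)}}$: the larger exponent taken last gives the smaller quantity. To see it in the finite case, set $r=q/p\ge 1$ and write $\big\|\,\|f\|_{L^p_{(1)}}\big\|_{L^q_{(2)}}^p=\big\|\int |f|^p\,d\mu_1\big\|_{L^r_{(2)}}$. The usual Minkowski integral inequality (the norm of an integral is at most the integral of the norm, valid since $r\ge1$) bounds this by $\int \| |f|^p\|_{L^r_{(2)}}\,d\mu_1=\int \|f\|_{L^q_{(2)}}^p\,d\mu_1$, which is the claim after taking $p$-th roots.

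\emph{Case $a\le c$.} Take the first measure to be Lebesgue measure on $[0,t]$ and the second to be counting measure on $\Z$, with $(p,q)=(a,c)$. This gives $\|(\|f_j\|_{L^a_t})_j\|_{\ell^c}\le \|\,\|(f_j)_j\|_{\ell^c}\|_{L^a_t}$, i.e. $\|u\|_{\tilde L^a_t\dot B^s_{b,c}}\le\|u\|_{L^a_t\dot B^s_{b,c}}$.

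\emph{Case $a\ge c$.} Exchange the roles: the first measure is counting measure on $\Z$, the second is Lebesgue measure on $[0,t]$, and $(p,q)=(c,a)$. This yields the reverse inequality $\|u\|_{L_t^a \dot{B}_{b,c}^s}\le \|u\|_{\tilde{L}_t^a \dot{B}_{b,c}^s}$.

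The cases where $a$ or $c$ equals $\infty$ are checked by hand; the only nontrivial one is when $\sup$ is the outer norm. For instance, if $c=\infty\ge a$, then for each $j$ one has $\|f_j\|_{L^a_t}\le \|\sup_k f_k\|_{L^a_t}$, and taking the supremum in $j$ gives the first inequality; the case $a=\infty\ge c$ is symmetric. Measurability of $\tau\mapsto f_j(\tau)$ and of $\tau\mapsto\|(f_j(\tau))_j\|_{\ell^c}$ is immediate for $u\in L^a_t\dot B^s_{b,c}$, the latter being a countable supremum or sum of measurable functions.

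It remains to check membership in the target space, namely the low-frequency condition $\lim_{j\to-\infty}\dot S_j u=0$ in $L^a([0,t],L^\infty)$ required in Definition \ref{deftilde} (respectively the pointwise-in-time condition of the Besov definition). Once the norm inequality holds, this follows from Bernstein's inequality, which gives $\|\ddj u\|_{L^\infty}\le C2^{jd/b}\|\ddj u\|_{L^b}$, together with the corresponding hypothesis on $u$. I expect this bookkeeping of the realization conditions, rather than the inequality itself, to be the only point requiring care; in the paper's usage it can essentially be taken for granted, as in \cite{Dbook}.
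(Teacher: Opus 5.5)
Your proof is correct and is the standard one: the paper states this proposition without proof, as a classical fact from the Chemin--Lerner framework of \cite{Dbook}, where these inequalities are obtained exactly as you do, by Minkowski's inequality exchanging the $L^a_t$ and $\ell^c$ norms. Your final paragraph overreaches, although it is not needed since only the norm inequalities are claimed: Bernstein's inequality transfers the low-frequency condition of Definition \ref{deftilde} only when $s<d/b$ (or $s=d/b$ and $c=1$), which covers every use in the paper, and the condition can fail otherwise (e.g.\ $u(\tau,x)=\tau^{-1}\phi(\tau^4x)$ on $]0,1]\times\R$ with $\hat\phi$ compactly supported lies in $L^1_t\dot B^1_{2,2}$, while $\dot S_ju\notin L^1_tL^\infty$ for every $j$).
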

Let us continue with other classical injections that we frequently use:
\begin{prop}
 \sl{(\cite{Dbook} Chapter 2) We have:
$$
 \begin{cases}
\mbox{For any } p\geq 1, & \dot{B}_{p,1}^0 \hookrightarrow L^p,\\
\mbox{For any } p\in[2,\infty[, & \dot{B}_{p,2}^0 \hookrightarrow L^p,\\
\mbox{For any } p\in[1,2], & \dot{B}_{p,p}^0 \hookrightarrow L^p.
\end{cases}
$$
}
 \label{injectionLr}
\end{prop}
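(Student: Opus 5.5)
The three embeddings rest on three different tools. We use the triangle inequality for $\dot{B}_{p,1}^0$, the Littlewood--Paley square function characterization of $L^p$ for $\dot{B}_{p,2}^0$ with $p\ge 2$, and interpolation (or the square function again) for $\dot{B}_{p,p}^0$ with $p\le 2$. Throughout, the condition $\lim_{j\rightarrow -\infty}\|\dot{S}_j u\|_{L^\infty}=0$ in the definition of $\dot{B}_{p,r}^s$ is what guarantees that $u=\sum_{q\in\Z}\ddq u$ in $\cS'$. We therefore first prove the estimates for the partial sums $\sum_{|q|\le N}\ddq u$ and pass to the limit $N\rightarrow\infty$ by Fatou's lemma (or weak-$*$ lower semicontinuity when $p=\infty$).

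\textbf{Case $\dot{B}_{p,1}^0$, $p\ge 1$.} Minkowski's inequality on the partial sums gives
$$
\Big\|\sum_{|q|\le N}\ddq u\Big\|_{L^p}\le \sum_{q}\|\ddq u\|_{L^p}=\|u\|_{\dot{B}_{p,1}^0}.
$$
The series therefore converges absolutely in $L^p$, and its limit coincides with $u$, so $\|u\|_{L^p}\le\|u\|_{\dot{B}_{p,1}^0}$.

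\textbf{Case $\dot{B}_{p,2}^0$, $2\le p<\infty$.} We invoke the Littlewood--Paley theorem: for $1<p<\infty$,
$$
\|u\|_{L^p}\le C_p\Big\|\Big(\sum_q|\ddq u|^2\Big)^{\frac12}\Big\|_{L^p}.
$$
This follows from the Mikhlin--H\"ormander multiplier theorem applied to $\sum_q \pm\ddq$, uniformly in the signs, together with Khintchine's inequality, and is a standard result (see \cite{Dbook}). Since $p/2\ge 1$, Minkowski's inequality in $L^{p/2}$ yields
$$
\Big\|\sum_q|\ddq u|^2\Big\|_{L^{p/2}}\le\sum_q\|\ddq u\|_{L^p}^2,
$$
and taking square roots gives $\|u\|_{L^p}\le C_p\|u\|_{\dot{B}_{p,2}^0}$. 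For $p=2$ the square function step is not needed: by almost orthogonality (Plancherel), $\|u\|_{L^2}^2\le C\sum_q\|\ddq u\|_{L^2}^2$.

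\textbf{Case $\dot{B}_{p,p}^0$, $1\le p\le 2$.} The endpoint $p=1$ is the first case, and $p=2$ is the Plancherel bound above. For $1<p<2$ we again use the square function, now with the pointwise embedding $\ell^p\subset\ell^2$:
$$
\Big(\sum_q|\ddq u|^2\Big)^{\frac12}\le\Big(\sum_q|\ddq u|^p\Big)^{\frac1p}.
$$
Integrating the $p$-th power in $x$ gives
$$
\|u\|_{L^p}^p\le C_p^p\sum_q\|\ddq u\|_{L^p}^p=C_p^p\|u\|_{\dot{B}_{p,p}^0}^p.
$$
Alternatively, one may interpolate the operator $u\mapsto u$ between $\dot{B}_{1,1}^0\rightarrow L^1$ and $\dot{B}_{2,2}^0\rightarrow L^2$, using $[\dot{B}_{1,1}^0,\dot{B}_{2,2}^0]_\theta=\dot{B}_{p,p}^0$.

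The only non-elementary ingredient, and hence the main obstacle, is the square function inequality for $1<p<\infty$. We would quote it from \cite{Dbook} rather than reprove it. The remaining care points are bookkeeping: the passage from partial sums to $u$, which uses the low-frequency condition in the Besov definition, and the excluded value $p=\infty$ in the second embedding.
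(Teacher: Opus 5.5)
Your proof is correct. There is no competing argument in the paper: the proposition is quoted from \cite{Dbook}, Chapter 2, with no proof, so your write-up is a self-contained justification rather than a variant of the paper's reasoning. The citation buys brevity. Your version shows where the difficulty actually lies. The $\dot{B}^0_{p,1}$ embedding is just the triangle inequality. The $\dot{B}^0_{p,p}$ embedding for $1\le p\le 2$ can be obtained by interpolation alone. Genuine harmonic analysis, namely the Littlewood--Paley square function theorem, is needed only for $\dot{B}^0_{p,2}\hookrightarrow L^p$ with $2<p<\infty$.

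Two small bookkeeping points, neither of which is a real gap.

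\textbf{Direction of the square-function inequality.} Mikhlin--H\"ormander plus Khintchine applied to $\sum_q \pm\ddq$ gives directly $\|(\sum_q|\ddq f|^2)^{1/2}\|_{L^p}\lesssim\|f\|_{L^p}$. The reverse inequality, which is the one you use, follows by duality. Write $\langle u_N,g\rangle=\sum_{|q|\le N}\langle \ddq u,\widetilde{\Delta}_q g\rangle$ with $\widetilde{\Delta}_q=\dot{\Delta}_{q-1}+\ddq+\dot{\Delta}_{q+1}$. Then apply Cauchy--Schwarz in $q$, H\"older in $x$, and the direct bound in $L^{p'}$.

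\textbf{Passage to the limit.} Fatou's lemma needs a.e.\ convergence of the partial sums, and convergence in $\cS'$ does not provide it. Instead, apply your estimate to $u_N-u_M=\sum_{M<|q|\le N}\ddq u$. In the second and third embeddings the third Besov index is finite, so the partial sums are Cauchy in $L^p$ and therefore converge in $L^p$. The limit is $u$, because convergence in $L^p$ implies convergence in $\cS'$. Alternatively, weak compactness in $L^p$ for $1<p<\infty$, together with weak lower semicontinuity of the norm, also works. The first embedding needs neither, since there the series converges absolutely in $L^p$, including $p=\infty$.
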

\begin{lem} (see \cite{Dbook} Section 2.11, Lemma $5$ from \cite{FCestimLp} and Proposition $4$ from \cite{FCPAA})
 \sl{For any $\aa, \beta>0$ there exists a constant $C_{\aa, \beta}>0$ such that for any $u\in \dot{H}^{s-\aa}(\R^d) \cap \dot{H}^{s+\beta}(\R^d)$, then $u\in\dot{B}_{2,1}^s(\R^d)$ and:
\begin{equation}
 \|u\|_{\dot{B}_{2,1}^s} \leq C_{\aa, \beta} \|u\|_{\dot{H}^{s-\aa}}^{\frac{\beta}{\aa + \beta}} \|u\|_{\dot{H}^{s+\beta}}^{\frac{\aa}{\aa + \beta}}.
\end{equation}
 }
\label{estimBsHs}
 \end{lem}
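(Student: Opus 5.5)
The plan is to go through the Littlewood--Paley decomposition and compare dyadic pieces with the two Sobolev norms. Write
$$
\|u\|_{\dot{B}_{2,1}^s}=\sum_{j\in\Z} 2^{js}\|\ddj u\|_{L^2},
$$
and split the sum at an integer $N$ to be chosen later.

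For low frequencies $j\leq N$, we write $2^{js}\|\ddj u\|_{L^2}=2^{j\aa}\,2^{j(s-\aa)}\|\ddj u\|_{L^2}$. The second factor is bounded by $C\|u\|_{\dot{H}^{s-\aa}}$, since the $\ell^2$ (hence $\ell^\infty$) norm of $(2^{j(s-\aa)}\|\ddj u\|_{L^2})_j$ is equivalent to the $\dot{H}^{s-\aa}$ norm. Summing the geometric series $\sum_{j\leq N}2^{j\aa}$, which converges because $\aa>0$, gives a bound $C_\aa 2^{N\aa}\|u\|_{\dot{H}^{s-\aa}}$.

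For high frequencies $j>N$ we argue symmetrically. We write $2^{js}\|\ddj u\|_{L^2}=2^{-j\bb}\,2^{j(s+\bb)}\|\ddj u\|_{L^2}$, bound the second factor by $C\|u\|_{\dot{H}^{s+\bb}}$, and sum $\sum_{j>N}2^{-j\bb}$ using $\bb>0$. This gives $C_\bb 2^{-N\bb}\|u\|_{\dot{H}^{s+\bb}}$. In particular $u\in\dot{B}_{2,1}^s$; the low-frequency condition $\lim_{j\to-\infty}\|S_ju\|_{L^\infty}=0$ is inherited from membership in the homogeneous Sobolev spaces.

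It remains to optimize over $N$. We choose $N$ as the integer part of $\frac1{\aa+\bb}\log_2\big(\|u\|_{\dot{H}^{s+\bb}}/\|u\|_{\dot{H}^{s-\aa}}\big)$; the case $u=0$ is trivial. With this choice $2^{N\aa}\|u\|_{\dot{H}^{s-\aa}}$ and $2^{-N\bb}\|u\|_{\dot{H}^{s+\bb}}$ are both comparable, up to factors $2^{\aa}$ and $2^{\bb}$ coming from taking the integer part, to $\|u\|_{\dot{H}^{s-\aa}}^{\frac{\bb}{\aa+\bb}}\|u\|_{\dot{H}^{s+\bb}}^{\frac{\aa}{\aa+\bb}}$. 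This yields the stated inequality with $C_{\aa,\bb}$ depending only on $\aa,\bb$.

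No step is a real obstacle: the only care needed is the bookkeeping of the integer part in the choice of $N$, which only affects the constant.
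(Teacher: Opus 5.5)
Your argument is correct. Splitting $\sum_j 2^{js}\|\ddj u\|_{L^2}$ at a cutoff $N$ and bounding the low frequencies by $\dot{H}^{s-\aa}$ and the high frequencies by $\dot{H}^{s+\bb}$ via geometric series, then optimizing $N$, is exactly the standard argument behind this lemma. The paper gives no proof of its own; it cites sources that use this same frequency-splitting interpolation argument, so your route coincides with the intended one.
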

As in \cite{FCVSN}, we also need the following variant of the previous result (which is proven similarly):
\begin{lem}
 \sl{For any $k\geq 1$, $t\geq 0$ and any $\aa, \beta>0$ there exists a constant $C_{\aa, \beta}>0$ such that for any $u\in \tilde{L}_t^k\dot{H}^{s-\aa}(\R^d) \cap \tilde{L}_t^k\dot{H}^{s+\beta}(\R^d)$, then $u\in\tilde{L}_t^k\dot{B}_{2,1}^s(\R^d)$ and:
\begin{equation}
 \|u\|_{\tilde{L}_t^k\dot{B}_{2,1}^s} \leq C_{\aa, \beta} \|u\|_{\tilde{L}_t^k\dot{H}^{s-\aa}}^{\frac{\beta}{\aa + \beta}} \|u\|_{\tilde{L}_t^k\dot{H}^{s+\beta}}^{\frac{\aa}{\aa + \beta}}.
\end{equation}
 }
\label{estimBsHstilde}
 \end{lem}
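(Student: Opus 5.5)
The plan is to prove Lemma \ref{estimBsHstilde} exactly as Lemma \ref{estimBsHs}: split the dyadic sum at a threshold frequency and then optimise that threshold. The only change is that the time norm $L_t^k$ sits inside each dyadic block. For each $j\in\Z$ set $a_j=\|\ddj u\|_{L_t^k L^2}$, so that $\|u\|_{\tilde{L}_t^k\dot{B}_{2,1}^s}=\sum_j 2^{js}a_j$. By Definition \ref{deftilde} with $c=2$, the two quantities
$$
A=\|u\|_{\tilde{L}_t^k\dot{H}^{s-\aa}}=\Big(\sum_j 2^{2j(s-\aa)}a_j^2\Big)^{\frac12},\qquad B=\|u\|_{\tilde{L}_t^k\dot{H}^{s+\beta}}=\Big(\sum_j 2^{2j(s+\beta)}a_j^2\Big)^{\frac12}
$$
are both finite. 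Here we use the equivalence $\dot{H}^\sigma=\dot{B}_{2,2}^\sigma$, and it is understood that this equivalence is what is meant by $\tilde{L}_t^k\dot{H}^\sigma$.

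Fix $N\in\Z$ and split $\sum_j 2^{js}a_j=\sum_{j\leq N}+\sum_{j>N}$. For the low frequencies, write $2^{js}a_j=2^{j\aa}\cdot 2^{j(s-\aa)}a_j$. Cauchy--Schwarz in $j$ then gives
$$
\sum_{j\leq N}2^{js}a_j\leq \Big(\sum_{j\leq N}2^{2j\aa}\Big)^{\frac12}A\leq C_\aa 2^{N\aa}A.
$$
For the high frequencies, write $2^{js}a_j=2^{-j\beta}\cdot 2^{j(s+\beta)}a_j$, which gives similarly
$$
\sum_{j>N}2^{js}a_j\leq C_\beta 2^{-N\beta}B.
$$
Adding the two pieces, $\|u\|_{\tilde{L}_t^k\dot{B}_{2,1}^s}\leq C(2^{N\aa}A+2^{-N\beta}B)$ holds for every integer $N$. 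In particular the left-hand side is finite, so $u\in \tilde{L}_t^k\dot{B}_{2,1}^s$.

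Next, optimise in $N$. If $A=0$ or $B=0$, then all $a_j$ vanish and there is nothing to prove. Otherwise, choose $N$ as the integer part of $\frac1{\aa+\beta}\log_2(B/A)$, so that $2^{N(\aa+\beta)}$ is comparable to $B/A$ up to a factor $2^{\aa+\beta}$. Both terms are then bounded by $C_{\aa,\beta}A^{\frac{\beta}{\aa+\beta}}B^{\frac{\aa}{\aa+\beta}}$, which is the claimed inequality. The constant does not depend on $k$ or $t$, since the time integration never interacts with the summation in $j$.

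The only delicate point is this integer rounding of $N$, which costs a harmless factor. The condition $\lim_{j\to-\infty}\dot S_j u=0$ required in Definition \ref{deftilde} is inherited directly from the assumption $u\in\tilde{L}_t^k\dot{H}^{s-\aa}$. No real obstacle is expected; the argument is the standard one.
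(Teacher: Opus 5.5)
Your proof is correct, and it follows the route the paper intends. The paper gives no separate argument for this lemma; it only says it is proven like Lemma \ref{estimBsHs}, namely by splitting the dyadic sum at a threshold $N$, applying Cauchy--Schwarz to each piece, and optimizing $N$, where the $L_t^k$ norm sitting inside each dyadic block plays no role, exactly as you observed.
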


\subsection{Product laws}

\subsubsection{General product laws}

 Due to 3D-2D limits, we have to study the interactions between functions depending on $(x_1,x_2,x_3)$ or on $x_h=(x_1,x_2)$ only. Let us first recall the definition of some useful anisotropic spaces and some product laws.

For $a,b\in [1,\infty]$, the anisotropic Lebesgue space $L_{h,v}^{a,b}$ norm is defined as follows:
\begin{equation}
  \|f\|_{L_{h,v}^{a,b}} \overset{def}{=} \big\|\|f(x_h,.)_{L^b(\R_v)}\|\big\|_{L^a(\R_h^2)}.
  \label{DefLaniso}
\end{equation}
We refer to \cite{Dbook} for the classical product laws and to \cite{IG1, CDGG, CDGG2, CDGGbook, FCRF, FCVSN2} for the 2D-3D version (we also refer to \cite{FCStratif1, FCStratif2} for a 1D-3D version).
\begin{prop}
 \sl{There exists a constant $C>0$ such that for any $s,t<\frac32$ with $s+t>0$ and any $u\in \dot{H}^s(\R^3)$, $v\in \dot{H}^t(\R^3)$, then $uv\in \dot{H}^{s+t-\frac32}(\R^3)$ and we have:
 $$
 \|uv\|_{\dot{H}^{s+t-\frac32}(\R^3)} \leq C \|u\|_{\dot{H}^s(\R^3)} \|v\|_{\dot{H}^t(\R^3)}.
 $$
 }
 \label{prod3D}
\end{prop}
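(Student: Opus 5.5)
This is the classical product law in homogeneous Sobolev spaces, so I would prove it with a Littlewood--Paley paraproduct decomposition. Writing $\dot{T}$ for the homogeneous paraproduct and $\dot{R}$ for the remainder, I decompose
$$
uv=\dot{T}_u v+\dot{T}_v u+\dot{R}(u,v),
$$
and estimate each piece in $\dot{H}^{s+t-\frac32}=\dot{B}_{2,2}^{s+t-\frac32}$.

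For the paraproduct $\dot{T}_u v=\sum_j \dot{S}_{j-1}u\,\ddj v$, each block is spectrally supported in an annulus of size $2^j$. I bound $\|\dot{S}_{j-1}u\|_{L^\infty}$ with Bernstein's inequality:
$$
\|\dot{S}_{j-1}u\|_{L^\infty}\le C\sum_{k\le j-2}2^{\frac32 k}\|\dot\Delta_k u\|_{L^2}
= C\sum_{k\le j-2}2^{(\frac32-s)k}\,c_k\|u\|_{\dot{H}^s},
$$
with $(c_k)\in\ell^2$ of norm at most $1$. The condition $s<\frac32$ makes the geometric factor summable, so this is at most $C2^{(\frac32-s)j}\tilde c_j\|u\|_{\dot{H}^s}$ with $\tilde c\in\ell^2$ by Young's inequality for convolutions. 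Multiplying by $\|\ddj v\|_{L^2}=2^{-tj}c'_j\|v\|_{\dot{H}^t}$ gives a block of size $2^{-(s+t-\frac32)j}$ times a summable (indeed $\ell^1$) sequence. The almost-orthogonality of annular pieces then yields $\|\dot{T}_u v\|_{\dot{H}^{s+t-\frac32}}\le C\|u\|_{\dot{H}^s}\|v\|_{\dot{H}^t}$. The term $\dot{T}_v u$ is symmetric and uses $t<\frac32$.

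The remainder $\dot{R}(u,v)=\sum_j \ddj u\,\tilde{\dot\Delta}_j v$ is the step that needs the condition $s+t>0$, because its blocks are only supported in balls, not annuli. Each product has $L^1$ norm at most $2^{-(s+t)j}c_jc'_j\|u\|_{\dot{H}^s}\|v\|_{\dot{H}^t}$, with $(c_jc'_j)\in\ell^1$. Since the block is frequency-localized in a ball of radius $C2^j$, Bernstein's inequality gives
$$
\|\dot\Delta_q\dot{R}(u,v)\|_{L^2}\le C2^{\frac32 q}\sum_{j\ge q-N}\|\ddj u\,\tilde{\dot\Delta}_j v\|_{L^1}.
$$
Hence
$$
2^{q(s+t-\frac32)}\|\dot\Delta_q \dot{R}\|_{L^2}\le C\sum_{j\ge q-N}2^{(q-j)(s+t)}c_jc'_j\,\|u\|_{\dot{H}^s}\|v\|_{\dot{H}^t}.
$$
This sum is a convolution of an $\ell^1$ sequence with $2^{-m(s+t)}\mathbbm{1}_{m\ge -N}$, and that kernel is in $\ell^1$ exactly because $s+t>0$. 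The result lands in $\ell^1\subset\ell^2$.

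Summing the three contributions gives the claimed estimate. The main obstacle is keeping track of the low-frequency convergence conditions, namely that $s,t<\frac32$ drive the paraproducts and $s+t>0$ drives the remainder. I would also check that $uv$ belongs to $\dot{H}^{s+t-\frac32}$ in the homogeneous sense, i.e. that $\dot{S}_j(uv)\to0$. That follows from the same series converging in $\mathcal{S}'$, since $s+t-\frac32<\frac32$. I would follow the standard argument in \cite{Dbook} Chapter 2.
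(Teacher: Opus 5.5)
Your proof is correct: the Bony decomposition, with $s,t<\frac32$ controlling $\dot{S}_{j-1}u$ and $\dot{S}_{j-1}v$ in $L^\infty$, and $s+t>0$ making the remainder kernel summable after the $L^1\to L^2$ Bernstein step, is exactly the standard argument of \cite{Dbook}, Chapter 2. The paper gives no proof of its own and simply refers to \cite{Dbook} for these classical product laws, so your route coincides with the one it relies on.
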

\begin{prop}
 \sl{There exists a constant $C>0$ such that for any $s,t<1$ with $s+t>0$ and any $u\in \dot{H}^s(\R^2)$, $v\in \dot{H}^t(\R^3)$, then $uv\in \dot{H}^{s+t-1}(\R^3)$ and we have:
 $$
 \|uv\|_{\dot{H}^{s+t-1}(\R^3)} \leq C \|u\|_{\dot{H}^s(\R^2)} \|v\|_{\dot{H}^t(\R^3)}.
 $$
 }
 \label{prod2D3D}
\end{prop}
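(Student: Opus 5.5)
The statement is the 2D--3D product law for $u=u(x_h)$ and $v=v(x_h,x_3)$. I would prove it by duality in the vertical variable combined with the 2D product law applied fiberwise in frequency.

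Apply the partial Fourier transform in $x_3$ only, writing $\hat v(x_h,\xi_3)$. Since $u$ does not depend on $x_3$, we have $\widehat{uv}(x_h,\xi_3)=u(x_h)\,\hat v(x_h,\xi_3)$. The homogeneous norm on $\R^3$ has weight $|\xi|^{2\sigma}=(|\xi_h|^2+\xi_3^2)^{\sigma}$, which is not a product of horizontal and vertical weights. So I would not try to separate the variables exactly. Instead I would use a paraproduct (Bony) decomposition adapted to the 2D-function $u$.

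Decompose $uv=T_uv+T_vu+R(u,v)$ using the \emph{3D} Littlewood--Paley blocks. Note that $\ddj u$ coincides with the 2D block, since the Fourier transform of $u$ is supported on $\{\xi_3=0\}$ in the distributional sense. The key one-block estimate is the 2D--3D Bernstein/H\"older bound
$$
\|\ddj a\,\dot\Delta_{k}b\|_{L^2(\R^3)}\le \|\ddj a\|_{L^\infty(\R^2)}\|\dot\Delta_k b\|_{L^2(\R^3)}\le C2^{j}\|\ddj a\|_{L^2(\R^2)}\|\dot\Delta_k b\|_{L^2(\R^3)},
$$
valid for a 2D function $a$. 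Here the dimension gain is $2^{j\cdot 2/2}=2^{j}$ instead of $2^{3j/2}$, which is exactly why $\frac32$ is replaced by $1$. Bernstein in a single horizontal frequency shell holds because the horizontal frequencies of $\ddj u$ have size $\sim 2^j$.

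The three pieces are then treated as follows.
\begin{itemize}
\item $T_uv$: sum over $j\le k-2$ of $\|S_{k-1}u\|_{L^\infty}\|\dot\Delta_k v\|_{L^2}$. We have $\|S_{k-1}u\|_{L^\infty}\lesssim 2^{k(1-s)}c_k\|u\|_{\dot H^s}$ because $s<1$. This gives a contribution in $\dot H^{s+t-1}$.
\item $T_vu$: here we need $\|S_{j-1}v\,\ddj u\|_{L^2(\R^3)}$, and the roles are reversed: $v$ must sit in $L^\infty_v L^2_h$-type norms. I would instead bound $\|\ddj u\, S_{j-1}v\|_{L^2}\le \|\ddj u\|_{L^2_h}\|S_{j-1}v\|_{L^{\infty}_hL^2_v}$. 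Anisotropic Bernstein gives $\|S_{j-1}v\|_{L^\infty_hL^2_v}\lesssim \sum_{k<j}2^{k}\|\dot\Delta_kv\|_{L^2}\lesssim 2^{j(1-t)}c_j\|v\|_{\dot H^t}$, using $t<1$.
\item $R(u,v)$: sum over $k\ge j-N$, and use $s+t>0$ for convergence of the series. We bound each term in $L^2$ by $2^{k}\|\dot\Delta_k u\|\|\tilde\Delta_k v\|$ and then sum the tail $\sum_{k\ge j}2^{k(1-s-t)}c_k\,2^{j(s+t-1)}$.
\end{itemize}

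The main obstacle is the remainder term. In the $\dot H$ scale, $R(u,v)$ lands a priori in $\dot B^{s+t-1}_{2,1}$ only after exploiting the $\ell^1$ summation, with the low-frequency output controlled by $\sum_{k\ge j}2^{(j-k)(\cdots)}$. One must check the output exponent carefully. The frequency localization of the output in 3D is only $|\xi|\lesssim 2^k$, so the plain $L^2$ bound suffices for the $\ell^2$ sum with weight $2^{j(s+t-1)}$, and the requirement is exactly $s+t>0$. An alternative route, if the dyadic bookkeeping gets messy, is the paper's references: this is the classical 2D--3D product law of \cite{CDGGbook, FCRF}, and one may invoke it.
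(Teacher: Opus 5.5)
Your estimates for $T_uv$ and $T_vu$ are correct. The 2D Bernstein bound $\|\ddj u\|_{L^\infty(\R^2)}\lesssim 2^j\|\ddj u\|_{L^2(\R^2)}$ with $s<1$ handles the first. The anisotropic bound $\|\dot\Delta_k v\|_{L^\infty_hL^2_v}\lesssim 2^k\|\dot\Delta_k v\|_{L^2(\R^3)}$ with $t<1$ handles the second. Both paraproducts are localized in annuli, so these two pieces land in $\dot H^{s+t-1}(\R^3)$.

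The gap is in the remainder, which is exactly where $s+t>0$ has to be used. You put the Bernstein gain on the \emph{input} block $\dot\Delta_k u$, which costs $2^k$. Your bound then reads
\[
2^{j(s+t-1)}\|\ddj R(u,v)\|_{L^2}\lesssim \sum_{k\geq j-N} 2^{(k-j)(1-s-t)}\,c_k\,\|u\|_{\dot H^s}\|v\|_{\dot H^t},
\]
and the weights $2^{(k-j)(1-s-t)}$ do not decay in $k$ unless $s+t>1$. So this argument only covers $1<s+t<2$. For $0<s+t\leq 1$ the series is not controlled, and your claim that the plain $L^2$ bound leads exactly to the condition $s+t>0$ is false.

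The missing idea is to measure the product in a norm that is $L^1$ in the horizontal variable, and to let Bernstein act on the \emph{output} block, whose frequencies are $\lesssim 2^j\ll 2^k$. By Cauchy--Schwarz in $x_h$,
\[
\|\dot\Delta_k u\,\widetilde{\Delta}_k v\|_{L^1_hL^2_v}\leq \|\dot\Delta_k u\|_{L^2(\R^2)}\|\widetilde{\Delta}_k v\|_{L^2(\R^3)}.
\]
Since $\ddj=\ddj\,\chi(2^{-j}D_h)$ for a suitable horizontal cutoff $\chi$, Young's inequality in $x_h$ followed by Minkowski gives
\[
\|\ddj f\|_{L^2(\R^3)}\lesssim 2^j\|f\|_{L^2_vL^1_h}\leq 2^j\|f\|_{L^1_hL^2_v}.
\]
Hence
\[
2^{j(s+t-1)}\|\ddj R(u,v)\|_{L^2}\lesssim \sum_{k\geq j-N} 2^{(j-k)(s+t)}\,2^{ks}\|\dot\Delta_k u\|_{L^2(\R^2)}\,2^{kt}\|\widetilde{\Delta}_k v\|_{L^2(\R^3)}.
\]
The right-hand side lies in $\ell^1\subset\ell^2$ exactly when $s+t>0$. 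This is the 2D--3D analogue of the embedding $\dot B^{s+t}_{1,1}\hookrightarrow \dot B^{s+t-\frac32}_{2,1}$ used for the classical law of Proposition \ref{prod3D}.

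With this change your paraproduct argument is complete. The paper gives no proof of this proposition and only cites references, which is where this standard paraproduct argument is found. Your closing fallback of simply invoking those references is not itself a proof.
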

\begin{rem}
 \sl{When applying these propositions to estimate a product of the form $u\cdot \n v$ with $\div u=0$, the second condition on the regularity exponents can be relaxed into $s+t>-\frac32$ for the classical law and into $s+t>-1$ for the 2D-3D law.}
 \label{ProdRq}
\end{rem}

\subsubsection{Estimates for the external force terms}

We recall here the estimates that we obtained in \cite{FCVSN2} for the external force terms in System \eqref{SystWe}:
$$
\bP (\tb\cdot \n \ce +\ce\cdot \n \tb +\ce \cdot \n \ce)
$$
\begin{prop} (\cite{FCVSN2}, Proposition $6.5$)
 \sl{Assume $\delta\in]0,\frac14]$, $\tu_0,\tb_0\in H^\delta$ and $\coe\in H^{\frac12+\delta}$ (inhomogeneous spaces). Let $(\tu,\tb)$ and $\ce$ respectively denote the corresponding unique global strong solutions of Systems \eqref{MHD2D} and \eqref{Mag}. There exists a constant $C>0$ (depending on $\delta,\nu,\nu',\|\tu_0\|_{H^\delta},\|\tb_0\|_{H^\delta}$) such that for any $(\sigma,\sigma')\in[0,\frac12+\delta]\times [0,\delta]$:
 \begin{equation}
  \begin{cases}
  \vspace{0.1cm}
   \|\ce\cdot \n \ce\|_{L^1 \dot{H}^{\sigma}} \leq C \|\coe\|_{H^{\frac12+\delta}}^2,\\
   \|\tb\cdot \n \ce\|_{L^\frac43 \dot{H}^{\sigma'}} +\|\ce\cdot \n \tb\|_{L^\frac43 \dot{H}^{\sigma'}} \leq C \|\coe\|_{H^{\frac12+\delta}}.
  \end{cases}
\label{EstimFext}
 \end{equation}
 }
\label{PropFext}
\end{prop}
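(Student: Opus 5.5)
The plan is to treat the two lines of \eqref{EstimFext} separately. In both cases I would combine a product estimate in space with Hölder's inequality in time, feeding in the time-integrability of $\ce$ from Theorem \ref{Thc} and of $(\tu,\tb)$ from Proposition \ref{Propestimtutb} (used with $\sigma_+=\delta$). The key preliminary observation is this. Since $\coe\in H^{\frac12+\delta}$ (inhomogeneous) and $\delta\leq\frac14$, we have $\coe\in\dot{H}^a$ for every $a\in[0,\frac12+\delta]\subset]-1,1[$. Theorem \ref{Thc} then gives, with a constant depending only on $\nu,\nu',\|\tuo\|_{L^2},\|\tbo\|_{L^2}$,
$$
\|\ce\|_{L^\infty\dot{H}^a}+\|\ce\|_{L^2\dot{H}^{a+1}}\leq C\|\coe\|_{H^{\frac12+\delta}} .
$$
By interpolation this yields $\ce\in L^p\dot{H}^{a+\frac2p}$ for all $p\in[2,\infty]$.

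\emph{The quadratic term.} Since $\div\ce=0$, I would write $\ce\cdot\n\ce=\div(\ce\otimes\ce)$. For $\sigma+1>0$ I would use the tame product estimate $\|fg\|_{\dot{H}^{\sigma+1}}\lesssim \|f\|_{L^\infty}\|g\|_{\dot{H}^{\sigma+1}}+\|g\|_{L^\infty}\|f\|_{\dot{H}^{\sigma+1}}$. This avoids Proposition \ref{prod3D}, whose constraint $s,t<\frac32$ fails at the endpoint $\sigma=\frac12+\delta$. It gives
$$
\|\ce\cdot\n\ce\|_{L^1\dot{H}^\sigma}\lesssim \|\ce\|_{L^2L^\infty}\|\ce\|_{L^2\dot{H}^{\sigma+1}} .
$$
The second factor is bounded by the preliminary observation, since $\sigma\in[0,\frac12+\delta]$. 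For the first factor I would use $\dot{B}^{\frac32}_{2,1}\hookrightarrow L^\infty$ together with Lemma \ref{estimBsHs} (with $\aa=\bb=\delta$), which gives $\|\ce\|_{L^2L^\infty}\lesssim\|\ce\|_{L^2\dot{H}^{\frac32-\delta}}^{\frac12}\|\ce\|_{L^2\dot{H}^{\frac32+\delta}}^{\frac12}\lesssim\|\coe\|_{H^{\frac12+\delta}}$. Together these yield the first bound of \eqref{EstimFext}.

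\emph{The mixed terms.} For $\tb\cdot\n\ce$ I would apply Proposition \ref{prod2D3D} with $\tb\in\dot{H}^{s}(\R^2)$ and $\n\ce\in\dot{H}^{t}(\R^3)$, where $s+t-1=\sigma'$. In time I would split $\frac34=\frac14+\frac12$. Proposition \ref{Propestimtutb} gives $\tb\in L^4\dot{H}^{s}$ exactly for $s\in[\frac12,\frac12+\delta]$, so I would take $s=\frac12+\delta$. Then $t=\sigma'+\frac12-\delta\in[\frac12-\delta,\frac12]$, so $s,t<1$ and $s+t>0$. Moreover $\n\ce\in L^2\dot{H}^t$ holds by the preliminary observation with $a=t$.

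For $\ce\cdot\n\tb$, I would use $\n\tb\in L^q\dot{H}^{s}(\R^2)$, i.e. $\tb\in L^q\dot{H}^{1+s}$ from Proposition \ref{Propestimtutb}, paired with $\ce\in L^p\dot{H}^{t}(\R^3)$, where $\frac1p+\frac1q=\frac34$ and $s+t-1=\sigma'$. The constraints are $t<1$ and $s<1$ from Proposition \ref{prod2D3D}, together with the integrability windows of Proposition \ref{Propestimtutb}, which only allow $1+s\leq 1+\delta$ when $q=2$.

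\emph{Main obstacle.} I expect this last choice of exponents to be the delicate step. The naive choices ($q=2$, $p=4$) force $t=1+\sigma'-s\geq1$, which violates the $2$D--$3D$ law at the endpoint. My first attempt would be to take $q$ slightly larger than $2$ and $1+s$ slightly below $1+\delta$, so that $p<4$ and hence $t=a+\frac2p$ can be taken strictly below $1$ while still summing to $\sigma'+1$. If the window is too tight at $\sigma'=\delta$, I would instead write $\ce\cdot\n\tb=\div(\ce\otimes\tb)-\tb\,\div\ce=\div(\ce\otimes\tb)$ and estimate $\|\ce\otimes\tb\|_{\dot{H}^{\sigma'+1}}$ by splitting the derivative, via a paraproduct decomposition, as in the previous case: half-derivatives go on $\ce$ (which is smooth up to $\dot{H}^{\frac32+\delta}$ in $L^2$), and $\tb$ is kept in $L^4\dot{H}^{\frac12+\delta}$. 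This reduces everything to the same admissible exponent ranges and gives the second bound of \eqref{EstimFext}.
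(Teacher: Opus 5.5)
\textbf{The gap is the endpoint $\sigma'=\delta$ for $\ce\cdot\n\tb$.} Your preliminary bounds are correct. So is the estimate of $\ce\cdot\n\ce$ (tame product estimate plus $\|\ce\|_{L^2\dot{B}^{\frac32}_{2,1}}$ via Lemma \ref{estimBsHs}), and so is the estimate of $\tb\cdot\n\ce$ (Proposition \ref{prod2D3D} with $\tb\in L^4\dot{H}^{\frac12+\delta}$). For $\ce\cdot\n\tb$ with $\sigma'<\delta$, the naive choice $q=2$, $p=4$, $1+s\in]1+\sigma',1+\delta]$ already works. The endpoint $\sigma'=\delta$ is exactly the case used later, e.g.\ the $\tilde{L}^{\frac43}\dot{H}^{\delta}$ norm in \eqref{EstimStriIsoW}, and neither of your two fallbacks closes it.

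Your first fallback cannot work. Proposition \ref{Propestimtutb} (with $\sigma_+=\delta$) caps the regularity of $\tb$ at $1+\delta$, so $s\leq\delta$, and even $s<\delta$ once $q>2$. The spatial relation $s+t=1+\sigma'$ then forces $t\geq 1$ at $\sigma'=\delta$, whatever the time exponents are: changing $p$ only changes $a=t-\frac2p$, not $t$.

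Your second fallback cannot work either. In the paraproduct decomposition of $\ce\otimes\tb$ (equivalently of $\ce^h\cdot\n_h\tb$), the piece $\sum_j\dot{S}_{j-1}\ce\,\dot\Delta_j\tb$ has its high frequencies carried by $\tb$. Since $\dot{S}_{j-1}\ce\to\ce$, the low-frequency factor cannot absorb any derivative. This piece is therefore controlled at best by $\|\tb\|_{\dot{H}^{1+\sigma'}(\R^2)}\sup_j\|\dot{S}_{j-1}\ce\|_{L_{h,v}^{\infty,2}}$. So $\tb$ cannot be kept in $L^4\dot{H}^{\frac12+\delta}$. It must be taken in $L^2\dot{H}^{1+\delta}$, the only admissible time exponent at that regularity. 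That leaves $\ce$ in $L^4$ with exactly the endpoint regularity $t=1$, which Proposition \ref{prod2D3D} excludes. Writing the term in divergence form changes nothing here.

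\textbf{The missing idea is an endpoint version of Proposition \ref{prod2D3D}} with a Besov norm on the 3D factor. For $s\in[0,1[$,
\[
\|u\,v\|_{\dot{H}^{s}(\R^3)}\leq C\|u\|_{\dot{H}^{s}(\R^2)}\|v\|_{\dot{B}^{1}_{2,1}(\R^3)}.
\]
This follows from the usual paraproduct argument:
\begin{itemize}
\item for $T_u v$, use $\|\dot{S}_{j-1}u\|_{L^\infty(\R^2)}\lesssim 2^{j(1-s)}\|u\|_{\dot{H}^s}$;
\item for $T_v u$, use the horizontal Bernstein inequality $\|\dot\Delta_k v\|_{L_{h,v}^{\infty,2}}\lesssim 2^k\|\dot\Delta_k v\|_{L^2}$, so that $\sup_j\|\dot{S}_{j-1}v\|_{L_{h,v}^{\infty,2}}\lesssim\|v\|_{\dot{B}^1_{2,1}}$;
\item for the remainder, use $\|\dot\Delta_q f\|_{L^2}\lesssim 2^q\|f\|_{L_{h,v}^{1,2}}$, which only needs $s+1>0$.
\end{itemize}
Apply it with $u=\n_h\tb\in L^2\dot{H}^{\sigma'}(\R^2)$, using Proposition \ref{Propestimtutb} at $\sigma=1+\sigma'\leq1+\delta$, where the time exponent $2$ is admissible. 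Take $v=\ce^h$ and use
\[
\|\ce\|_{L^4\dot{B}^{1}_{2,1}}\lesssim\|\ce\|_{L^4\dot{H}^{1-\delta}}^{\frac12}\|\ce\|_{L^4\dot{H}^{1+\delta}}^{\frac12},
\]
which comes from Lemma \ref{estimBsHs} and Cauchy--Schwarz in time. Both factors are controlled by your preliminary observation with $p=4$ and $a=\frac12\mp\delta$. This gives $\|\ce\cdot\n\tb\|_{L^{\frac43}\dot{H}^{\sigma'}}\leq C\|\coe\|_{H^{\frac12+\delta}}$ uniformly for $\sigma'\in[0,\delta]$, endpoint included.
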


\subsection{Strichartz estimates}

Consider the following system (where the external force terms $\Fe,\Ge$ depend on $(t,x)$ and have different regularities):
\begin{equation}
 \begin{cases}
  \d_t f -\nu \D f +\frac{1}{\ee}\mathbb{P} (f\wedge e_3)= \Fe +\Ge,\\
  {f}_{|t=0}=\foe.
 \end{cases}
\label{LRF}
\end{equation}
We refer to \cite{FCVSN2} for general versions and will only state the needed variants adapted to System \eqref{SystWe}.

\subsubsection{Isotropic estimates}

\begin{prop} (\cite{FCVSN2} Proposition 6.6)
 \sl{For any $d\in \R$, $r\geq2$, $q\geq 1$, $\theta\in[0,1]$, and $p\in[2, \frac2{\theta(1-\frac2{r})}[$, there exists two constants $C_{p,\theta,r}, C_{p,\theta,r,k}>0$ such that for any divergence-free vectorfield $\foe$, $\Fe$ and $\Ge$, the solution $f$ of \eqref{LRF} satisfies:
 \begin{equation}
  \||D|^d f\|_{\tilde{L}^p\dot{B}_{r, q}^0} \leq \frac{\ee^{\frac{\theta}{2}(1-\frac2{r})}}{\nu^{\frac{1}{p}-\frac{\theta}{2}(1-\frac2{r})}}  \left(C_{p,\theta,r}\Big(\|\foe\|_{\dot{B}_{2, q}^{\sigma_1}} +\|\Fe\|_{\tilde{L}^1 \dot{B}_{2, q}^{\sigma_1}}\Big) +\frac{C_{p,\theta,r,k}}{\nu^{1-\frac1{k}}} \|\Ge\|_{\tilde{L}^k \dot{B}_{2, q}^{\sigma_1+\frac2{k}-2}} \right).
\label{EstimStriIsoUnif}
\end{equation}
where $\sigma_1= d+\frac32-\frac3{r}-\frac2{p}+\theta(1-\frac2{r})$.
\label{EstimStri}
}
\end{prop}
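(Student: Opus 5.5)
The plan is to work frequency by frequency, combining a dispersive estimate for the rotating propagator with heat smoothing, and then sum over frequencies in $\ell^q$, which is the structure of the Chemin--Lerner norm $\tilde{L}^p\dot{B}^0_{r,q}$. Since $\foe$, $\Fe$ and $\Ge$ are divergence free, the Fourier transform diagonalizes the linear operator of \eqref{LRF} on divergence-free fields. It then becomes the heat factor $e^{-\nu t|\xi|^2}$ times the two oscillating groups $e^{\pm i\frac{t}{\ee}\frac{\xi_3}{|\xi|}}$, composed with zero-order projectors $\mathcal{P}_\pm(\xi)$. I would apply $\ddj$ and treat each piece $\ddj f$ separately, writing it by Duhamel as the free evolution of $\ddj\foe$ plus time integrals of the propagator applied to $\ddj\Fe$ and $\ddj\Ge$.

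\textbf{Step 1 (dispersion).} For $g$ localized in the annulus $|\xi|\sim 2^j$, I would prove
$$\|e^{\pm i\frac{t}{\ee}\frac{D_3}{|D|}}g\|_{L^\infty}\leq C\left(\frac{\ee}{t}\right)^{\frac12}2^{3j}\|g\|_{L^1}.$$
By scaling it is enough to take $j=0$. The kernel is then $\int e^{ix\cdot\xi\pm i\frac{t}{\ee}\frac{\xi_3}{|\xi|}}\chi(\xi)\,d\xi$, and I would bound it by a one-dimensional stationary phase in the angular variable. The phase $\xi_3/|\xi|$ is only $0$-homogeneous and its Hessian is degenerate, so only a $t^{-1/2}$ rate is available; this is exactly what produces the factor $\ee^{\frac12}$. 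I expect this uniform kernel bound to be the main obstacle. I would first try the argument of Chemin--Desjardins--Gallagher--Grenier \cite{CDGG}: pass to cylindrical coordinates, integrate in the angle around the $\xi_3$-axis, and control the resulting Bessel-type integral uniformly in $x$, using that $\d_{\xi_3}(\xi_3/|\xi|)=|\xi_h|^2/|\xi|^3$ is bounded below away from the set where $\xi_h$ is small.

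\textbf{Step 2 (Strichartz at fixed frequency).} Interpolating this $L^1\to L^\infty$ bound with the trivial $L^2$ conservation gives decay $(\ee/t)^{\frac12(1-\frac2r)}$ from $L^{r'}$ to $L^r$. Feeding this into the $TT^*$ argument together with Hardy--Littlewood--Sobolev in time yields the $L^p_tL^r$ bound with $\ee^{\frac12(1-\frac2r)\cdot\frac{?}{}}$ scaling at the endpoint. To reach a general $\theta\in[0,1]$, I would interpolate this endpoint with the Bernstein estimate $\|\ddj g\|_{L^r}\lesssim 2^{3j(\frac12-\frac1r)}\|\ddj g\|_{L^2}$ (the case $\theta=0$). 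This gives the factor $\ee^{\frac{\theta}{2}(1-\frac2r)}$ and the condition $p<\frac{2}{\theta(1-\frac2r)}$.

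\textbf{Step 3 (heat smoothing and time exponent).} The heat factor $e^{-\nu t2^{2j}}$ is used to reach the time exponent $p$. Because $p<\frac{2}{\theta(1-\frac2r)}$, the time-decay weight $t^{-\frac{\theta}{2}(1-\frac2r)}$ lies in $L^{p}$ locally. Combined with the exponential decay, this gives a factor $(\nu2^{2j})^{-\frac1p+\frac{\theta}{2}(1-\frac2r)}$. Collecting powers of $2^j$, including the $2^{jd}$ coming from $|D|^d$, produces the index $\sigma_1$ and the power $\nu^{-(\frac1p-\frac\theta2(1-\frac2r))}$.

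\textbf{Step 4 (forcing terms and summation).} The $\Fe$ term in $L^1_t$ follows from Minkowski's inequality, exactly like the initial-data term. For $\Ge\in L^k_t$, I would write the Duhamel integral as a time convolution with the frequency-localized kernel and apply Young's inequality in time against $e^{-\nu t2^{2j}}\in L^{k'}$. Since $\|e^{-\nu t 2^{2j}}\|_{L^{k'}}\sim(\nu 2^{2j})^{-(1-\frac1k)}$, this costs exactly $\nu^{-(1-\frac1k)}2^{j(\frac2k-2)}$, hence the regularity index $\sigma_1+\frac2k-2$. Finally, I would take the $\ell^q$ norm over $j$ of $2^{j\cdot}\|\ddj\cdot\|_{L^p_tL^r}$, which is by definition the $\tilde{L}^p\dot{B}^0_{r,q}$ norm and gives \eqref{EstimStriIsoUnif}.
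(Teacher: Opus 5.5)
\noindent The paper does not prove this proposition; it quotes it from \cite{FCVSN2}. Its exponents nevertheless determine which dispersive estimate is needed, and the one you propose is too weak. Your overall architecture (diagonalization on divergence-free fields, dyadic blocks, dispersion, $TT^*$ with the heat factor, Duhamel, $\ell^q$ summation) is the standard one, so run the $TT^*$ step with explicit exponents. Suppose a block satisfies an $L^1\to L^\infty$ bound $C2^{3j}\min(1,\ee/|t|)^{\alpha}$. Interpolation with $L^2$ and $\min(1,x)\le x^{\theta'}$ then bound the $TT^*$ kernel from $L^{r'}$ to $L^r$ by $C2^{3j(1-\frac2r)}(\ee/|t-s|)^{\gamma}e^{-c\nu 2^{2j}|t-s|}$, with $\gamma=\alpha\theta'(1-\frac2r)$. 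Young's inequality in time needs this kernel in $L^{\frac p2}$, i.e.\ $p<\frac2\gamma$, and gives
\[
\|\ddj f\|_{L^pL^r}\lesssim \ee^{\frac{\gamma}{2}}\,\nu^{\frac{\gamma}{2}-\frac1p}\,2^{j(\frac32-\frac3r-\frac2p+\gamma)}\|\ddj \foe\|_{L^2}.
\]
Matching \eqref{EstimStriIsoUnif} forces $\gamma=\theta(1-\frac2r)$ with $\theta$ up to $1$, hence $\alpha=1$. The missing ingredient is therefore the sharp estimate, uniform on the whole annulus, $\|e^{\pm i\frac{t}{\ee}\frac{D_3}{|D|}}\ddj g\|_{L^\infty}\le C2^{3j}\min(1,\frac{\ee}{|t|})\|g\|_{L^1}$ (Koh--Lee--Takada). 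With your $t^{-1/2}$ bound, the ``?'' in your Step 2 is $\frac12$, so the endpoint gain is only $\ee^{\frac14(1-\frac2r)}$. You then obtain the statement only for $\theta\le\frac12$ (taking $\theta'=2\theta$). Compare Proposition \ref{EstimStrianiso}: its exponents $\ee^{\frac\theta4(1-\frac2m)}$ and $p<\frac4{\theta(1-\frac2m)}$ are exactly the $t^{-1/2}$ pattern, and the isotropic statement doubles them. The loss matters, because the paper uses $\theta=3\delta$ up to $\frac34$, e.g.\ $(d,p,r,\theta)=(1,2,3,3\delta)$ with $\delta<\frac14$. Your justification that only $t^{-1/2}$ is available is also wrong. $0$-homogeneity only puts the radial direction in the kernel of the Hessian, and the Hessian of $\xi_3/|\xi|$ has rank at least two everywhere on the annulus.

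Two related points. First, the argument of \cite{CDGG} you sketch relies on the lower bound for $\d_{\xi_3}(\xi_3/|\xi|)=|\xi_h|^2/|\xi|^3$. It only works on $\{|\xi_h|\ge c|\xi|\}$, with constants blowing up as $c\to0$, whereas the statement involves the full $\dot{B}^{\sigma_1}_{2,q}$ norm. Near the vertical axis the decay has to come instead from the horizontal Hessian $\d^2_{\xi_h}(\xi_3/|\xi|)\approx-\xi_3|\xi|^{-3}\,\mathrm{Id}$. Second, in Step 3 you integrate $t^{-\frac\theta2(1-\frac2r)}$ in $L^p_t$ as if $\|\ddj f(t)\|_{L^r}$ decayed pointwise in time for $L^2$ data, which it does not. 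The condition $p<\frac2{\theta(1-\frac2r)}$ really comes from the $L^{\frac p2}$-integrability of the $TT^*$ kernel above, which again presupposes $t^{-1}$ decay. The same caveat applies to Step 4. There the factor $(\nu2^{2j})^{-(1-\frac1k)}$ must be extracted inside the duality/$TT^*$ argument, as $\|\mathbf{1}_{s<t}e^{-c\nu2^{2j}(t-s)}\|_{L^{k'}_s}$ (using $k\le p$ and Minkowski). A pointwise-in-time Young inequality would discard the dispersion.
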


\subsubsection{Anisotropic estimates}
These versions involve anisotropic Lebesgue spaces recalled previously.
\begin{prop}
 \sl{For any $d\in \R$, $m>2$, $\theta\in]0,1]$, $p\in[2, \frac4{\theta(1-\frac2{m})}[$ there exists two constants $C_{p,\theta,m}, C_{p,\theta,m,k}>0$ such that for any divergence-free vectorfields $\foe$, $\Fe$ and $\Ge$, the solution $f$ of \eqref{LRF} satisfies:
 \begin{equation}
  \||D|^d f\|_{L^p L_{h,v}^{m,2}} \leq \frac{\ee^{\frac{\theta}4(1-\frac2{m})}}{\nu^{\frac{1}{p}-\frac{\theta}4(1-\frac2{m})}} \left(C_{p,\theta,m}\Big(\|\foe\|_{\dot{B}_{2, 1}^{\sigma_2}} +\|\Fe\|_{\tilde{L}^1 \dot{B}_{2, 1}^{\sigma_2}}\Big) +\frac{C_{p,\theta,m,k}}{\nu^{1-\frac1{k}}} \|\Ge\|_{\tilde{L}_t^k \dot{B}_{2, 1}^{\sigma_2+\frac2{k}-2}} \right).
  \label{EstimStriAnisoUnif}
\end{equation}
where $\sigma_2= d+1-\frac2{m}-\frac2{p}+\frac{\theta}2 (1-\frac2{m})$.
}
 \label{EstimStrianiso}
\end{prop}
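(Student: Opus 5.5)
The plan is to reduce everything to a dyadic frequency block and a frozen time. Then use a dispersive estimate for the rotating operator in $L_{h,v}^{\infty,2}$, a $TT^*$ argument in time, and the heat factor, and finally sum over blocks. The third Besov index has to be $1$ because, unlike isotropic Lebesgue spaces, $L_{h,v}^{m,2}$ has no square-function characterisation available here. So we sum $\||D|^d\dot\Delta_j f\|_{L^pL_{h,v}^{m,2}}$ over $j$ by the triangle inequality.

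\emph{Linear structure.} On divergence-free fields, the operator $\frac1\ee\mathbb{P}(\cdot\wedge e_3)$ is diagonalised in Fourier variables with eigenvalues $\pm i\frac{\xi_3}{\ee|\xi|}$ (plus a zero mode, which is absent for divergence-free data). By Duhamel,
\[
f(t)=\sum_\pm e^{\nu t\Delta}e^{\pm it\frac{D_3}{\ee|D|}}P_\pm \foe+\int_0^t\sum_\pm e^{\nu (t-t')\Delta}e^{\pm i(t-t')\frac{D_3}{\ee|D|}}P_\pm(\Fe+\Ge)(t')\,dt',
\]
with $P_\pm$ bounded Fourier multipliers.

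\emph{Block estimate.} Fix a block $\dot\Delta_j$ and split it further with a cut-off $\chi(|\xi_h|/(r|\xi|))$ for a parameter $r\in]0,1]$.
\begin{itemize}
\item On $\{|\xi_h|\le r|\xi|\}$ there is no dispersion. There we only use the energy estimate, anisotropic Bernstein $\|g\|_{L_{h,v}^{m,2}}\lesssim (r2^{j})^{1-\frac2m}\|g\|_{L^2}$ (using that the horizontal support has size $r2^j$), and the heat factor.
\item On the complement, stationary phase in the horizontal variables gives $\|e^{\pm it\frac{D_3}{\ee|D|}}g\|_{L_{h,v}^{\infty,2}}\lesssim C_r\,(\ee/t)^{\frac12}2^{2j}\|g\|_{L_{h,v}^{1,2}}$. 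This is the Chemin--Desjardins--Gallagher--Grenier anisotropic dispersion: the $x_3$ direction is only measured in $L^2$.
\end{itemize}
A $TT^*$ argument with the heat weight $e^{-c\nu t2^{2j}}$, interpolated against the trivial $L^2$ bound, gives
\[
\|e^{\nu t\Delta}e^{\pm it\frac{D_3}{\ee|D|}}\dot\Delta_jg\|_{L^pL_{h,v}^{m,2}}\lesssim \ee^{\frac14(1-\frac2m)}\,\nu^{-\frac1p+\frac14(1-\frac2m)}\,2^{j(1-\frac2m-\frac2p+\frac12(1-\frac2m))}\|\dot\Delta_j g\|_{L^2},
\]
for $p<\frac4{1-\frac2m}$. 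The factor $\frac14$ comes from the $t^{-1/2}$ decay, halved by $TT^*$.

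\emph{Introducing $\theta$.} We interpolate (with weight $\theta$) between the pure energy/Bernstein bound and the dispersive one. Equivalently, we optimise $r$ against the $r$-dependent constants. This produces the factor $\ee^{\frac\theta4(1-\frac2m)}$, the regularity $\sigma_2=d+1-\frac2m-\frac2p+\frac\theta2(1-\frac2m)$ after multiplying by $2^{jd}$, and the range $p<\frac4{\theta(1-\frac2m)}$.

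\emph{Force terms.}
\begin{itemize}
\item For $\Fe\in\tilde L^1\dot B^{\sigma_2}_{2,1}$ we apply Minkowski in $t'$ to the Duhamel integral, blockwise, which reduces to the free estimate.
\item For $\Ge\in\tilde L^k$ we keep the heat factor $e^{-c\nu(t-t')2^{2j}}$ and apply Young/Hölder in time. This costs $(\nu2^{2j})^{-(1-\frac1k)}$, hence the shift $\sigma_2+\frac2k-2$ and the factor $\nu^{-(1-\frac1k)}$. The Chemin--Lerner norm $\tilde L^k_t$ is exactly what makes this blockwise computation legitimate before summation in $j$.
\end{itemize}
Summing in $\ell^1(\mathbb{Z})$ concludes.

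\emph{Main obstacle.} The hard step is the anisotropic dispersive estimate combined with the $r$-truncation. We need uniform control of the horizontal phase's Hessian away from $\{\xi_h=0\}$, and we must choose $r$ as a power of $\ee$ so that the low-$|\xi_h|$ loss and the $C_r$ growth balance exactly to $\theta$. I would take this estimate from \cite{CDGG, FCRF} and redo only the exponent bookkeeping.
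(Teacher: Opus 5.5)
\textbf{The step that fails is your anisotropic dispersive bound on $\{|\xi_h|\geq r|\xi|\}$.} The norm $L_{h,v}^{m,2}$ measures $x_3$ only in $L^2$, so the only usable dispersion is horizontal: Plancherel in $x_3$, then stationary phase in $\xi_h$ at fixed $\xi_3$. In radial/angular coordinates, the $\xi_h$-Hessian of $\xi_3/|\xi|$ is $\frac{\xi_3}{|\xi|^3}\mathrm{diag}\big(\frac{2|\xi_h|^2-\xi_3^2}{|\xi|^2},-1\big)$.

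This Hessian is nondegenerate near the axis $\xi_h=0$, which is exactly where you put ``no dispersion''. It vanishes identically on the plane $\xi_3=0$, where the symbol itself vanishes (the two-dimensional Taylor--Proudman modes), and that plane lies inside your ``dispersive'' region.

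Concretely, take $g=(\nabla_h^\perp\phi(x_h)\,b(x_3),0)$ with $\hat\phi$ supported in $\{1\leq|\xi_h|\leq 2\}$ and $\hat b$ supported in $[-\eta,\eta]$. The propagator moves $g$ by at most $t\eta/\varepsilon$ in $L^2$. By horizontal Bernstein, $\|e^{\pm it\frac{D_3}{\varepsilon|D|}}g\|_{L_{h,v}^{\infty,2}}\geq\frac12\|g\|_{L_{h,v}^{\infty,2}}\simeq\|g\|_{L_{h,v}^{1,2}}$ for $t\lesssim\varepsilon/\eta$. This contradicts your bound $C_r(\varepsilon/t)^{1/2}\|g\|_{L_{h,v}^{1,2}}$ as $\eta\to 0$.

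Moving the cut-off to the slab $\{|\xi_3|\leq r|\xi|\}$ does not rescue the scheme. That slab has full horizontal extent, and with $L^2$ in $x_3$ anisotropic Bernstein gives no power of $r$ there. In the isotropic $L^r(\mathbb{R}^3)$ estimate the slab's small volume does give such a factor, which is why the same scheme works in that case.

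In fact the obstruction is to the inequality itself, not only to your route. Take $\nu=1$, $\Fe=\Ge=0$, $d=0$, and the $g$ above with $\eta=\varepsilon$ and $\|b\|_{L^2}=1$ as initial datum $f_0$. Then $\|f(t)-f_0\|_{L^2}\leq Ct\|f_0\|_{L^2}$. Hence $\|f(t)\|_{L_{h,v}^{m,2}}\geq\frac{\kappa}{2}\|f_0\|_{L^2}$ on $[0,t_0]$, with $\kappa=\|\nabla_h^\perp\phi\|_{L^m}/\|\nabla_h^\perp\phi\|_{L^2}$ and $t_0>0$ both independent of $\varepsilon$. Meanwhile the right-hand side of the statement is $O\big(\varepsilon^{\frac{\theta}{4}(1-\frac2m)}\|f_0\|_{L^2}\big)$.

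So no exponent bookkeeping can produce this bound uniformly over all divergence-free data. Horizontal stationary phase only controls the part of $f$ with $|\xi_3|\geq r|\xi|$, with an $r$-dependent constant. The slab near $\xi_3=0$ needs a separate, data-dependent smallness argument.

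The paper gives no proof of this proposition; it imports it from the author's earlier work. So there is no internal argument to compare with that would close this gap.
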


%\textbf{Aknowledgments:} The author would like to thank the anonymus referees for useful comments and remarks.

\textbf{Aknowledgements :} This work is supported by the ANR project Smash: ANR-25-CE40-4532.

\end{document}